\newcommand{\showcomments}{yes}
\newsavebox{\commentbox}
\newcounter{intronum}
\newcounter{ax}
\newtheorem{thm}{Theorem}[section]
\newtheorem{lem}[thm]{Lemma}
\newtheorem{cor}[thm]{Corollary}
\newtheorem{prop}[thm]{Proposition}
\newtheorem{thmi}{Theorem}
\newtheorem{cori}[thmi]{Corollary}
\theoremstyle{definition}
\newtheorem*{remnon}{Remark}
\newtheorem{defn}[thm]{Definition}
\newtheorem{rem}[thm]{Remark}
\newtheorem{claim}{Claim}
\newtheorem{claim*}{Claim}
\newtheorem{questioni}[intronum]{Question}
\DeclareMathOperator{\dimension}{dim}
\DeclareMathOperator{\kernel}{ker}
\DeclareMathOperator{\Aut}{Aut}
\DeclareMathOperator{\stabilizer}{Stab}
\DeclareMathOperator{\diam}{diam}
\newcommand{\neb}{\mathcal N}
\newcommand{\symmetric}{\mathrm{Sym}}
\newcommand{\field}[1]{\mathbb{#1}}
\newcommand{\integers}{\ensuremath{\field{Z}}}
\newcommand{\naturals}{\ensuremath{\field{N}}}
\newcommand{\Euclidean}{\ensuremath{\field{E}}}
\newcommand{\Rmnum}[1]{\mathbf{{\expandafter\@slowromancap\romannumeral #1@}}}
\newcommand{\simp}{\ensuremath{\partial_{_{\triangle}}}}
\DeclareMathOperator{\Isom}{Isom}
\let\oldmarginpar\marginpar
\renewcommand\marginpar[1]{\-\oldmarginpar[\raggedleft\footnotesize #1]
{\raggedright\footnotesize #1}}
\newcounter{enumitemp}
\newcommand{\dist}{\textup{\textsf{d}}}
\newcommand{\cuco}[1]{\mathcal{#1}}
\newcommand{\parint}{\hookrightarrow_{_\parallel}}
\newcommand{\gate}{\mathfrak g}
\newcommand{\orth}[1]{\ensuremath{{#1}^\perp}}
\title{On hierarchical hyperbolicity of cubical groups}
\author[M.F. Hagen]{Mark F Hagen}
\address{DPMMS, University of Cambridge, Cambridge, UK}
\curraddr{School of Mathematics, University of Bristol, Bristol, UK}
\email{markfhagen@gmail.com}
\thanks{\flushleft {Hagen was supported by the Engineering and Physical Sciences Research Council grant of Henry Wilton.}}
\author[T. Susse]{Tim Susse}
\address{Mathematics Department, Bard College at Simon's Rock, Great Barrington, 
Massachusetts, USA}
\email{tisusse@gmail.com}
\thanks{Susse was partially supported by National Science Foundation grant DMS-1313559.}
\date{\today}
\begin{document}
\maketitle

\begin{abstract}
	Let $\cuco X$ be a proper CAT(0) cube complex admitting a proper cocompact 
	action by a group $G$.  We give three conditions on the action, 
any one of which ensures that $\cuco X$ has a \emph{factor system} in the 
	sense of~\cite{BHS:HHS_I}. We also prove that one of these conditions 
is necessary.  This combines with~\cite{BHS:HHS_I} to show that $G$ is 
a \emph{hierarchically hyperbolic group}; this partially answers questions 
raised in~\cite{BHS:HHS_I,BHS:HHS_II}.  Under any of these conditions, our 
results also affirm a conjecture of Behrstock-Hagen on boundaries of cube 
complexes, which implies that $\cuco X$ cannot contain a convex 
\emph{staircase}.  The necessary conditions on the action are all strictly 
weaker than virtual 	cospecialness, and we are not aware of a cocompactly 
cubulated group that does not satisfy at least one of the conditions.
\end{abstract}

\section*{Introduction}\label{sec:introduction}
Much work in geometric group theory revolves around
generalizations of Gromov hyperbolicity: relatively hyperbolic
groups, weakly hyperbolic groups, acylindrically hyperbolic groups,
coarse median spaces, semihyperbolicity, lacunary hyperbolicity,
etc. Much attention has been paid to groups acting
properly and cocompactly on CAT(0) cube complexes, which also
have features reminiscent of hyperbolicity. Such complexes give a
combinatorially and geometrically rich framework to build on, and
many groups have been shown to admit such actions (for a small
sample, see \cite{Sageev:Construction, Wise:small_can,
	OllivierWise:Random, BergeronWise:3mflds, HagenWise:freebyZ}).

Many results follow from studying the geometry of CAT(0) cube
complexes, often using strong properties reminiscent of negative
curvature. For instance, several authors have studied the structure
of quasiflats and Euclidean sectors in cube complexes, with
applications to rigidity properties of right-angled Artin group
\cite{Xie:tits, BKS:quasiflats, Huang:quasiflats}.  These spaces
have also been shown to be median \cite{Chepoi:median} and to have
only semi-simple isometries \cite{Haglund:semisimple}. Further,
under reasonable assumptions, a CAT(0) cube complex $\cuco X$ either
splits as a nontrivial product or $\Isom(\cuco X)$  must contain a
rank-one element~\cite{CapraceSageev:rank}.  Once a given group is known to act properly and cocompactly on a
CAT(0) cube complex the geometry of the cube complex controls the
geometry and algebra of the group. For instance, such a group is
biautomatic and cannot have Kazhdan's property (T)
\cite{NibloReeves:biautomatic, NibloReeves:PropT}, and it must
satisfy a Tits alternative \cite{SageevWise:Tits}

Here, we examine cube complexes admitting
proper, cocompact group actions from the point of view of certain
convex subcomplexes.  Specifically, given a CAT(0) cube complex $\cuco X$, 
we study the following set $\mathfrak F$ of convex subcomplexes: $\mathfrak F$ 
is the smallest set of subcomplexes that contains $\cuco X$, contains each 
combinatorial hyperplane, and is closed under cubical closest-point projection, 
i.e. if $A,B\in\mathfrak F$, then $\gate_B(A)\in\mathfrak F$, where 
$\gate_B:\cuco X\to B$ is the cubical closest point projection.

\subsection*{Main results}
The collection $\mathfrak F$ of subcomplexes is of 
interest for several reasons.  It was first considered in~\cite{BHS:HHS_I}, in 
the context of finding \emph{hierarchically hyperbolic structures} on $\cuco X$. 
Specifically, in~\cite{BHS:HHS_I}, it is shown that if there exists $N<\infty$ 
so that each point of $\cuco X$ is contained in at most $N$ elements of 
$\mathfrak F$, then $\cuco X$ is a \emph{hierarchically hyperbolic space}, which 
has numerous useful consequences outlined below; the same finite multiplicity 
property of $\mathfrak F$ has other useful consequences outlined below.  When 
this finite multiplicity condition holds, we say, following~\cite{BHS:HHS_I}, 
that $\mathfrak F$ is a \emph{factor system} for $\cuco X$.

We believe that if $\cuco X$ is proper and some group $G$ acts properly and 
cocompactly by isometries on $\cuco X$, then the above finite multiplicity 
property holds, and thus $G$ is a hierarchically hyperbolic group.  
In~\cite{BHS:HHS_I}, it is shown that this holds when $G$ has a finite-index 
subgroup acting cospecially on $\cuco X$, and it is also verified in a few 
non-cospecial examples.  

This 
conjecture has proved surprisingly resistant to attack; we earlier believed we 
had a proof.  However, a subtlety 
in Proposition~\ref{prop:closed_uncer_complementation} means that at 
present our techniques only give a complete proof under various conditions on 
the $G$--action, namely:

\begin{thmi}\label{thmi:main}
	Let $G$ act properly and cocompactly on the proper CAT(0) cube complex $\cuco 
	X$.  Then $\mathfrak F$ is a factor system for $\cuco X$ provided 
	\emph{\textbf{any one}} of the following conditions is satisfied (up to passing 
	to a finite-index subgroup of $G$):
	\begin{itemize}
		\item the action of $G$ on $\cuco X$ is \emph{rotational};
		\item the action of $G$ on $\cuco X$ satisfies the \emph{weak finite height 
			condition for hyperplanes};
		\item the action of $G$ on $\cuco X$ satisfies the \emph{essential index 
			condition} and the \emph{Noetherian intersection of conjugates condition 
			(NICC)} on hyperplane-stabilisers.
	\end{itemize}
	Hence, under any of the above conditions, $\cuco X$ is a hierarchically 
	hyperbolic space and $G$ a hierarchically hyperbolic group.
	
	Conversely, if $\mathfrak F$ is a factor system, then the $G$--action satisfies 
	the essential index condition and the NICC.
\end{thmi}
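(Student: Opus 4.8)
The plan is to reduce the factor-system condition, by a single mechanism in all three cases, to a pair of uniform finiteness bounds on $\mathfrak F$, to verify those bounds under each hypothesis, and to invoke~\cite{BHS:HHS_I} for the hierarchically hyperbolic conclusions. First I record the structure of $\mathfrak F$: for convex subcomplexes $A,B\subseteq\cuco X$ one has $\hyperplanes(\gate_B(A))=\hyperplanes(A)\cap\hyperplanes(B)$ -- the inclusion $\subseteq$ holds because $\gate_B(A)\subseteq B$ while, by the parallelism between $\gate_B(A)$ and $\gate_A(B)\subseteq A$, it crosses only hyperplanes that cross $A$, and the reverse inclusion because gating a point of $A$ onto $B$ keeps it on its side of any hyperplane crossing $B$ -- and a hyperplane crosses a combinatorial hyperplane $\hat{\mathfrak h}$ exactly when it crosses $\mathfrak h$. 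Inducting on the construction of $\mathfrak F$, every $F\in\mathfrak F\setminus\{\cuco X\}$ therefore satisfies $\hyperplanes(F)=\bigcap_{\mathfrak h\in\mathcal S(F)}\{\mathfrak k:\mathfrak k\text{ crosses }\mathfrak h\}$ for a finite set $\mathcal S(F)$ of hyperplanes. Since a convex subcomplex through a vertex $x$ is recovered from $x$ by intersecting the halfspaces at $x$ bounded by the hyperplanes not crossing it, the elements of $\mathfrak F$ through $x$ inject into the collection of their crossing sets, partially ordered by inclusion; bounding the multiplicity of $\mathfrak F$ thus reduces to bounding, uniformly in $x$, (i) the lengths of chains of such sets and (ii) the number of covers of any one of them.

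Bound (ii) is what the \emph{essential index condition} provides, by bounding the number of $\stabilizer_G(\mathfrak h)$-orbits of combinatorial hyperplanes inside the carrier of $\mathfrak h$; cocompactness of the $G$-action turns this into a finite bound at each point. Bound (i) is what the \emph{NICC} provides: after passing to a finite-index subgroup so that hyperplane-stabilisers are non-inverting, the stabiliser of $F\in\mathfrak F$ through $x$ contains the intersection over $\mathfrak h\in\mathcal S(F)$ of the non-inverting stabilisers of $\hat{\mathfrak h}$, a finite intersection of $G$-conjugates of hyperplane-stabilisers, and these subgroups descend along a strictly descending chain of such $F$. The descending chain condition of the NICC then bounds the chain -- provided the descent is \emph{strict}, which is exactly the content of Proposition~\ref{prop:closed_uncer_complementation}: the step at which a gate $\gate_{\hat{\mathfrak k}}(\hat{\mathfrak h})$ must be recognised, inside $\hat{\mathfrak k}$, as behaving like a combinatorial hyperplane. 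A uniform bound on (i) together with one on (ii) now yields the factor system, proving the third bullet; for the first two bullets it suffices to derive the essential index condition and the NICC -- equivalently, Proposition~\ref{prop:closed_uncer_complementation} -- from the remaining hypotheses.

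Under the \emph{rotational} hypothesis each hyperplane carrier has a $\stabilizer_G(\mathfrak h)$-invariant product decomposition in which the complementation operation underlying Proposition~\ref{prop:closed_uncer_complementation} is manifestly internal to $\mathfrak F$; this also bounds the orbit counts of (ii) and makes the associated subgroup chains terminate. Under the \emph{weak height condition for hyperplanes}, only boundedly many $G$-conjugates of a hyperplane-stabiliser have an infinite common intersection, so a descending chain of finite intersections of such conjugates can decrease only boundedly often while remaining infinite, after which the uniform bound on orders of finite subgroups (from properness and cocompactness) forces it to terminate; this gives (i), and (ii) is checked directly. In the third case (i) and (ii) are the hypotheses themselves, with Proposition~\ref{prop:closed_uncer_complementation} as the bridge between them and the counting argument. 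I expect this Proposition -- the assertion that iterated gates onto combinatorial hyperplanes are controlled enough for the subcomplex-to-subgroup dictionary to be faithful -- to be the main obstacle; it is precisely the subtlety that, absent any of the three hypotheses, leaves the general conjecture open.

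For the converse, the easier direction, assume $\mathfrak F$ has multiplicity at most $N$. Since $G$ acts cocompactly there are finitely many $G$-orbits of vertices, and each element of $\mathfrak F$ contains a vertex, so $\mathfrak F$ has only finitely many $G$-orbits; restricting to elements contained in the carrier of a fixed hyperplane and using cocompactness gives the essential index condition. For the NICC, an infinite strictly descending chain of finite intersections of $G$-conjugates of hyperplane-stabilisers is eventually a chain of infinite subgroups and, via the same subcomplex-to-subgroup correspondence read in reverse -- with properness supplying a vertex common to all the resulting subcomplexes -- would produce infinitely many distinct elements of $\mathfrak F$ through one point, contradicting finite multiplicity. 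Hence $\mathfrak F$ is a factor system if and only if the $G$-action satisfies the essential index condition and the NICC; the rotational hypothesis and the weak height condition are each sufficient for this, so, combined with~\cite{BHS:HHS_I}, all three conditions give that $\cuco X$ is a hierarchically hyperbolic space and $G$ a hierarchically hyperbolic group.
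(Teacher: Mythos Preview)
Your overall strategy---bound the poset of elements of $\mathfrak F$ through a vertex by bounding chain lengths and cover counts---is plausible in outline, but the execution has concrete gaps and several of the hypotheses are misidentified.

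First, you have misstated the weak height condition: you write ``only boundedly many $G$-conjugates of a hyperplane-stabiliser have an infinite common intersection'', but that is \emph{finite height}, which is strictly stronger (see Remark~\ref{rem:finite_height}). The actual weak height condition only says that among infinitely many conjugates with all finite intersections infinite, two of them give the same intersection $K\cap K^{g_i}=K\cap K^{g_j}$; it does not bound how many distinct such intersections occur, and so does not immediately terminate a descending chain of intersections. Similarly, you say the essential index condition ``bounds the number of $\stabilizer_G(\mathfrak h)$-orbits of combinatorial hyperplanes inside the carrier of $\mathfrak h$'', but it says nothing of the sort: it bounds $[\stabilizer_G(\widehat F):\stabilizer_G(F)]$ for $F\in\mathfrak F$, which is a statement about essential cores, not about hyperplanes in carriers. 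Your bound~(ii) on covers is therefore not supplied by the stated hypothesis.

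Second, the heart of your bound~(i) is the claim that a strict chain $F_0\supsetneq F_1\supsetneq\cdots$ in $\mathfrak F$ yields a chain of stabilisers that NICC forces to terminate. But NICC only gives eventual \emph{commensurability} of the intersection-of-conjugates subgroups, and commensurable stabilisers do not force equality of the corresponding subcomplexes: by Lemma~\ref{lem:comm_stab} they force parallel \emph{essential cores}, and a strict inclusion $F_i\supsetneq F_{i+1}$ can persist through inessential hyperplanes. You gesture at Proposition~\ref{prop:closed_uncer_complementation} as the bridge, but that proposition says $\mathfrak F$ is closed under taking \emph{orthogonal complements}---it has nothing to do with ``gates behaving like combinatorial hyperplanes''. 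The paper's actual mechanism is quite different: it uses Proposition~\ref{prop:closed_uncer_complementation} to show that an infinite ascending chain $(F_i)$ is accompanied by a strictly \emph{descending} chain $(\orth F_i)$, that the union $U$ and intersection $I$ both lie in $\mathfrak F$, and then derives a contradiction from the characterisation of $\mathfrak F$ as orthogonal complements of \emph{compact} subcomplexes (Theorem~\ref{thm:factor_system_compact_orthocomp}). The rotational case is handled by a separate, more direct argument (Corollary~\ref{cor:rotation_FS}) that bounds translates through a point without passing through chains at all. Your proposal does not engage with the orthogonal-complement machinery, which is where all of the real work lies.
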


The auxiliary conditions are as 
follows.  The action of $G$ is \emph{rotational} if, whenever $A,B$ are hyperplanes of 
$\cuco X$, and $g\in\stabilizer_G(B)$ has the property that $A$ and $gA$ cross 
or osculate, then $A$ lies at distance at most $1$ from $B$.  This condition is 
\emph{prima facie} weaker than requiring that the 
action of $G$ on $\cuco X$ be cospecial, so Theorem~\ref{thmi:main} generalises 
the results in~\cite{BHS:HHS_I}.  (In fact, the condition above is slightly 
stronger than needed; compare Definition~\ref{defn:forgetful}.)

A subgroup $K\leq G$ satisfies the \emph{weak finite height} condition if the 
following holds.  Let $\{g_i\}_{i\in I}\subset G$ be an infinite set so that 
$K\cap\bigcap_{i\in J}K^{g_i}$ is infinite for all finite $J\subset I$.  Then 
there exist distinct $g_i,g_j$ so that $K\cap K^{g_i}=K\cap K^{g_j}$.  The 
action of $G$ on $\cuco X$ satisfies the weak finite height condition for hyperplanes 
if each hyperplane stabiliser satisfies the weak finite height condition.  

This holds, 
for example, when each hyperplane stabiliser has \emph{finite height} in the 
sense of~\cite{GMRS}.  Hence Theorem~\ref{thmi:main} implies that $\mathfrak F$ 
is a factor system when $\cuco X$ is hyperbolic, without invoking virtual specialness \cite{Agol:virtualhaken} because quasiconvex subgroups 
(in particular hyperplane stabilisers) have finite height~\cite{GMRS}; the 
existence of a hierarchically hyperbolic structure relative to $\mathfrak F$ 
also follows from recent results of Spriano in the hyperbolic 
case~\cite{Shaggy92}. Also, 
if $\mathfrak F$ is a factor system and $\cuco X$ does not decompose as a 
product of unbounded CAT(0) cube complexes, then results of~\cite{BHS:HHS_I} 
imply that $G$ is \emph{acylindrically hyperbolic}.  On the other hand, recent 
work of Genevois~\cite{Genevois:height} uses finite height of 
hyperplane-stabilisers to verify acylindrical hyperbolicity for certain groups 
acting on CAT(0) cube complexes.  In our opinion, this provides some 
justification for the naturality of the weak finite height condition for hyperplanes.  

The NIC condition for hyperplanes asks the following for each 
hyperplane-stabiliser $K$. Given any 
$\{g_i\}_{i\ge0}$ so that 
$K_n=K\cap\bigcap_{i=0}^nK^{g_i}$ is infinite for all $n$, there exists $\ell$ 
so that $K_n$ and $K_\ell$ are commensurable for $n\ge\ell$.  Note that $\ell$ 
is allowed to depend on $\{g_i\}_{i\ge0}$.  The accompanying essential index 
condition asks that there exists a constant $\zeta$ so that for any $F\in\mathfrak F$ , the stabiliser of $F$ has index at most $\zeta$ in the 
stabiliser of the \emph{essential core} of $F$, defined 
in~\cite{CapraceSageev:rank}.  These conditions are somewhat less natural than 
the preceding conditions, but they follow fairly easily from the finite 
multiplicity of $\mathfrak F$.

We prove Theorem~\ref{thmi:main} in Section~\ref{sec:proof_of_main_theorem}.  There is a unified argument under the weak finite 
height and NICC hypotheses, and a somewhat simpler argument in the presence of 
a rotational action.

To prove Theorem~\ref{thmi:main}, the main issue is to derive a contradiction 
from the existence of an infinite strictly ascending chain $\{F_i\}$, in 
$\mathfrak F$, using that the corresponding chain of orthogonal complements 
must strictly descend.  The existence of such chains can be deduced from the 
failure of the finite multiplicity of $\mathfrak F$ using only the proper 
cocompact group action; it is in deriving a contradiction from the existence of 
such chains that the other conditions arise.

\textbf{Any condition} that allows one to conclude that the $F_i$ 
have bounded-diameter fundamental domains for the actions of their stabilisers yields the desired conclusion.  So, there 
are most likely other versions of Theorem~\ref{thmi:main} using different 
auxiliary 
hypotheses. We are not aware of a cocompactly cubulated group which is not covered 
by Theorem~\ref{thmi:main}.

\subsection*{Hierarchical hyperbolicity}
\emph{Hierarchically hyperbolic
	spaces/groups} (HHS/G's), introduced in~\cite{BHS:HHS_I,BHS:HHS_II}, were proposed as a common framework for studying mapping
class groups and (certain) cubical groups. Knowledge that a group is
hierarchically hyperbolic has strong consequences that imply 
many of the nice properties of mapping class groups. 

Theorem~\ref{thmi:main} and results of~\cite{BHS:HHS_I} (see Remark 13.2 of 
that 
paper) together answer Question 8.13 
of~\cite{BHS:HHS_I} and part of Question~A of~\cite{BHS:HHS_II} --- which ask 
whether a proper cocompact CAT(0) cube complex has a factor system --- under 
any of the three auxiliary hypotheses in Theorem~\ref{thmi:main}.  Hence our 
results expand the class of cubical groups that are known to be 
hierarchically hyperbolic.  Some consequences of this are as follows, where 
$\cuco X$ is a CAT(0) cube complex on which $G$ acts geometrically, satisfying 
any of the hypotheses in Theorem~\ref{thmi:main}:

\begin{itemize}
	\item In combination with~\cite[Corollary 14.5]{BHS:HHS_I}, 
	Theorem~\ref{thmi:main} shows that $G$ acts acylindrically on the 
contact graph 
	of $\cuco X$, i.e. the intersection graph of the hyperplane carriers, which is 
	a quasi-tree~\cite{Hagen:contact}.
	\item Theorem~\ref{thmi:main} combines with Theorem~9.1 
of~\cite{BHS:HHS_I} to 
	provide a Masur-Minsky style distance estimate in $G$: up to quasi-isometry, 
	the distance in $\cuco X$ from $x$ to $gx$, where $g\in G$, is given by summing 
	the distances between the projections of $x,gx$ to a collection of uniform 
	quasi-trees associated to the elements of the factor system.
	\item Theorem~\ref{thmi:main} combines with Corollary~9.24 
of~\cite{DHS:HHS_IV} 
	to prove that either $G$ stabilizes a convex subcomplex of $\cuco X$ splitting 
	as the product of unbounded subcomplexes, or $G$ contains an element acting 
	loxodromically on the contact graph of $\cuco X$.  This is a new proof of a 
	special case  of the Caprace-Sageev rank-rigidity 
	theorem~\cite{CapraceSageev:rank}.
\end{itemize}

Proposition~11.4 of~\cite{BHS:HHS_I} combines with Theorem~\ref{thmi:main} to 
prove:

\begin{thmi}\label{thmi:quasi_trees}
	Let $G$ act properly and cocompactly on the proper CAT(0) cube complex $\cuco 
	X$, with the action satisfying the hypotheses of Theorem~\ref{thmi:main}.  Let 
	$\mathfrak F$ be the factor system, and suppose that for all 
	subcomplexes $A\in\mathfrak F$ and $g\in G$, the subcomplex $gA$ is not 
	parallel 
	to a subcomplex in $\mathcal F$ which is in the orthogonal complement of $A$.  
	Then $\cuco X$ quasi-isometrically embeds in the product of finitely many trees.
\end{thmi}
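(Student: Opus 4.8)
The plan is to feed the hierarchically hyperbolic structure supplied by Theorem~\ref{thmi:main} into the quasi-tree criterion of Proposition~11.4 of~\cite{BHS:HHS_I}. By Theorem~\ref{thmi:main}, under the standing hypotheses $\mathfrak F$ is a factor system, so by~\cite{BHS:HHS_I} the complex $\cuco X$ is a hierarchically hyperbolic space whose index set $\mathfrak S$ consists of the parallelism classes of elements of $\mathfrak F$ (including $[\cuco X]$): nesting is ``parallel into''; two classes $[A]$ and $[B]$ are orthogonal exactly when $B$ is parallel to a subcomplex of $\mathfrak F$ contained in $\orth A$; and the hyperbolic space $\mathcal C W$ attached to $W=[A]$ is (a cone-off of) the contact graph of $A$, hence a quasi-tree by~\cite{Hagen:contact}. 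Since $G$ acts cocompactly and $\mathfrak F$ is a factor system, $\mathfrak F$, and hence its $G$--equivariant quotient $\mathfrak S$, has finitely many $G$--orbits; consequently the quasi-tree constants of the $\mathcal C W$, and the constants in the distance formula of~\cite[Theorem~9.1]{BHS:HHS_I} (which estimates $d_{\cuco X}(x,y)$, up to uniform multiplicative and additive error, by a sum over $W\in\mathfrak S$ of the thresholded $\mathcal C W$--distances between the projections of $x$ and $y$ to $\mathcal C W$), are uniform.

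Next I would unpack the mechanism behind Proposition~11.4. Colour each $W\in\mathfrak S$ by its $G$--orbit; by the previous paragraph there are finitely many colours. The hypothesis of the theorem says precisely that no two distinct classes in a common $G$--orbit of $\mathfrak S$ are orthogonal: if $W=[A]\in\mathfrak S$ and $g\in G$ with $[gA]\neq[A]$ and $[A]\perp[gA]$, then by the description of orthogonality above $gA$ would be parallel to a subcomplex of $\mathfrak F$ contained in $\orth A$, which is excluded. Fix a colour and let $\mathcal W$ be the corresponding $G$--orbit in $\mathfrak S$. The hierarchically hyperbolic projections among the domains in $\mathcal W$ can be organised into a Bestvina--Bromberg--Fujiwara projection complex in the sense of~\cite{BBF:quasi_tree}, which assembles the spaces $\{\mathcal C W: W\in\mathcal W\}$ into a quasi-tree of spaces $\mathcal Y_{\mathcal W}$; because no two elements of $\mathcal W$ are orthogonal, the distance estimate for $\mathcal Y_{\mathcal W}$ is comparable to the sum of the thresholded $\mathcal C W$--distances over $W\in\mathcal W$ (there are no ``rectangle'' terms), and since each $\mathcal C W$ is a quasi-tree, so is $\mathcal Y_{\mathcal W}$. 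Ranging over the finitely many colours $\mathcal W_1,\dots,\mathcal W_k$ and comparing with the distance formula shows that the diagonal map $x\mapsto(\text{projections of }x\text{ to the }\mathcal Y_{\mathcal W_j})$ is a quasi-isometric embedding of $\cuco X$ into $\prod_{j=1}^k\mathcal Y_{\mathcal W_j}$. Finally, each $\mathcal Y_{\mathcal W_j}$, being a quasi-tree, is quasi-isometric to a tree, so $\cuco X$ quasi-isometrically embeds into a product of finitely many trees.

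The crux of the argument, and the step I expect to require the most care, is the orbit-colouring step and the reason a hypothesis like the stated non-parallelism condition is genuinely needed rather than merely finite-dimensionality of $\cuco X$: finite-dimensionality bounds the size of pairwise-orthogonal families in $\mathfrak S$ but not the chromatic number of the orthogonality graph, whereas the hypothesis prevents orthogonality from recurring along a $G$--orbit and so guarantees that colouring by $G$--orbit produces orthogonality-free colour classes. Once that is in place, the remaining ingredients --- verifying the projection-complex axioms for each colour class, checking that the absence of orthogonality makes $\mathcal Y_{\mathcal W}$ a quasi-tree whose distance estimate matches the restriction of the distance formula, and splicing the pieces together --- are exactly what is carried out in Proposition~11.4 of~\cite{BHS:HHS_I}; combining that proposition with Theorem~\ref{thmi:main} yields the statement.
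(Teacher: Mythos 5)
Your argument is correct and follows essentially the same route as the paper, which proves Theorem~\ref{thmi:quasi_trees} by directly combining Theorem~\ref{thmi:main} with Proposition~11.4 of~\cite{BHS:HHS_I}. What you have done is unpack the mechanism inside that proposition (the orbit-colouring, BBF projection complexes, quasi-tree of quasi-trees, and comparison with the distance formula), and your identification of the non-parallelism hypothesis as precisely what makes colouring by $G$--orbit produce orthogonality-free colour classes is exactly the role it plays in that citation.
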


The set $\mathfrak F$ is shown in Section~\ref{sec:decomposing} to have a
graded structure: the lowest-grade elements are combinatorial
hyperplanes, then we add projections of combinatorial hyperplanes to
combinatorial hyperplanes, etc. This allows for several arguments to
proceed by induction on the grade. Essentially by definition, a
combinatorial hyperplane $H$ is the \emph{orthogonal complement} of
a $1$--cube $e$, i.e. a maximal convex subcomplex $H$ for which
$\cuco X$ contains the product $e\times H$ as a subcomplex.  We
show, in Theorem~\ref{thm:factor_system_compact_orthocomp}, that $\mathfrak F$ is precisely the set of convex subcomplexes $F$ such
that there exists a compact, convex subcomplex $C$ so that the
orthogonal complement of $C$ is $F$.  This observation plays an important role.

Relatedly, we give conditions in Proposition~\ref{prop:closed_uncer_complementation} ensuring that $\mathfrak F$ is closed under the operation of taking orthogonal 
complements.  As well as being used in the proof of Theorem~\ref{thmi:main}, 
this is needed for applications of recent results about hierarchically 
hyperbolic spaces to cube complexes.  Specifically, in~\cite{ABD}, 
Abbott-Behrstock-Durham introduce hierarchically hyperbolic 
spaces with \emph{clean containers}, and work under that (quite natural) 
hypothesis.  Among its applications, they produce largest, universal 
acylindrical actions on hyperbolic spaces for hierarchically hyperbolic groups.  
We will not give the definition of clean containers for general hierarchically 
hyperbolic structures, but for the CAT(0) cubical case, our results imply that 
it holds for hierarchically hyperbolic structures on $\cuco X$ obtained using 
$\mathfrak F$, as follows:

\begin{thmi}[Clean containers]\label{thmi:clean_containers}
	Let $\cuco X$ be a proper CAT(0) cube complex on which the group $G$ acts 
	properly and cocompactly, and suppose $\mathfrak F$ is a factor system.  Let $F\in\mathfrak F$, and let $V\in\mathfrak F$ be 
	properly contained in $F$.  Then there exists $U\in\mathfrak F$, unique 
up to parallelism, such 	that:
	\begin{itemize}
		\item $U\subset F$;
		\item $V\hookrightarrow F$ extends to a convex embedding $V\times 
		U\hookrightarrow F$;
		\item if $W\in\mathfrak F$, and the above two conditions hold with $U$ 
		replaced by $W$, then $W$ is parallel to a subcomplex of $U$.
	\end{itemize}
\end{thmi}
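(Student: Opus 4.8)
The plan is to take $U$ to be the \emph{orthogonal complement of $V$ inside $F$}. Fixing a vertex $v\in V$, let $U$ be the maximal convex subcomplex of $F$ through $v$ for which $V\hookrightarrow F$ extends to a convex embedding $V\times U\hookrightarrow F$; by the standard theory of orthogonal complements (cf.\ Section~\ref{sec:decomposing}) such a maximal $U$ exists and is well-defined up to parallelism, its hyperplanes being (essentially) those $h\in\mathcal H(F)$ which cross every hyperplane crossing $V$ and do not osculate $V$. This single choice already yields almost all of the assertion: $U\subset F$ and $V\times U\hookrightarrow F$ are immediate, uniqueness up to parallelism is part of the orthogonal-complement package, and the universal property of $U$ gives the third bullet in the strengthened form that \textbf{any} convex subcomplex $W\subseteq F$ with $V\times W\hookrightarrow F$ convex --- in particular any such $W$ lying in $\mathfrak F$ --- is parallel to a subcomplex of $U$. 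Hence the whole content of the theorem is the single assertion $U\in\mathfrak F$.

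To prove $U\in\mathfrak F$ I would first reduce to $F=\cuco X$. The group $\stabilizer_G(F)$ acts properly and cocompactly on $F$, and the smallest collection $\mathfrak F(F)$ of convex subcomplexes of $F$ containing $F$, containing the combinatorial hyperplanes of $F$, and closed under gate projection is contained in $\{A\in\mathfrak F:A\subseteq F\}$: every combinatorial hyperplane of $F$ is the gate-projection into $F$ of a combinatorial hyperplane of $\cuco X$, and for a target inside $F$ the gate projection within $F$ coincides with the one in $\cuco X$. So $\mathfrak F(F)$ inherits finite multiplicity and is a factor system for $F$; replacing $\cuco X$ by $F$ and $\mathfrak F$ by $\mathfrak F(F)$, it is enough to treat the case $F=\cuco X$, where $U=\orth V$.

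With $F=\cuco X$, the strategy is to realize $U$ as the orthogonal complement of a \emph{compact} convex subcomplex and then apply Theorem~\ref{thm:factor_system_compact_orthocomp}. Since $V\in\mathfrak F$, that theorem produces a compact convex $C$ with $V=\orth C$, so $U=\orth V=\orth{(\orth C)}$. The point to establish is that this double orthogonal complement is already cut out by finitely many crossing constraints: one wants a compact convex $D$ --- for instance the convex hull of a well-chosen finite collection of $1$--cubes based at $v$ and lying in $V$ --- such that a hyperplane crosses every hyperplane crossing $D$ exactly when it crosses every hyperplane crossing $V$; granting this, $\orth D=\orth V=U$ up to parallelism, modulo the osculation and basepoint bookkeeping that also enters the definition of the orthogonal complement, and then $U\in\mathfrak F$ by Theorem~\ref{thm:factor_system_compact_orthocomp}. (If the hypotheses of Proposition~\ref{prop:closed_uncer_complementation} are available in the present setting, there is a shortcut: it gives $\orth V\in\mathfrak F$ directly, whence $U=\gate_F(\orth V)\in\mathfrak F$ since $\mathfrak F$ contains $F$ and is closed under gate projection.)

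The main obstacle is exactly the finiteness claim of the previous paragraph. The hyperplane set $\mathcal H(V)$ is infinite, so ``crosses every hyperplane crossing $V$'' is an infinite conjunction, and one must show that only finitely many of these constraints are independent. This is where properness and cocompactness of the $G$--action are used, together with the product structure $C\times\orth C$ inside $\cuco X$: a hyperplane crossing $V$ but lying far from $v$ is, through that product structure, parallel to one near $v$ imposing the same constraint, and cocompactness bounds how much genuinely new information can accrue as one moves away from $v$, so that a finite $D$ suffices. Making this rigorous (or, equivalently, isolating the portion of Proposition~\ref{prop:closed_uncer_complementation} that applies here) is the heart of the argument; the reduction of the second paragraph, the hyperplane description of $U$, and the parallelism bookkeeping are routine.
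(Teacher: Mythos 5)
Your choice of $U$ and your reduction to the single claim ``$\orth V\in\mathfrak F$'' both match the paper's argument exactly. The gap is in the last two paragraphs: you treat the applicability of Proposition~\ref{prop:closed_uncer_complementation} as uncertain (``If the hypotheses \dots are available \dots'') and then set out to rederive a piece of it, stopping short of a complete argument for why finitely many crossing constraints suffice. But the third of the three alternative hypotheses listed in Proposition~\ref{prop:closed_uncer_complementation} is precisely ``$\mathfrak F$ is a factor system'', which is assumed verbatim in the statement of the theorem. So the proposition applies \emph{directly}, with no isolation of a sub-argument required; it immediately gives $\orth V\in\mathfrak F$, and then $U=\orth V\cap F=\gate_F(\orth V)\in\mathfrak F$ by closure under gate projections. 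That is the paper's entire proof, modulo the parallelism bookkeeping you already dismiss as routine.

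Two smaller remarks. First, your reduction to $F=\cuco X$ via $\mathfrak F(F)$ is sound but unnecessary once you invoke the proposition: the paper works in $\cuco X$ throughout and just intersects $\orth V$ with $F$ at the end, using convexity of $F$ and of $V\times\orth V$ to see $(V\times\orth V)\cap F=V\times(\orth V\cap F)$. Second, the ``heart of the argument'' you identify in the last paragraph really is the hard technical content of the paper, but it was already discharged in Section~\ref{subsec:closed_under_complementation}; the factor-system hypothesis makes it the \emph{easiest} of the three cases there (the descending chain $Z_m\supsetneq Z_{m+1}$ contradicts finite multiplicity immediately). So the theorem is, by design, a near-formal consequence of Proposition~\ref{prop:closed_uncer_complementation} plus Corollary~\ref{cor:f_characterise}, and the attempted from-scratch finiteness argument, while aimed in the right direction, duplicates work the paper already did.
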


\begin{proof}
	Let $x\in V$ be a $0$--cube and 
	let $U'=\orth V$, the orthogonal complement of $V$ at $x$ (see 
	Definition~\ref{defn:orthocomp}).  
Proposition~\ref{prop:closed_uncer_complementation}
	implies that $U'\in\mathfrak F$, so $U=U'\cap F$ is also in $\mathfrak 
F$, since $\mathfrak F$ is closed under projections.  By the definition of the 
orthogonal complement, $V\to\cuco X$ extends to a convex embedding $V\times 
U'\to \cuco X$, and $(V\times U')\cap F=V\times U$ since $V\subset F$ and 
$F,V\times U'$ are convex.  Now, if $W\in\mathfrak F$ and $W\subset F$, and 
$V\to\cuco X$ extends to a convex embedding $V\times W\to\cuco X$, then 
$V\times W$ is necessarily contained in $F$, by convexity.  On the other hand, 
by the definition of the orthogonal complement, $W$ is parallel to a subcomplex 
of $U'$.  Hence $W$ is parallel to a subcomplex of $U$.  This implies the third 
assertion and uniqueness of $U$ up to parallelism.
\end{proof}

We now turn to applications of Theorem~\ref{thmi:main} that do not involve 
hierarchical hyperbolicity.

\subsection*{Simplicial boundary and staircases}
Theorem~\ref{thmi:main} also gives insight into the structure of the boundary 
of $\cuco X$.  We first mention an aggravating 
geometric/combinatorial question about cube complexes which is partly answered 
by our results.

A \emph{staircase} is a CAT(0) cube complex $\cuco Z$ defined as
follows.  First, a \emph{ray-strip} is a square complex of the form
$S_n=[n,\infty)\times[-\frac12,\frac12]$, with the product
cell-structure where $[n,\infty)$ has $0$--skeleton
$\{m\in\integers:m\geq n\}$ and $[-\frac12,\frac12]$ is a $1$--cube.
To build $\cuco Z$, choose an increasing sequence
$(a_n)_n$ of integers, collect the ray-strips
$S_{a_n}\cong[a_n,\infty)\times[-\frac12,\frac12]$, and identify
$[a_{n+1},\infty)\times\{-\frac12\}\subset S_{a_n+1}$ with
$[a_{n},\infty)\times\{\frac12\}\subset S_{a_n}$ for each $n$.  The
model staircase is the cubical neighbourhood of a Euclidean sector in
the standard tiling of $\Euclidean^2$ by squares, with one bounding
ray in the $x$--axis, but for certain $(a_n)_n$,
$\cuco Z$ may not contain a nontrivial Euclidean sector. (One can define a 
$d$-dimensional staircase analogously for $d\ge2$.)
%
% by choosing $d-1$ sequences
% of integers $(a_n^{(1)})_n,\ldots, (a_n^{(d-1)})_n$ and considering
% the constructions above, gluing together steps of the form
% $[a_i^{(1)},\infty)\times\cdots\times[a_i^{(d-1)},\infty)\times[-\frac12,\frac12
% ]$.)  
%
We will see below that the set of ``horizontal'' hyperplanes in $\cuco Z$ 
--
see Figure~\ref{fig:stairs} for the meaning of ``horizontal'' -- is interesting 
because there is no geodesic ray in $\cuco Z$ crossing exactly the set of 
horizontal
hyperplanes.

\begin{figure}[h]
	\includegraphics[width=0.15\textwidth]{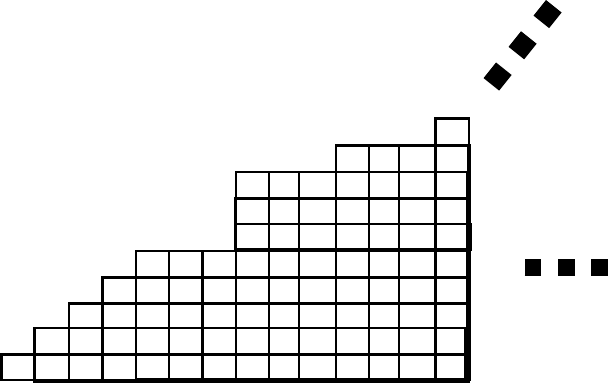}
	\caption{Part of a staircase.}\label{fig:stairs}
\end{figure}

Now let $\cuco X$ be a proper CAT(0) cube complex with a group $G$ acting 
properly and cocompactly.  Can there be a convex staircase subcomplex in $\cuco 
X$?  A positive answer seems very implausible, but this question 
is open and has bothered numerous researchers.

In Section~\ref{sec:fullvis}, we prove that if $\mathfrak F$ is a factor 
system, then $\cuco X$ cannot contain a convex staircase.  Hence, if $\cuco X$ 
admits a geometric group action satisfying any of the hypotheses in 
Theorem~\ref{thmi:main}, then $\cuco X$ cannot contain a convex staircase.  In 
fact, we prove something more general, which is best formulated in terms of the 
\emph{simplicial boundary} $\simp\cuco X$.

Specifically, the \emph{simplicial boundary} $\simp\cuco X$ of a CAT(0) cube 
complex $\cuco X$
was defined in~\cite{Hagen:boundary}. Simplices of $\simp\cuco
X$ come from equivalence classes of infinite sets $\mathcal H$ of 
hyperplanes such that:
\begin{itemize}
	\item if $H,H'\in\mathcal H$ are separated by a hyperplane $V$, then 
	$V\in\mathcal H$;
	\item if $H_1,H_2,H_3\in\mathcal H$ are disjoint, then one of $H_1,H_2,H_3$ 
	separates the other two;
	\item for $H\in\mathcal H$, at most one halfspace associated to $H$ contains 
	infinitely many $V\in\mathcal H$.
\end{itemize}
\noindent These \emph{boundary sets} are partially ordered by coarse
inclusion (i.e., $A\preceq B$ if all but finitely many hyperplanes
of $A$ are contained in $B$), and two are equivalent if
they have finite symmetric difference; $\simp\cuco X$ is the
simplicial realization of this partial order. The motivating example of a 
simplex of $\simp\cuco X$ is: given a
geodesic ray $\gamma$ of $\cuco X$, the set of hyperplanes crossing
$\gamma$ has the preceding properties.  Not all simplices are realized by a 
geodesic ray in this way: a simplex in $\cuco
X$ is called \emph{visible} if it is.  For example, if 
$\cuco 
Z$ is a staircase, then $\simp\cuco Z$ has an invisible $0$--simplex, 
represented by the set of horizontal hyperplanes.

Conjecture~2.8 of~\cite{BH:thick} holds that every simplex of $\simp\cuco X$ is 
visible when $\cuco X$ admits a proper cocompact group action; Theorem~\ref{thmi:main} hence proves a special case.  Slightly more generally:

\begin{cori}\label{cori:visible}Let $\cuco X$ be a proper CAT(0) cube complex 
	which admits a proper and cocompact group action satisfying the NICC for 
	hyperplanes. Then every simplex is 
	$\simp\cuco X$ is visible.  Moreover, let $v\in\simp\cuco X$ be a 
	$0$--simplex.  Then there exists a CAT(0) geodesic ray $\gamma$ such that the 
	set of hyperplanes crossing $\gamma$ represents $v$.
\end{cori}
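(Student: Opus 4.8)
The plan is to reduce the statement to Theorem~\ref{thmi:main} plus the staircase-obstruction argument, and to do the reduction at the level of boundary sets rather than geodesic rays. First I would recall from~\cite{Hagen:boundary} the characterisation of visibility: a simplex $v$ of $\simp\cuco X$ is visible precisely when, for a (equivalently, every) boundary set $\mathcal H$ representing $v$, there is a \emph{unidirectional} and \emph{inseparable} subset realised by a combinatorial geodesic ray; the only obstruction to producing such a ray is the failure of $\mathcal H$ to be ``realised'' by an actual ascending sequence of cubes, which is exactly the staircase phenomenon. So the heart of the matter is the $0$--simplex case: given a $0$--simplex $v$, represented by a boundary set $\mathcal H$ with the three listed properties and with no two hyperplanes $H,H'\in\mathcal H$ crossing one another that do not bound a square together, I must produce a combinatorial geodesic ray crossing exactly (a finite modification of) $\mathcal H$. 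Since the NICC for hyperplanes is one of the hypotheses of Theorem~\ref{thmi:main} — specifically, together with the essential index condition it is implied by the NICC alone once we pass to the essential core, and in the proper cocompact setting the essential index condition is automatic by cocompactness of the $G$--action on the essential core à la \cite{CapraceSageev:rank} — we are in a position to invoke the conclusion that $\mathfrak F$ is a factor system. (If the essential index condition genuinely needs separate justification I would insert a short lemma deriving it from cocompactness, as indicated in the discussion following Theorem~\ref{thmi:main}.)

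With $\mathfrak F$ a factor system in hand, I would run the staircase-exclusion argument of Section~\ref{sec:fullvis}: suppose for contradiction that $v$ is invisible. Then, following~\cite{Hagen:boundary}, the failure of visibility of a $0$--simplex forces the existence of an infinite strictly ascending chain of convex subcomplexes $F_0\subsetneq F_1\subsetneq\cdots$ obtained as gates/orthogonal complements along $\mathcal H$ — concretely, one extracts from $\mathcal H$ a nested sequence of combinatorial hyperplanes together with their closest-point projections, which lie in $\mathfrak F$ by definition of $\mathfrak F$, and whose union captures the ``staircase'' pattern. Finite multiplicity of $\mathfrak F$ (the defining property of a factor system) says no point lies in more than $N$ elements of $\mathfrak F$, and since the $F_i$ are strictly ascending and convex one can locate a single $0$--cube lying in infinitely many of them, a contradiction. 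This is precisely the mechanism by which ``factor system $\Rightarrow$ no convex staircase'', so I would cite Section~\ref{sec:fullvis} for the bookkeeping and only stress here that the boundary set $\mathcal H$ produces exactly the configuration that section rules out.

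It remains to upgrade combinatorial visibility to the CAT(0) statement: once there is a combinatorial geodesic ray $\gamma$ crossing exactly the hyperplanes of (a representative of) $v$, a CAT(0) geodesic ray with the same crossing set is produced by a standard straightening — e.g. take the CAT(0) geodesic ray from the initial $0$--cube of $\gamma$ in the direction determined by $\gamma$, using that in a CAT(0) cube complex a combinatorial ray and the CAT(0) ray asymptotic to it cross the same hyperplanes up to finitely many, and then absorb the finite discrepancy into the equivalence class defining the $0$--simplex. For a general $0$--simplex (rather than one already represented by a ray) one instead builds $\gamma$ directly as the diagonal/ascending union of cubes across the nested hyperplanes of $\mathcal H$, which converges to a genuine CAT(0) ray precisely because the chain does \emph{not} stabilise into a staircase, and this is the point where the argument of the previous paragraph is used. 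The main obstacle, and the only genuinely non-formal step, is the first one: correctly extracting from an arbitrary boundary set $\mathcal H$ representing $v$ the ascending chain in $\mathfrak F$ to which the finite-multiplicity conclusion applies — i.e. checking that invisibility of the $0$--simplex really does force a convex-staircase-type configuration rather than some subtler obstruction. Everything downstream of that is the already-established implication ``factor system $\Rightarrow$ no convex staircase $\Rightarrow$ visibility of $0$--simplices'' from Section~\ref{sec:fullvis} together with Theorem~\ref{thmi:main}.
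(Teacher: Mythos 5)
Your reduction hinges on the claim that ``in the proper cocompact setting the essential index condition is automatic by cocompactness of the $G$--action on the essential core,'' and that claim is false. The essential index condition (Definition~\ref{defn:essential_index_condition}) requires a \emph{uniform} bound $\zeta$ on $[\stabilizer_G(\widehat F):\stabilizer_G(F)]$ over \emph{all} $F\in\mathfrak F$; cocompactness of $\stabilizer_G(F)$ on $F$ gives no such uniformity, and indeed the paper shows in Theorem~\ref{thm:converse} that this uniformity is equivalent to, not automatic from, the factor-system conclusion. The introduction says explicitly that Corollary~\ref{cori:visible} ``could, in principle, hold even if $\mathfrak F$ is not a factor system, since we have not imposed the essential index condition'' --- the corollary is deliberately stated under a weaker hypothesis than Theorem~\ref{thmi:main}, so any proof that routes through establishing a factor system is unavailable.

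The paper's actual proof avoids factor systems entirely: Proposition~\ref{prop:visibility_group_action} derives visibility directly from NICC plus a geometric action. Invisibility of a simplex yields (as in the proof of Proposition~\ref{prop:visibility}) two sequences of hyperplanes $\{H_i\}$, $\{V_j\}$ in the staircase configuration; one forms the nested gates $F_i = \gate_{H_0}(\gate_{H_1}(\cdots(\gate_{H_{i-1}}(H_i))\cdots))\in\mathfrak F$, notes via Lemma~\ref{lem:stab_proj} and Proposition~\ref{prop:f_n_properties} that $\stabilizer_G(F_i)$ is commensurable with $K_i=\bigcap_{j\le i}\stabilizer_G(H_j)$ and infinite, applies NICC to make the $K_i$--essential cores of the $F_i$ eventually equal to a fixed unbounded $\widehat F\subset H_0$, and derives a contradiction because $\widehat F$ would then lie in an infinite descending chain of halfspaces cut out by the $V_j$. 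Your instinct that invisibility of a $0$--simplex forces a staircase-type configuration, and that the ``moreover'' clause follows from Lemma~3.32 of~\cite{Hagen:boundary}, is correct; the missing idea is that NICC alone is the right tool to kill that configuration, replacing the finite-multiplicity count you wanted to use.
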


The above could, in principle, hold even if $\mathfrak F$ is not a 
factor system, since we have not imposed the essential index condition.  The ``moreover'' part follows from the first part 
and~\cite[Lemma~3.32]{Hagen:boundary}. Corollary~\ref{cori:visible} combines 
with \cite[Theorem 5.13]{BH:thick} to imply that $\simp\cuco X$ detects 
thickness of order 1 and quadratic divergence for $G$, under the NICC 
condition. Corollary~\ref{cori:visible} also implies the corollary about 
staircases at the beginning of this paper.  More generally, we obtain the 
following from Corollary~\ref{cori:visible} and a simple argument 
in~\cite{Hagen:boundary}:

\begin{cori}\label{cori:orthant}
	Let $\gamma$ be a CAT(0)-metric or combinatorial geodesic ray in
	$\cuco X$, where $\cuco X$ is as in Corollary~\ref{cori:visible} and
	the set of hyperplanes crossing $\gamma$ represents a
	$d$-dimensional simplex of $\simp\cuco X$. Then there exists a
	combinatorially isometrically embedded $d+1$-dimensional orthant
	subcomplex $\mathcal O\subseteq Hull(\gamma)$. Moreover, $\gamma$
	lies in a finite neighbourhood of $\mathcal O$.
\end{cori}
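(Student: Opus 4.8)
The plan is to decompose the given $d$--simplex $\mathbf v$ of $\simp\cuco X$ into its $d+1$ vertices, realise each vertex by a geodesic ray via Corollary~\ref{cori:visible}, transport these rays inside $Hull(\gamma)$, and glue them into an orthant, exploiting that hyperplanes belonging to distinct vertices must cross. Write $\mathcal H(Z)$ for the set of hyperplanes crossing a subcomplex or ray $Z$, and pass to the convex subcomplex $\mathcal Y:=Hull(\gamma)$. A hyperplane crosses $\mathcal Y$ exactly when it crosses $\gamma$ (if $\gamma$ lies in a single halfspace of $H$ then so does its convex hull), so $\mathcal H(\mathcal Y)=\mathcal H(\gamma)$ is precisely the boundary set representing $\mathbf v$; this lets the combinatorial and CAT(0) cases be handled uniformly. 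By the structure of simplices of $\simp\cuco X$~\cite{Hagen:boundary}, if $v_0,\dots,v_d$ are the vertices of $\mathbf v$ then $\mathcal H(\gamma)=\mathcal H_0\sqcup\cdots\sqcup\mathcal H_d$ up to finite symmetric difference, where each $\mathcal H_i$ is a minimal boundary set representing $v_i$ and every hyperplane of $\mathcal H_i$ crosses every hyperplane of $\mathcal H_j$ when $i\ne j$; after discarding finitely many hyperplanes I may take the $\mathcal H_i$ to be honest, pairwise disjoint subsets of $\mathcal H(\gamma)$.

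Corollary~\ref{cori:visible} makes each $v_i$ visible; let $\rho_i$ be a combinatorial geodesic ray whose dual hyperplane set $\mathcal H(\rho_i)$ also represents $v_i$. Using the standard fact that the gates of two points onto a convex subcomplex $\mathcal Y$ are separated by a hyperplane $H$ precisely when $H$ crosses $\mathcal Y$ and separates the two points, $\delta_i:=\gate_{\mathcal Y}(\rho_i)$ is a combinatorial geodesic ray in $\mathcal Y$ with $\mathcal H(\delta_i)=\mathcal H(\mathcal Y)\cap\mathcal H(\rho_i)$, which differs from $\mathcal H_i$ by finitely many hyperplanes; discarding a bounded initial segment I arrange $\mathcal H(\delta_i)\subseteq\mathcal H_i$. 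Then $A_i:=Hull(\delta_i)\subseteq\mathcal Y$ is convex with $\mathcal H(A_i)=\mathcal H(\delta_i)\subseteq\mathcal H_i$, so $A_0,\dots,A_d$ have pairwise disjoint dual hyperplane sets, every hyperplane of $A_i$ crossing every hyperplane of $A_j$ for $i\ne j$.

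By the standard product--region construction for CAT(0) cube complexes, applied inductively, there is a convex subcomplex $\mathcal P\subseteq\mathcal Y$ isomorphic to $A_0\times\cdots\times A_d$ and containing a parallel copy of each $A_i$. The sub-product $\mathcal O\subseteq\mathcal P$ of the copies of the rays $\delta_i$ is then a $(d+1)$--dimensional orthant, combinatorially isometrically embedded since it is a product of geodesic rays inside the convex (hence isometrically embedded) subcomplex $\mathcal P$; it is contained in $Hull(\gamma)$, and $\mathcal H(\mathcal O)=\bigsqcup_i\mathcal H(\delta_i)$ differs from $\mathcal H(\gamma)$ by finitely many hyperplanes. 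Put $N:=|\mathcal H(\gamma)\setminus\mathcal H(\mathcal O)|$ and fix a $0$--cube $o\in\mathcal O$. For any $0$--cube $p\in\gamma\subseteq\mathcal Y$, any hyperplane separating $p$ from $\mathcal P$ separates $p$ from $o$, hence lies in $\mathcal H(\gamma)$ but not in $\mathcal H(\mathcal P)=\mathcal H(\mathcal O)$; as $\mathcal P$ is convex, $\dist(p,\mathcal P)$ is the number of such hyperplanes, so $\dist(p,\mathcal P)\le N$. Since $\simp A_i$ is the single vertex $v_i$, the structure theory of~\cite{Hagen:boundary} gives that $A_i$ lies in a finite neighbourhood of $\delta_i$; hence $\mathcal P$ lies in a finite neighbourhood of $\mathcal O$, and therefore so does $\gamma$. (If $\gamma$ is a CAT(0) ray it lies within $\dim\cuco X$ of a combinatorial ray with the same dual hyperplanes, so the conclusion persists.)

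I expect the crux to be the last two ingredients invoked from~\cite{Hagen:boundary}: the decomposition of a simplex of $\simp\cuco X$ into pairwise--transverse minimal boundary sets, and --- most delicately --- the fact that the convex hull of a geodesic ray representing a $0$--simplex lies in a finite neighbourhood of that ray, which is exactly what forces $\mathcal O$ (not merely the product region $\mathcal P$) to stay close to $\gamma$. The remaining work --- keeping track of the finitely many hyperplanes discarded at each stage, so that $\mathcal H(\mathcal O)$ and $\mathcal H(\gamma)$ differ by a finite set, and running the product--region construction so as to remain inside $Hull(\gamma)$ --- is routine but requires care.
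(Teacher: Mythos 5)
Your construction is genuinely different from the paper's.  You build the ambient product region $\mathcal P = Hull(\bigcup_i A_i)\cong\prod_i A_i$ from scratch inside $Hull(\gamma)$ via the Helly/ultrafilter argument and then extract the orthant as the subproduct of the rays $\delta_i$, whereas the paper invokes Theorem~3.23 of~\cite{Hagen:boundary} to produce, directly, a cubical orthant $\prod_i\gamma_i'$ in $\cuco X$ that is already the product of rays $\gamma_i'$ satisfying $Hull(\gamma_i')=\gamma_i'$, and then argues that $\gamma$ is parallel into this orthant and transports it into $\cuco Y=Hull(\gamma)$ by projection.  Your gate-map reasoning (that $\gate_{\mathcal Y}(\rho_i)$ is, after removing stalls, a geodesic ray with dual hyperplanes $\mathcal H(\mathcal Y)\cap\mathcal H(\rho_i)$), the product-region construction, and the hyperplane count giving $\gamma\subseteq\neb_N(\mathcal P)$ are all sound, and the proposal is in some ways more explicit than the paper about landing inside $Hull(\gamma)$.

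However, there is a genuine soft spot exactly where you flag it.  Your final step needs each $A_i=Hull(\delta_i)$ to lie in a finite neighbourhood of the ray $\delta_i$, and you cite this to ``the structure theory of~\cite{Hagen:boundary}.''  This is not an off-the-shelf fact: a minimal boundary set has a chain-like ``backbone,'' but between consecutive backbone hyperplanes there can \emph{a priori} sit clumps of hyperplanes, and a bound on $\dist(x,\delta_i)$ for $x\in Hull(\delta_i)$ is not immediate from minimality alone (the consistency constraints on the ultrafilter at $x$ force certain crossings with the last separating hyperplane, but not pairwise crossings among the skipped hyperplanes, so finite dimension does not obviously close the argument).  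The paper avoids this issue entirely: Theorem~3.23 of~\cite{Hagen:boundary} already hands you an honest cubical orthant $\prod_i\gamma_i'$ where each factor $\gamma_i'$ equals its own hull, so no thinness argument is required; one then only needs that $\gamma$ is parallel into the orthant (which follows from comparing dual hyperplane sets) and that parallelism over a ray forces finite Hausdorff distance.  To repair your version, either locate a precise statement in~\cite{Hagen:boundary} that a geodesic ray representing a $0$--simplex in a finite-dimensional cube complex has convex hull in a bounded neighbourhood of itself, or replace the visibility input (Corollary~\ref{cori:visible} plus gating) with Theorem~3.23 directly, as the paper does, and then gate the resulting orthant into $Hull(\gamma)$.
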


(A $k$-dimensional orthant subcomplex is a CAT(0) cube complex
isomorphic to the product of $k$ copies of the standard tiling of
$[0,\infty)$ by $1$--cubes, and the convex hull $Hull(A)$ of a
subspace $A\subseteq\cuco X$ is the smallest convex subcomplex
containing $A$.)

Corollary~\ref{cori:orthant} is related to Lemma~4.9
of~\cite{Huang:quasiflats} and to statements
in~\cite{Xie:tits,BKS:quasiflats} about Euclidean sectors in
cocompact CAT(0) cube complexes and arcs in the Tits
boundary. In particular it shows that in any CAT(0) cube complex with a 
proper cocompact group action satisfying NICC, nontrivial geodesic arcs on the 
Tits boundary extend to arcs of length $\pi/2$.

\subsection*{Further questions and approaches}
We believe that any proper cocompact CAT(0) cube complex admits a factor 
system, but that some additional ingredient is needed to remove the auxiliary 
hypotheses in Theorem~\ref{thmi:main}; we hope that the applications we have 
outlined stimulate interest in finding this additional idea.  Since the 
property of admitting a factor system is inherited by convex 
subcomplexes~\cite{BHS:HHS_I}, we suggest trying to 
use $G$--cocompactness of $\cuco X$ to arrange for a convex 
(non-$G$--equivariant) embedding of $\cuco X$ into a CAT(0) cube complex $\cuco 
Y$ where a factor system can be more easily shown to exist.  One slightly 
outrageous possibility is:

\begin{questioni}\label{question:A}
	Let $\cuco X$ be a CAT(0) cube complex which admits a proper and cocompact 
	group action. Does $\cuco X$ embed as a convex subcomplex of the universal cover 
	of the Salvetti complex of some right-angled Artin group?
\end{questioni}

However, there are other possibilities, for example trying to embed $\cuco X$ 
convexly in a CAT(0) cube complex whose automorphism group is sufficiently tame 
to enable one to use the proof of Theorem~\ref{thmi:main}, or some variant of 
it.

\subsection*{Plan of the paper}
In Section~\ref{sec:background}, we discuss the necessary background.  
Section~\ref{sec:decomposing} contains basic facts about $\mathfrak F$, and 
Section~\ref{sec:orthocomp} relates $\mathfrak F$ to orthogonal complements.  
Section~\ref{sec:uniform} introduces the auxiliary hypotheses for 
Theorem~\ref{thmi:main}, which we prove in 
Section~\ref{sec:proof_of_main_theorem}.  The applications to the simplicial 
boundary are discussed in Section~\ref{sec:fullvis}.

\subsection*{Acknowledgements}
MFH thanks: Jason Behrstock and Alessandro Sisto for discussions of factor 
systems during our work on~\cite{BHS:HHS_I}; Nir Lazarovich and Dani Wise for 
discussions on Question~\ref{question:A}; Talia Fern\'os, 
Dan Guralnik, Alessandra Iozzi, Yulan Qing, and Michah Sageev for discussions 
about staircases.  We thank Richard Webb and Henry Wilton for helping to 
organize \emph{Beyond Hyperbolicity} (Cambridge, June 2016), at 
which much of the work on this paper was done.  Both of the authors thank 
Franklin for his sleepy vigilance and Nir Lazarovich for a comment on the proof 
of Lemma~\ref{lem:chain_find}.  We are greatly indebted to Bruno 
Robbio and Federico Berlai for a discussion which led us to discover a gap in 
an earlier version of this paper, and to Elia Fioravanti for independently 
drawing our attention to the same issue.  We also thank an anonymous referee 
for helpful comments on an earlier version, and another anonymous referee for several useful comments, including suggesting 
a simplification of an argument in Section~\ref{subsec:closed_under_complementation}.

\section{Background}\label{sec:background}

\subsection{Basics on CAT(0) cube complexes}\label{subsec:cubes_convexity}
Recall that a \emph{CAT(0) cube complex} $\cuco X$ is a
simply-connected cube complex in which the link of every vertex is a
simplicial flag complex (see e.g.~\cite[Chapter II.5]{Bridson:NPC},~\cite{Sageev:PCMI,Wise:QCH,Chepoi:median}
for precise definitions and background).  In this paper, $\cuco X$
always denotes a CAT(0) cube complex.  Our choices of language and
notation for describing convexity, hyperplanes, gates, etc. follow
the account given in~\cite[Section 2]{BHS:HHS_I}.

\begin{defn}[Hyperplane, carrier, combinatorial hyperplane]\label{defn:hyperplane}
A \emph{midcube} in the unit cube $c=[-\frac12,\frac12]^n$ is a subspace
obtained by restricting exactly one coordinate to $0$.  A
\emph{hyperplane} in $\cuco X$ is a connected subspace $H$ with the
property that, for all cubes $c$ of $\cuco X$, either $H\cap
c=\emptyset$ or $H\cap c$ consists of a single midcube of $c$. The
\emph{carrier} $\neb(H)$ of the hyperplane $H$ is the union of all
closed cubes $c$ of $\cuco X$ with $H\cap c\neq\emptyset$. The
inclusion $H\to\cuco X$ extends to a combinatorial embedding
$H\times[-\frac12,\frac12]\stackrel{\cong}{\longrightarrow}\neb(H)\hookrightarrow\cuco
X$ identifying $H\times\{0\}$ with $H$.  Now, $H$ is isomorphic to a
CAT(0) cube complex whose cubes are the midcubes of the cubes in
$\neb(H)$.  The subcomplexes $H^\pm$ of $\neb(H)$ which are the
images of $H\times\{\pm\frac12\}$ under the above map are isomorphic
as cube complexes to $H$, and are \emph{combinatorial hyperplanes}
in $\cuco X$.  Thus each hyperplane of $\cuco X$ is associated to
two combinatorial hyperplanes lying in $\neb(H)$.
\end{defn}

\begin{remnon}\label{rem:comb_hyp}
The distinction between hyperplanes (which are not subcomplexes) and combinatorial hyperplanes (which are) is important.  Given $A\subset\cuco X$, either a convex subcomplex or a hyperplane, and a hyperplane $H$, we sometimes say $H$ \emph{crosses} $A$ to mean that $H\cap A\neq\emptyset$.  Observe that the set of hyperplanes crossing a hyperplane $H$ is precisely the set of hyperplanes crossing the associated combinatorial hyperplanes.
\end{remnon}

\begin{defn}[Convex subcomplex]\label{defn:convex}
A subcomplex $\cuco Y\subseteq\cuco X$ is \emph{convex} if $\cuco Y$
is \emph{full} --- i.e. every cube $c$ of $\cuco X$ whose
$0$--skeleton lies in $\cuco Y$ satisfies $c\subseteq\cuco Y$ ---
and $\cuco Y^{(1)}$, endowed with the obvious path-metric, is
metrically convex in $\cuco X^{(1)}$. \end{defn}

There are various
characterizations of cubical convexity.  Cubical convexity coincides
with CAT(0)--metric convexity \emph{for
subcomplexes}~\cite{Haglund:semisimple}, but not for arbitrary
subspaces.

\begin{defn}[Convex Hull] Given a subset $A\subset\cuco X$, we denote by $Hull(A)$ its
\emph{convex hull}, i.e. the intersection of all convex subcomplexes
containing $A$.
\end{defn}

If $\cuco Y\subseteq\cuco X$ is a convex subcomplex, then $\cuco Y$
is a CAT(0) cube complex whose hyperplanes have the form $H\cap\cuco
Y$, where $H$ is a hyperplane of $\cuco X$, and two hyperplanes
$H\cap\cuco Y,H'\cap\cuco Y$ intersect if and only if $H,H'$
intersect. 

Recall from~\cite{Chepoi:median} that the graph $\cuco X^{(1)}$,
endowed with the obvious path metric $\dist_{\cuco X}$ in which
edges have length $1$, is a \emph{median graph} (and in fact being a
median graph characterizes $1$--skeleta of CAT(0) cube complexes
among graphs): given $0$--cubes $x,y,z$, there exists a unique
$0$--cube $m=m(x,y,z)$, called the \emph{median} of $x,y,z$, so that
$Hull(x,y)\cap Hull(y,z)\cap Hull(x,z)=\{m\}$.

Let $\cuco Y\subseteq\cuco X$ be a convex subcomplex.  Given a
$0$--cube $x\in\cuco X$, there is a unique $0$--cube $y\in\cuco Y$
so that $\dist_{\cuco X}(x,y)$ is minimal among all $0$--cubes in
$\cuco Y$.  Indeed, if $y'\in\cuco Y$, then the median $m$ of
$x,y,y'$ lies in $\cuco Y$, by convexity of $\cuco Y$, but
$\dist_{\cuco X}(x,y')=\dist_{\cuco X}(x,m)+\dist_{\cuco X}(m,y')$,
and the same is true for $y$.  Thus, if $\dist_{\cuco X}(x,y')$ and
$\dist_{\cuco X}(x,y)$ realize the distance from $x$ to $\cuco
Y^{(0)}$, we have $m=y=y'$.

\begin{defn}[Gate map on $0$--skeleton]\label{defn:gate_map} For a convex subcomplex $\cuco Y\subseteq \cuco X$, the \emph{gate map} to $\cuco Y$ is the map $\gate_{\cuco Y}:\cuco
X^{(0)}\to\cuco Y^{(0)}$ so that or all $v\in \cuco
X^{(0)}$, $\gate_{\cuco Y}(v)$ is the unique $0$--cube of $\cuco Y$
lying closest to $v$.
\end{defn}

\begin{lem}\label{lem:cubical_gate_map}
Let $\cuco Y\subseteq\cuco X$ be a convex subcomplex.  Then the map $\gate_{\cuco Y}$ from Definition~\ref{defn:gate_map} extends to a unique cubical map $\gate_{\cuco Y}:\cuco X\to\cuco Y$ so that the following holds:  for any $d$--cube $c$, of
$\cuco X$ with vertices $x_0,\ldots,x_{2^d}\in\cuco X^{(0)}$, the map
$\gate_{\cuco Y}$ collapses $c$ to the unique $k$--cube $c'$ in $\cuco Y$
with $0$--cells $\gate_{\cuco Y}(x_0)\ldots, \gate_{\cuco
Y}(x_{2^d})$ in the natural way, respecting the cubical
structure.  

Furthermore, for any convex subcomplex
$\cuco Y, \cuco Y'\subseteq \cuco X$, the hyperplanes crossing
$\gate_{\cuco Y}(\cuco Y')$ are precisely the hyperplanes which
cross both $\cuco Y$ and $\cuco Y'$.
\end{lem}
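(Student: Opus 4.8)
The plan is to describe the gate map on $0$--cubes purely in terms of hyperplanes, then use that description to build the cubical extension, and finally read off the statement about crossing hyperplanes.

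\emph{Step 1 (the gate and hyperplanes).} For a $0$--cube $x$, the gate property $\dist_{\cuco X}(x,y)=\dist_{\cuco X}(x,\gate_{\cuco Y}(x))+\dist_{\cuco X}(\gate_{\cuco Y}(x),y)$ for all $0$--cubes $y\in\cuco Y$ follows from the median argument already given just before Definition~\ref{defn:gate_map}, applied to $m(x,\gate_{\cuco Y}(x),y)$. Hence no hyperplane both separates $x$ from $\gate_{\cuco Y}(x)$ and separates $\gate_{\cuco Y}(x)$ from a point of $\cuco Y$; from this one deduces the halfspace description: a hyperplane $H$ separates $x$ from $\gate_{\cuco Y}(x)$ if and only if $\cuco Y$ lies in one halfspace of $H$ and $x$ in the other. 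Applying this to two $0$--cubes $x,x'$, it follows that the hyperplanes separating $\gate_{\cuco Y}(x)$ from $\gate_{\cuco Y}(x')$ are exactly those which both separate $x$ from $x'$ and cross $\cuco Y$; in particular $\gate_{\cuco Y}$ is $1$--Lipschitz on $\cuco X^{(0)}$.

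\emph{Step 2 (cubical extension).} Let $c$ be a $d$--cube with (pairwise crossing) dual hyperplanes $H_1,\dots,H_d$, and set $I=\{i: H_i\text{ crosses }\cuco Y\}$, $k=|I|$. By Step 1, on the vertices of $c$ the map $\gate_{\cuco Y}$ factors through the projection onto the coordinates in $I$, its image has exactly $2^k$ elements, and any two of them are joined in $\cuco Y$ by a geodesic crossing only those $H_i$, $i\in I$, that separate them. Fixing a vertex $x_0$ of $c$, for each $i\in I$ the image of the vertex of $c$ joined to $x_0$ by the edge dual to $H_i$ is joined to $\gate_{\cuco Y}(x_0)$ by an edge dual to $H_i$; since the $H_i$ ($i\in I$) pairwise cross and are dual to edges at the common $0$--cube $\gate_{\cuco Y}(x_0)$, they span a $k$--cube $c'$, whose vertex set is $\gate_{\cuco Y}(c^{(0)})$ and which lies in $\cuco Y$ because $\cuco Y$ is full. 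There is then a unique cubical map $c\to c'$ inducing $\gate_{\cuco Y}$ on vertices, namely the collapse of the coordinates outside $I$. Restricted to a face of $c$ this is the analogous collapse (the dual hyperplanes of a face form a sub-collection, and ``crossing $\cuco Y$'' is intrinsic), so the maps on the various cubes agree on overlaps and glue to a cubical map $\gate_{\cuco Y}\colon\cuco X\to\cuco Y$; uniqueness is immediate, since a cubical map is determined by its effect on $0$--cubes.

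\emph{Step 3 (crossing hyperplanes).} For convex subcomplexes $\cuco Y,\cuco Y'$, the set $\gate_{\cuco Y}(\cuco Y')$ is a subcomplex of $\cuco Y$, and each of its cubes is a face of $\gate_{\cuco Y}(c')$ for some cube $c'$ of $\cuco Y'$, whose dual hyperplanes are among those of $c'$ by Step 2. So any hyperplane crossing $\gate_{\cuco Y}(\cuco Y')$ crosses such a $c'$, hence crosses $\cuco Y'$, and it crosses $\cuco Y$ since $\gate_{\cuco Y}(\cuco Y')\subseteq\cuco Y$. Conversely, if $H$ crosses both $\cuco Y$ and $\cuco Y'$, choose an edge $e'$ of $\cuco Y'$ dual to $H$; by Step 1 its two endpoints have distinct images joined by the edge $\gate_{\cuco Y}(e')\subseteq\gate_{\cuco Y}(\cuco Y')$, which is dual to $H$, so $H$ crosses $\gate_{\cuco Y}(\cuco Y')$. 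The main obstacle is Step 2 --- upgrading the hyperplane bookkeeping of Step 1 to the fact that the images of a cube's vertices span a single $k$--cube --- which is where convexity of $\cuco Y$ (via the halfspace description of the gate) and the flag/link structure of $\cuco X$ (via the standard fact that pairwise-crossing hyperplanes dual to edges at one vertex span a cube) both enter; once the map is known to be cubical, Step 3 is routine.
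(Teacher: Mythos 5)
Your proof is correct and takes essentially the same route as the paper's: the core content in both cases is that a hyperplane separates $\gate_{\cuco Y}(x)$ from $\gate_{\cuco Y}(x')$ exactly when it separates $x$ from $x'$ and crosses $\cuco Y$, and the cubical extension is built from this bookkeeping. The only stylistic difference is that you prove the cubical extension from scratch (halfspace description of the gate, plus the standard fact that pairwise-crossing hyperplanes dual to edges at a common vertex span a cube, plus fullness of $\cuco Y$), whereas the paper cites~\cite{BHS:HHS_I} and briefly invokes the Helly property for the same point; and for the ``furthermore'' clause you route through the cubical structure you just built, while the paper gives a direct gate/median argument --- but these are the same underlying facts packaged slightly differently.
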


\begin{proof}
The first part is proved in~\cite[p. 1743]{BHS:HHS_I}: observe that the integer $k$ is the number of
hyperplanes that intersect both $c$ and $\cuco Y$. The hyperplanes that intersect  $c'$ are precisely the
hyperplanes which intersect both $c$ and $\cuco Y$.  Indeed, the Helly property ensures that there are cubes crossing exactly this set of hyperplanes, while convexity of $\cuco Y$ shows that at least one such cube lies in $\cuco Y$; the requirement that it contain $\gate_{\cuco Y}(x_i)$ then uniquely determines $c'$.

To prove the second statement, let $H$ be a hyperplane crossing $\cuco Y$ and $\cuco Y'$.  Then $H$ separates $0$--cubes $y_1,y_2\in\cuco Y'$, and thus separates their gates in $\cuco Y$, since, because it crosses $\cuco Y$, it cannot separate $y_1$ or $y_2$ from $\cuco Y$.  On the other hand, if $H$ crosses $\gate_{\cuco Y}(\cuco Y')$, then it separates $\gate_{\cuco Y}(y_1),\gate_{\cuco Y}(y_2)$ for some $y_1,y_2\in\cuco Y'$.  Since it cannot separate $y_1$ or $y_2$ from $\cuco Y$, the hyperplane $H$ must separate $y_1$ from $y_2$ and thus cross $\cuco Y$.  (Here we have used the standard fact that $H$ separates $y_i$ from $\gate_{\cuco Y}(y_i)$ if and only if $H$ separates $y_i$ from $\cuco Y$; see e.g.~\cite[p. 1743]{BHS:HHS_I}.)  Hence $H$ crosses $\gate_{\cuco Y}(\cuco Y')$ if and only if $H$ crosses $\cuco Y,\cuco Y'$.
\end{proof}

The next definition formalizes the relationship between $\gate_{\cuco Y}(\cuco Y'),\gate_{\cuco Y'}(\cuco Y)$ in the above lemma.

\begin{defn}[Parallel]\label{defn:parallel}
The convex subcomplexes $F$ and $F'$ are \emph{parallel}, written $F\parallel F'$, if for each hyperplane $H$ of $\cuco X$, we have $H\cap F\neq\emptyset$ if and only if $H\cap F'\neq\emptyset$.  The subcomplex $F$ is \emph{parallel into} $F'$ if $F$ is parallel to a subcomplex of $F'$, i.e. every hyperplane intersecting $F$ intersects $F'$.  We denote this by $F\parint F'$.  Any two $0$--cubes are parallel subcomplexes.
\end{defn}

The following is proved in~\cite[Section 2]{BHS:HHS_I} and illustrated in Figure~\ref{fig:parallel}:

\begin{lem}\label{lem:parallel_product}
Let $F,F'$ be parallel subcomplexes of the CAT(0) cube complex
$\cuco X$.  Then $Hull(F\cup F')\cong F\times A$, where $A$ is the convex hull of a shortest
combinatorial geodesic with endpoints on $F$ and $F'$.  The
hyperplanes intersecting $A$ are those separating $F,F'$.
Moreover, if $D,E\subset\cuco X$ are convex subcomplexes, then
$\gate_E(D)\subset E$ is parallel to $\gate_D(E)\subset D$.
\end{lem}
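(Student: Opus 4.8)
The ``moreover'' clause is immediate from Lemma~\ref{lem:cubical_gate_map}: its second part says the hyperplanes crossing $\gate_E(D)$ are precisely those crossing both $D$ and $E$, and by symmetry the same set crosses $\gate_D(E)$, so the two subcomplexes are parallel by Definition~\ref{defn:parallel}. For the remaining assertions I would fix a shortest combinatorial geodesic $\gamma$ with one endpoint $p\in F$ and one endpoint $q\in F'$, so that its length equals the distance from $F$ to $F'$ and hence $p=\gate_F(q)$, $q=\gate_{F'}(p)$ (the gate being the unique closest $0$--cube); set $A=Hull(\gamma)$. The strategy is to pin down which hyperplanes cross $A$ and which cross $Hull(F\cup F')$, to prove that the two relevant hyperplane families are ``orthogonal,'' and then to read off the product decomposition from standard facts about CAT(0) cube complexes.

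\emph{Which hyperplanes cross $A$.} The hyperplanes crossing $A=Hull(\gamma)$ are exactly those crossing $\gamma$, equivalently those separating $p$ from $q$; I claim these are exactly the hyperplanes separating $F$ from $F'$. One direction is trivial. Conversely, if $H$ separates $p=\gate_F(q)$ from $q$, then by the standard gate fact recalled in the proof of Lemma~\ref{lem:cubical_gate_map} it separates $q$ from $F$, so $F$ lies on one side of $H$; then $H$ does not cross $F$, hence (by $F\parallel F'$) does not cross $F'$, so $F'$ also lies on one side of $H$, and since $q\in F'$ is on the side opposite $F$, the hyperplane $H$ separates $F$ from $F'$. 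The same kind of argument shows that the hyperplanes crossing $Hull(F\cup F')$ are exactly $\mathcal A\sqcup\mathcal B$, where $\mathcal A$ is the set of hyperplanes crossing $F$ and $\mathcal B$ the set separating $F$ from $F'$: a hyperplane separating two points of $F\cup F'$ either crosses $F$ or $F'$ (so lies in $\mathcal A$, using $F\parallel F'$), or separates a point of $F$ from a point of $F'$ while crossing neither, in which case, exactly as above, it lies in $\mathcal B$; and $\mathcal A\cap\mathcal B=\emptyset$ because a hyperplane in $\mathcal B$ has $F$ entirely on one side.

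\emph{Orthogonality --- the crux.} The key step is that every hyperplane $V\in\mathcal A$ crosses every hyperplane $K\in\mathcal B$. Such $V,K$ are distinct, since $V$ crosses $F$ while $K$ does not, so by the standard dichotomy (two distinct hyperplanes either cross, or a halfspace of one is contained in a halfspace of the other) it suffices to rule out that a halfspace of $K$ is contained in a halfspace of $V$. But the halfspace of $K$ containing $F$ cannot lie in a halfspace of $V$, because $V$ crosses $F$ and so $F$ meets both halfspaces of $V$; and the halfspace of $K$ containing $F'$ cannot lie in a halfspace of $V$, because $V$ likewise crosses $F'$ --- this is the point at which $F\parallel F'$ is genuinely used, as opposed to $F$ and $F'$ merely being crossed by overlapping hyperplane sets. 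Hence $V$ and $K$ cross. I expect this to be the only substantial point in the proof; everything else is bookkeeping.

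\emph{Reading off the product.} Since every hyperplane of $\mathcal A$ crosses every hyperplane of $\mathcal B$, the standard decomposition of a CAT(0) cube complex along a partition of its hyperplane set into two mutually crossing families (see e.g.~\cite{CapraceSageev:rank}) expresses $Hull(F\cup F')$ as a product $P\times Q$ in which the fibres $P\times\{\ast\}$ are crossed exactly by the hyperplanes in $\mathcal A$ and the fibres $\{\ast\}\times Q$ exactly by those in $\mathcal B$. To identify the factors with $F$ and $A$: since no hyperplane of $\mathcal B$ crosses $F$, all $0$--cubes of $F$ lie on the same side of each hyperplane in $\mathcal B$, so $F$ lies in a single fibre $\Phi=P\times\{\ast\}$; moreover $F$ is crossed by every hyperplane of $\mathcal A$, hence by every hyperplane of $\Phi$; and a convex subcomplex of a CAT(0) cube complex that is crossed by all of its hyperplanes must be the whole complex --- for if $v$ lies outside such a subcomplex $\cuco W$, the hyperplane separating $v$ from $\gate_{\cuco W}(v)$ separates $v$ from $\cuco W$ and so cannot cross $\cuco W$. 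Therefore $F=\Phi\cong P$, and symmetrically $A$ equals a fibre $\{\ast\}\times Q\cong Q$ (by Step 1 its crossing hyperplanes are exactly $\mathcal B$), so $Hull(F\cup F')\cong F\times A$, with $F$ and $F'$ the fibres over the two endpoints of $\gamma$ and the hyperplanes of the $A$--factor being exactly those separating $F$ from $F'$. The degenerate case $F=F'$, where $\gamma$ is a single point, is trivial.
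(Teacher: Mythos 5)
Your argument is correct, and there is nothing in the paper itself to compare it against: the paper does not prove Lemma~\ref{lem:parallel_product} but instead cites \cite[Section~2]{BHS:HHS_I}. Your proof is a valid, self-contained reconstruction along the expected lines. The ``moreover'' clause is indeed an immediate symmetry consequence of Lemma~\ref{lem:cubical_gate_map}. For the product structure, you correctly characterize the hyperplanes of $A$ as exactly those separating $F$ from $F'$, show that the hyperplanes crossing $Hull(F\cup F')$ split as $\mathcal A\sqcup\mathcal B$ (hyperplanes crossing $F$, resp.\ separating $F$ from $F'$), prove that every member of $\mathcal A$ crosses every member of $\mathcal B$ by ruling out nesting --- the one place where parallelism of $F$ and $F'$ is indispensable --- and then invoke the standard product decomposition along a mutually crossing partition of the hyperplane set (as in~\cite{CapraceSageev:rank}). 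The identification of the factors with $F$ and $A$ via the observation that a convex subcomplex crossed by every ambient hyperplane must be the whole complex is also standard and correctly argued. One small expository remark: in the orthogonality step you actually use $F\parallel F'$ both to know $V\in\mathcal A$ crosses $F'$ and, earlier, to know a hyperplane crossing $F'$ but not $F$ cannot occur when listing the crossing set of $Hull(F\cup F')$; parallelism is used in both places, not only in the ``crux.'' This does not affect the correctness of the proof.
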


\begin{figure}[h]
\begin{overpic}[width=0.35\textwidth]{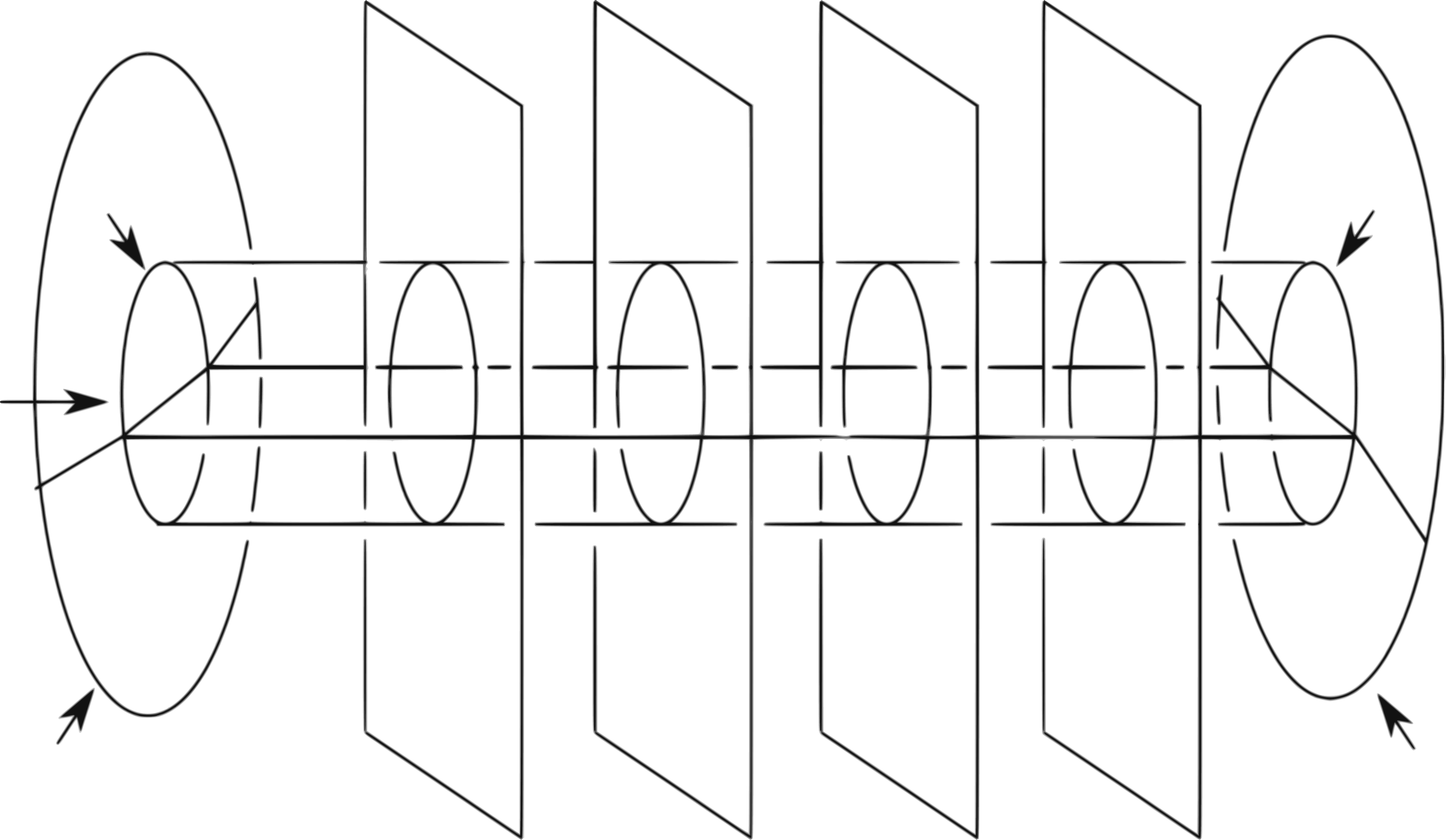}
\put(0,1){$D$}
\put(97,2){$E$}
\put(-6,45){$\gate_{D}(E)$}
\put(95,45){$\gate_{E}(D)$}
\put(-6,29){$H$}
\put(26,-2){$V$}
\end{overpic}
\caption{Here, $D,E$ are convex subcomplexes.  The gates $\gate_D(E),\gate_{E}(D)$ are parallel, and are joined by a product region, shown as a cylinder.  Each hyperplane crossing $Hull\big(\gate_D(E)\cup\gate_{E}(D)\big)$ either separates $\gate_D(E),\gate_{E}(D)$ (e.g. the hyperplane $V$) or crosses both of $\gate_D(E),\gate_{E}(D)$ (e.g. the hyperplane $H$).}\label{fig:parallel}
\end{figure}

The next Lemma will be useful in Section~\ref{sec:decomposing}.

\begin{lem}\label{lem:project_curry}For convex subcomplexes $C,D,E$, we have $\gate_{\gate_C(D)}(E)\parallel\gate_C(\gate_D(E))\parallel\gate_C(\gate_E(D))$.
\end{lem}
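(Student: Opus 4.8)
The plan is to prove the identity hyperplane-by-hyperplane, using the second clause of Lemma~\ref{lem:cubical_gate_map} and the fact (from Lemma~\ref{lem:parallel_product}) that two convex subcomplexes are parallel if and only if they are crossed by exactly the same set of hyperplanes. So the whole statement reduces to showing that each of the three subcomplexes $\gate_{\gate_C(D)}(E)$, $\gate_C(\gate_D(E))$, $\gate_C(\gate_E(D))$ is crossed by the same set of hyperplanes, and then invoking Definition~\ref{defn:parallel}.

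First I would compute the hyperplane set of $\gate_{\gate_C(D)}(E)$. By Lemma~\ref{lem:cubical_gate_map}, a hyperplane $H$ crosses $\gate_{\gate_C(D)}(E)$ if and only if $H$ crosses both $E$ and $\gate_C(D)$; and $H$ crosses $\gate_C(D)$ if and only if $H$ crosses both $C$ and $D$. Hence $H$ crosses $\gate_{\gate_C(D)}(E)$ if and only if $H$ crosses all three of $C$, $D$, and $E$. This characterization is manifestly symmetric in $C,D,E$ — or at least symmetric in the way I need: applying the same two-step unwinding to $\gate_C(\gate_D(E))$ gives ``$H$ crosses $C$, and $H$ crosses $\gate_D(E)$, i.e. $H$ crosses $D$ and $E$,'' again the condition that $H$ crosses all three; and for $\gate_C(\gate_E(D))$ one gets ``$H$ crosses $C$, and $H$ crosses $D$ and $E$,'' the same condition once more. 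So all three subcomplexes are crossed by exactly the set of hyperplanes crossing each of $C$, $D$, $E$, and therefore they are pairwise parallel.

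The only point requiring a little care — and the step I expect to be the main (minor) obstacle — is that the second clause of Lemma~\ref{lem:cubical_gate_map} is stated for the gate of one \emph{convex subcomplex} in another, so I should note at the outset that $\gate_C(D)$, $\gate_D(E)$, $\gate_E(D)$ are themselves convex subcomplexes (the gate map is cubical by the first clause of Lemma~\ref{lem:cubical_gate_map}, and the image of a convex subcomplex under the gate map is convex), so that the lemma may legitimately be applied in the nested situations above. Once that is observed, the argument is just the two-fold application of the hyperplane criterion described in the previous paragraph, and no further computation is needed.
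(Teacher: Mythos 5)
Your proposal is correct and takes essentially the same approach as the paper's proof: both reduce to the observation, via Lemma~\ref{lem:cubical_gate_map}, that each of the three subcomplexes is crossed by precisely the hyperplanes crossing all of $C$, $D$, and $E$, and then invoke Definition~\ref{defn:parallel}. Your version is slightly tidier in directly using the biconditional in Lemma~\ref{lem:cubical_gate_map} twice, and your remark that one must first note convexity of $\gate_C(D)$, $\gate_D(E)$, $\gate_E(D)$ to iterate the lemma is a legitimate (if standard) point that the paper leaves implicit.
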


\begin{proof}
Let $F=\gate_C(D)$.  Let $H$ be a hyperplane so that $H\cap \gate_F(E)\neq\emptyset$.
Then $H\cap E,H\cap F\neq \emptyset$ and thus $H\cap C, H\cap D\neq
\emptyset$, by Lemma~\ref{lem:cubical_gate_map}.
Thus $\gate_F(E)$ is parallel into $\gate_C(\gate_D(E))$ and
$\gate_C(\gate_E(D))$. However, the hyperplanes crossing either of
these are precisely the hyperplanes crossing all of $C,D,E$. Thus,
they cross $F$ and $D$, and thus cross $\gate_F(E)$ by Lemma~\ref{lem:cubical_gate_map}.
\end{proof}

The next lemma will be used heavily in Section~\ref{sec:proof_of_main_theorem}, and gives a group theoretic description of the stabilizer of a projection.

\begin{lem}\label{lem:stab_proj} Let $\cuco X$ be a proper CAT(0) cube 
complex on which $G$ acts properly and cocompactly.  Let $H$, $H'$ be two 
convex subcomplexes in $\cuco X$ such that $\stabilizer_G(H)$ acts cocompactly on $H$ and $\stabilizer_G(H')$ acts 
cocompactly on $H'$. Then $\stabilizer_G(\gate_H(H'))$ is 
commensurable with $\stabilizer_G(H)\cap\stabilizer_G(H')$. Further, for any 
finite collection $H_1, \ldots, H_n$ of convex subcomplexes whose stabilisers 
act cocompactly, 
$\stabilizer_G(\gate_{H_1}(\gate_{H_2}(\cdots\gate_{H_{n-1}}(H_n)\cdots)))$ is 
commensurable with $\bigcap_{i=1}^n \stabilizer_G(H_i)$.\end{lem}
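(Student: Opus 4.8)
The plan is to reduce the general claim to the two-subcomplex case and then to invoke the two-subcomplex case itself, which will be the heart of the matter. First I would prove the statement for $n=2$: that $\stabilizer_G(\gate_H(H'))$ is commensurable with $\stabilizer_G(H)\cap\stabilizer_G(H')$. For the easy inclusion, observe that $\stabilizer_G(H)\cap\stabilizer_G(H')$ preserves $\gate_H(H')$ set-wise: if $g$ stabilises both $H$ and $H'$, then since the gate map is natural with respect to isometries, $g\cdot\gate_H(H')=\gate_{gH}(gH')=\gate_H(H')$. Hence $\stabilizer_G(H)\cap\stabilizer_G(H')\leq\stabilizer_G(\gate_H(H'))$, with no finite-index loss here. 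The content is the reverse containment up to finite index, and this is where properness and cocompactness of the $G$--action enter: I would use the fact that, by Lemma~\ref{lem:parallel_product}, $\gate_H(H')$ is parallel to $\gate_{H'}(H)$, so $Hull(\gate_H(H')\cup\gate_{H'}(H))\cong \gate_H(H')\times A$ for a convex subcomplex $A$ carrying exactly the hyperplanes separating the two gates. The group $\stabilizer_G(\gate_H(H'))$ may not literally preserve $\gate_{H'}(H)$ or $A$, but it coarsely preserves the product region: the family of subcomplexes parallel to $\gate_H(H')$ inside $\cuco X$, together with the cocompactness hypothesis, means that $\gate_H(H')$ has only finitely many "parallel copies" meeting any given bounded neighbourhood, so a finite-index subgroup of $\stabilizer_G(\gate_H(H'))$ stabilises $\gate_{H'}(H)$ as well, and hence stabilises both $H$ and $H'$ after passing to a further finite-index subgroup, using that $\stabilizer_G(\gate_H(H'))$ acts cocompactly on $\gate_H(H')$ (which follows from properness + cocompactness of the ambient action, since $\gate_H(H')$ is convex and $G$--cocompact convex subcomplexes have cocompact stabilisers).

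With the $n=2$ case in hand, I would induct on $n$. Write $P_k=\gate_{H_1}(\gate_{H_2}(\cdots\gate_{H_{k-1}}(H_k)\cdots))$, so $P_n$ is the subcomplex in question and $P_n=\gate_{H_1}(Q)$ where $Q=\gate_{H_2}(\cdots\gate_{H_{n-1}}(H_n)\cdots)$. By Lemma~\ref{lem:project_curry} (iterated), $\gate_{H_1}(Q)$ is parallel to an analogous iterated gate in which the outermost projection is onto a different $H_i$, and more importantly, the hyperplanes crossing $P_n$ are exactly the hyperplanes crossing all of $H_1,\ldots,H_n$ simultaneously; this is the repeated-application observation already used in the proof of Lemma~\ref{lem:project_curry}. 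So $P_n$ is parallel to $\gate_{H_1}(Q)$ with $Q$ a convex subcomplex whose stabiliser, by the inductive hypothesis, is commensurable with $\bigcap_{i=2}^n\stabilizer_G(H_i)$; in particular $\stabilizer_G(Q)$ acts cocompactly on $Q$ (a finite-index subgroup is an intersection of subgroups acting cocompactly on $G$--cocompact convex subcomplexes, hence acts cocompactly on the convex subcomplex $Q$). Now apply the $n=2$ case to the pair $H_1, Q$: $\stabilizer_G(P_n)=\stabilizer_G(\gate_{H_1}(Q))$ is commensurable with $\stabilizer_G(H_1)\cap\stabilizer_G(Q)$, which by induction is commensurable with $\stabilizer_G(H_1)\cap\bigcap_{i=2}^n\stabilizer_G(H_i)=\bigcap_{i=1}^n\stabilizer_G(H_i)$, as desired. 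One needs the trivial fact that if $A\sim A'$ and $B\sim B'$ (commensurable) then $A\cap B\sim A'\cap B'$, which holds for subgroups of any group.

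The main obstacle is the finite-index statement in the $n=2$ case — precisely, showing that $\stabilizer_G(\gate_H(H'))$ virtually preserves enough structure (the parallel copy $\gate_{H'}(H)$, equivalently the product region $Hull(\gate_H(H')\cup\gate_{H'}(H))$) to force it virtually into $\stabilizer_G(H)\cap\stabilizer_G(H')$. The cleanest route is probably: $\stabilizer_G(\gate_H(H'))$ acts cocompactly on $\gate_H(H')$; the set of convex subcomplexes parallel to $\gate_H(H')$ and arising as $\gate_{H'}(H)$-type projections forms a discrete, locally finite, $\stabilizer_G(\gate_H(H'))$--invariant family (discreteness/local finiteness from properness and the bounded diameter of $A$, which is bounded because $H,H'$ are $G$--cocompact so their gates onto each other are at bounded distance — this uses cocompactness essentially), so the stabiliser of $\gate_H(H')$ permutes this finite-type family and a finite-index subgroup fixes the particular element $\gate_{H'}(H)$, hence stabilises $H$ up to the parallelism relation, which after one more finite-index passage means it stabilises $H$ itself; symmetrically for $H'$. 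Getting the local finiteness / boundedness statement exactly right, and making sure "stabilises up to parallelism" upgrades to "stabilises, after finite index" (this is where one uses that there are only finitely many subcomplexes parallel to a given one within a bounded region, a consequence of properness), is the delicate point and should be stated as its own sublemma.
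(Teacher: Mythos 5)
Your proposal follows essentially the same route as the paper's proof: establish the easy containment $\stabilizer_G(H)\cap\stabilizer_G(H')\le\stabilizer_G(\gate_H(H'))$ by naturality of the gate map; use local finiteness of $\cuco X$ to show that only finitely many $G$--translates of $H'$ (and of $H$) can realise the fixed gate/projection data; and then pass to the finite-index kernel of the permutation action of $\stabilizer_G(\gate_H(H'))$ on those finite sets. The paper treats the $n>2$ case by saying ``a similar argument,'' whereas you carry out the induction via Lemma~\ref{lem:project_curry}; this is the same idea, just made explicit. One small inaccuracy in your write-up: the boundedness of the product factor $A$ (equivalently, the finiteness of $d(H,H')$) does not require cocompactness of the stabilisers --- it holds for any pair of nonempty convex subcomplexes, since $A$ carries exactly the finitely many hyperplanes separating the two gates. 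Cocompactness of $\stabilizer_G(H_i)$ is really only needed for the iterated claim, to guarantee that each intermediate gate has a well-behaved stabiliser, which is how the lemma is phrased.
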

\begin{proof} Let $H$ and $H'$ be two convex subcomplexes and suppose that $g\in\stabilizer_G(H)\cap\stabilizer_G(H')$. Then $g\in \stabilizer_G(\gate_H(H'))$, and thus $\stabilizer_G(H)\cap\stabilizer_G(H')\le\stabilizer_G(\gate_H(H'))$.
	
Let $d=d(H,H')$. In particular, for any $0$--cube in $\gate_H(H')$, its distance 
to $H'$ is exactly $d$. However, there are only finitely many such translates 
of $H'$ in $\cuco X$, and any element of $\stabilizer_G(\gate_H(H'))$ must 
permute these. Further, there are only finitely many translates of $H$ in 
$\cuco X$ that contain $\gate_H(H')$, and any element of the stabilizer must 
also permute those. Thus, there is a finite index subgroup (obtained as the 
kernel of the permutation action on the finite sets of hyperplanes) that 
stabilizes both $H$ and $H'$.  A similar argument covers the case of finitely many complexes.
\end{proof}

\begin{defn}[Orthogonal complement]\label{defn:orthocomp}
Let $A\subseteq\cuco X$ be a convex subcomplex.  Let $P_A$ be the
convex hull of the union of all parallel copies of $A$, so that
$P_A\cong A\times \orth A$, where $\orth A$ is a CAT(0) cube complex
that we call the \emph{abstract orthogonal complement of $A$ in
$\cuco X$}.  Let $\phi_A:A\times\orth A\to\cuco X$ be the cubical isometric embedding with image $P_A$. 

For any $a\in A^{(0)}$, the convex subcomplex
$\phi_A(\{a\}\times\orth A)$ is the \emph{orthogonal complement of $A$ at
$a$}.  See Figures~\ref{fig:orthocomp_1} and~\ref{fig:orthocomp_2}.
\end{defn}

\begin{figure}[h]
\begin{overpic}[width=0.15\textwidth]{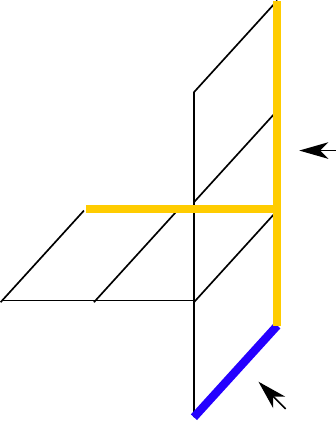}
\put(70,0){$e$}
\put(80,62){$\orth e$}
\end{overpic}

\caption{Combinatorial hyperplanes are orthogonal complements of $1$--cubes.}\label{fig:orthocomp_1}
\end{figure}

\begin{figure}[h]
\begin{overpic}[width=0.3\textwidth]{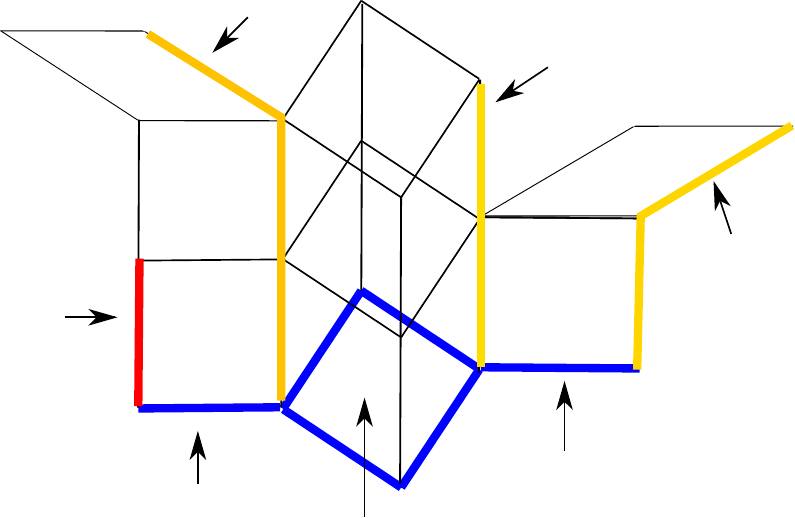}
\put(70,5){$e_2$}
\put(-35,24){$\orth{(e_1\cup s\cup e_2)}$}
\put(23,0){$e_1$}
\put(44,-3){$s$}
\put(69,56){$\orth s$}
\put(91,33){$\orth e_2$}
\put(32,63){$\orth e_1$}
\end{overpic}

\caption{Orthogonal complements of $1$--cubes $e_1,e_2$ and $2$--cube $s$ are 
shown.  Note that $\orth{(e_1\cup e_2\cup s)}\parallel\gate_{\orth 
e_2}(\gate_{\orth e_1}(\orth s))$.}\label{fig:orthocomp_2}
\end{figure}

\begin{lem}\label{lem:orthocomp}
Let $A\subseteq\cuco X$ be a convex subcomplex.  For any $a\in A$, a hyperplane $H$ intersects $\phi_A(\{a\}\times\orth A)$ if and only if $H$ is disjoint from every parallel copy of $A$ but intersects each hyperplane $V$ with $V\cap A\neq\emptyset$.  Hence $\phi_A(\{a\}\times \orth A),\phi_A(\{b\}\times\orth A)$ are parallel for all $a,b\in A^{(0)}$.
\end{lem}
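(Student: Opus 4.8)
The plan is to characterise directly the set of hyperplanes $H$ of $\cuco X$ that cross the slice $\phi_A(\{a\}\times\orth A)$, and then to observe that this characterisation is phrased entirely in terms of $A$ and makes no reference to $a$, which immediately yields the parallelism claim via Definition~\ref{defn:parallel}. The starting point is the product decomposition $P_A\cong A\times\orth A$ supplied by $\phi_A$. Since $P_A$ is convex, restriction gives a bijection between the hyperplanes of $\cuco X$ meeting $P_A$ and the hyperplanes of the product $A\times\orth A$, and each of the latter is either of the form $V_0\times\orth A$ with $V_0$ a hyperplane of $A$ (``$A$--type'') or of the form $A\times W$ with $W$ a hyperplane of $\orth A$ (``$\orth A$--type''). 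I would first record four elementary facts: (i) the $A$--type hyperplanes are exactly the hyperplanes $V$ with $V\cap A\neq\emptyset$; (ii) every $A$--type hyperplane crosses each ``horizontal slice'' $\phi_A(A\times\{b\})$, and each such slice is a parallel copy of $A$; (iii) every $\orth A$--type hyperplane crosses each ``vertical slice'' $\phi_A(\{a\}\times\orth A)$ and is disjoint from every horizontal slice; and (iv) in a product an $A$--type and an $\orth A$--type hyperplane always cross. These follow from the product structure together with the fact that a $0$--cube lies in no hyperplane; (ii) also uses that, by Definition~\ref{defn:parallel}, a parallel copy $A'$ of $A$ is crossed by exactly the hyperplanes crossing $A$. (The degenerate cases in which $A$ or $\orth A$ is a single $0$--cube are immediate and I would dispose of them first: if $\orth A$ is a point no hyperplane can satisfy both stated conditions, while if $A$ is a point then $\phi_A(\{a\}\times\orth A)=\cuco X$ and both conditions hold vacuously.)

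For the forward implication I would argue that if $H$ crosses $\phi_A(\{a\}\times\orth A)$ then $H$ crosses $P_A$ and meets a vertical slice, so $H$ is $\orth A$--type; by (iv) it then crosses every $A$--type hyperplane, i.e.\ every $V$ with $V\cap A\neq\emptyset$; and since every parallel copy $A'$ of $A$ lies in $P_A$ (by definition $P_A$ is the convex hull of the union of all parallel copies of $A$) and is crossed exactly by the $A$--type hyperplanes, the $\orth A$--type hyperplane $H$ is disjoint from $A'$. For the reverse implication, suppose $H$ is disjoint from every parallel copy of $A$ and crosses every hyperplane meeting $A$. Let $H^+,H^-$ be the two combinatorial hyperplanes associated to $H$ and set $A^{\pm}=\gate_{H^{\pm}}(A)$. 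By Lemma~\ref{lem:cubical_gate_map} the hyperplanes crossing $A^{\pm}$ are exactly those crossing both $H^{\pm}$ and $A$; since $H$ and each $H^{\pm}$ are crossed by exactly the same hyperplanes of $\cuco X$, and $H$ crosses every hyperplane meeting $A$, this set is precisely $\{V:V\cap A\neq\emptyset\}$, so $A^+$ and $A^-$ are parallel copies of $A$. As $A^{\pm}\subseteq H^{\pm}$, the hyperplane $H$ separates $A^+$ from $A^-$; in particular $A^+\neq A^-$ (otherwise $A^+\subseteq H^+\cap H^-=\emptyset$). Both $A^+,A^-$ lie in $P_A$, hence $Hull(A^+\cup A^-)\subseteq P_A$, and by Lemma~\ref{lem:parallel_product} every hyperplane separating $A^+$ from $A^-$ crosses $Hull(A^+\cup A^-)$; therefore $H$ crosses $P_A$. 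Finally $H$ is $A$--type or $\orth A$--type by (i), and it cannot be $A$--type, since by (ii) it would then cross the parallel copy $\phi_A(A\times\{b\})$ of $A$, contrary to hypothesis; so $H$ is $\orth A$--type and by (iii) crosses $\phi_A(\{a\}\times\orth A)$.

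Combining the two implications gives the claimed equivalence. Since its right-hand side does not mention $a$, it follows that for any $0$--cubes $a,b\in A$ a hyperplane crosses $\phi_A(\{a\}\times\orth A)$ if and only if it crosses $\phi_A(\{b\}\times\orth A)$, which by Definition~\ref{defn:parallel} says precisely that $\phi_A(\{a\}\times\orth A)$ and $\phi_A(\{b\}\times\orth A)$ are parallel. The step I expect to be the main obstacle is the reverse implication, specifically the passage from the two purely combinatorial hypotheses on $H$ to the conclusion that $H$ actually crosses $P_A$: this is the only place where one must manufacture a parallel copy of $A$ on each side of $H$ (via gates onto the combinatorial hyperplanes $H^{\pm}$) and then feed these into the parallel--product structure of Lemmas~\ref{lem:cubical_gate_map} and~\ref{lem:parallel_product}; everything else is bookkeeping with the product decomposition of $P_A$.
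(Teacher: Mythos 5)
Your proof is correct and follows the same essential strategy as the paper's: pass to the product decomposition $P_A\cong A\times\orth A$ and observe that the characterisation of which hyperplanes cross a vertical slice is independent of the slice. The paper's proof is a single terse paragraph that partitions the hyperplanes \emph{crossing $P_A$} into $A$--type and $\orth A$--type, and notes that any hyperplane of the latter type crosses every hyperplane of the former type because of the product structure; this is what you record as facts (i)--(iv). Where you go further is the reverse implication: the paper tacitly assumes that a hyperplane satisfying the two combinatorial conditions (disjoint from every parallel copy of $A$, crossing every hyperplane of $A$) must in fact cross $P_A$, and never argues it. You are right to flag this as the real content. Your argument via the gates $A^{\pm}=\gate_{H^{\pm}}(A)$ is exactly the right way to close it: Lemma~\ref{lem:cubical_gate_map} shows each $A^{\pm}$ is a parallel copy of $A$ (since $H$, hence each $H^{\pm}$, crosses every hyperplane of $A$), they lie on opposite sides of $H$, both sit inside $P_A$ by definition of $P_A$, and Lemma~\ref{lem:parallel_product} makes $H$ cross $Hull(A^+\cup A^-)\subseteq P_A$. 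Your handling of the degenerate cases and of $A^+\neq A^-$ (via $H^+\cap H^-=\emptyset$) is also correct. In short: same method, but you have supplied a justification for a step the paper asserts implicitly.
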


\begin{proof}
This follows from the definition of $P_A$: the hyperplanes crossing $P_A$ are partitioned into two classes, those intersecting $A$ (and its parallel copies) and those disjoint from $A$ (and any of its parallel copies).  By definition, $\phi_A(\{a\}\times\orth A)$ is the convex hull of the set of $0$--cubes of $P_A$ that are separated from $a$ only by hyperplanes of the latter type.  The product structure ensures that any hyperplane of the first type crosses every hyperplane of the second type.
\end{proof}

Finally, in \cite{CapraceSageev:rank}, Caprace and Sageev defined the notion of 
an essential hyperplane and an essential action. We record the necessary facts 
here.

\begin{defn}Let $\cuco X$ be a CAT(0) cube complex, and let $F\subseteq\cuco X$ be a convex subcomplex. Let $G\le\text{Aut}(\cuco X)$ preserve $F$.
	\begin{enumerate}
		\item We say that a hyperplane $H$ is \emph{essential} in $F$ if $H$ crosses $F$, and each halfspace associated to $H$ contains $0$--cubes of $F$ which are arbitrarily far from $H$.
		\item We say that $H$ is \emph{$G$--essential} in $F$ if for any $0$--cube $x\in F$, each halfspace 
associated to $H$ contains elements of $Gx$ arbitrarily far from $H$.
		\item We say that $G$ acts \emph{essentially} on $F$ if every hyperplane crossing $F$ is $G$--essential in $F$.
	\end{enumerate}
\end{defn}

\noindent If $G$ acts cocompactly on $F$, then a hyperplane is $G$--essential if and only if it is essential in $F$.

\begin{prop}\label{prop:CScore} Let $\cuco X$ be a proper CAT(0) cube complex admitting a proper cocompact action by a group $\Gamma$, let $F\subseteq \cuco X$ be a convex subcomplex, and let $G\le \Gamma$ act on $F$ cocompactly. Then:
\begin{enumerate}
	\item[(i)] there exists a $G$--invariant convex subcomplex 
$\widehat{F}_G$, called the $G$--essential core of $F$, crossed by 
every essential hyperplane in $F$, on which $G$ acts essentially and 
cocompactly;
	\item[(ii)] $\widehat{F}_G$ is unbounded if and only if $F$ is unbounded;
	\item[(iii)] if $G'\le\text{Aut}(\cuco X)$ also acts on $F$ cocompactly, then $\widehat{F}_{G'}$ is parallel to $\widehat{F}_G$;
	\item[(iv)] if $G'\le G$ is a finite-index subgroup, we can take $\widehat{F}_{G'}=\widehat{F}_G$.
	\item[(v)] the subcomplex $\widehat{F}_G$ is finite Hausdorff distance from $F$.
	\end{enumerate}
\end{prop}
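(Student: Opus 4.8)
The plan is to deduce the entire statement from the essential core construction of Caprace--Sageev \cite{CapraceSageev:rank}, applied not to $\cuco X$ but to the convex subcomplex $F$. First I would record that $F$, being convex in $\cuco X$, is itself a CAT(0) cube complex, and that it is finite--dimensional: $\cuco X$ is, since a proper complex carrying a cocompact action has only finitely many orbits of cubes, and $F$ inherits this bound. The group $G$ acts on $F$ cocompactly by hypothesis, so the input needed for \cite{CapraceSageev:rank} is present, and I take $\widehat{F}_G$ to be the essential core of $F$ for the $G$--action. By \cite{CapraceSageev:rank}, $\widehat{F}_G$ is a nonempty $G$--invariant convex subcomplex on which $G$ acts essentially, and --- because the $G$--action on $F$ is cocompact --- cocompactly; moreover the essential core retains exactly the $G$--essential hyperplanes of $F$. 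Since $G$ acts cocompactly on $F$, a hyperplane of $F$ is $G$--essential if and only if it is essential in the intrinsic sense, so $\widehat{F}_G$ is crossed by precisely the hyperplanes essential in $F$. This establishes (i).

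For (v) I would use cocompactness directly. Pick a compact subcomplex $K\subseteq F$ with $G\cdot K=F$ (the union of a representative cube from each of the finitely many orbits, together with its faces). Then $\dist_{\cuco X}(x,\widehat{F}_G)$ is bounded, by some $R$, as $x$ ranges over the finitely many $0$--cubes of $K$; and for an arbitrary $0$--cube $y=gx$ of $F$ with $x$ a $0$--cube of $K$, the $G$--invariance of $\widehat{F}_G$ gives $\dist_{\cuco X}(y,\widehat{F}_G)=\dist_{\cuco X}(x,\widehat{F}_G)\le R$. Since $\widehat{F}_G\subseteq F$, the Hausdorff distance between them is at most $R$, which is (v). Statement (ii) is then immediate, because being at finite Hausdorff distance preserves boundedness and unboundedness; alternatively it is recorded directly in \cite{CapraceSageev:rank}.

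For (iii), note that if $G'\le\Aut(\cuco X)$ also acts cocompactly on $F$, then ``$G$--essential'', ``essential'', and ``$G'$--essential'' all coincide for hyperplanes of $F$; hence, as in the proof of (i), both $\widehat{F}_G$ and $\widehat{F}_{G'}$ are crossed by precisely the hyperplanes essential in $F$, and therefore are parallel by Definition~\ref{defn:parallel}. Finally, for (iv): if $G'\le G$ has finite index, then $\widehat{F}_G$ is $G'$--invariant and convex, $G'$ still acts cocompactly on it, and $G'$ acts essentially on it --- any hyperplane crossing $\widehat{F}_G$ is $G$--essential in $\widehat{F}_G$, hence essential there by cocompactness of the $G$--action, hence $G'$--essential there by cocompactness of the $G'$--action --- and it is crossed by every essential hyperplane of $F$; so $\widehat{F}_G$ satisfies all the requirements that (i) imposes on $\widehat{F}_{G'}$, and we may take $\widehat{F}_{G'}=\widehat{F}_G$. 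The only genuinely non--formal point, and hence the ``main obstacle'', is the appeal to \cite{CapraceSageev:rank} in the first paragraph: that the essential core of $F$ is nonempty, $G$--cocompact, and crossed by exactly the $G$--essential hyperplanes. Once that is in hand, parts (ii)--(v) are bookkeeping.
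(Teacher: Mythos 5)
Your proof is correct and follows essentially the same route as the paper's: both apply the Caprace--Sageev essential core construction to $F$ equipped with the cocompact $G$--action, and then derive (ii)--(v) from the recorded properties of $\widehat{F}_G$ together with the equivalence, under cocompactness, of ``essential'' and ``$G$--essential'' for hyperplanes. The only minor difference is that you re-derive the finite Hausdorff distance claim (v) directly from cocompactness, whereas the paper simply cites it as already part of the Caprace--Sageev conclusion.
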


\begin{proof}
By \cite[Proposition 3.5]{CapraceSageev:rank}, $F$ contains a $G$--invariant convex subcomplex $\widehat F_G$ on which $G$ acts essentially and cocompactly (in particular, $\widehat F_G$ is unbounded if and only if $F$ is, and $\dist_{Haus}(F,\widehat F_G)<\infty$).  The hyperplanes of $F$ crossing $\widehat F_G$ are precisely the $G$--essential hyperplanes.  Observe that if $H$ is a hyperplane crossing $F$ essentially, then cocompactness of the $G$--action on $F$ implies that $H$ is $G$--essential and thus crosses $\widehat F_G$.  It follows that if $G,G'$ both act on $F$ cocompactly, then a hyperplane crossing $F$ is $G$--essential if and only if it is $G'$--essential, so $\widehat F_G,\widehat F_{G'}$ cross the same hyperplanes, i.e. they are parallel.  If $G'\le G$, then $\widehat F_G$ is $G'$--invariant, and if $[G:G']<\infty$, then $G'$ also acts cocompactly on $F$, so we can take $\widehat F_G=\widehat F_{G'}$.
\end{proof}

\subsection{Hyperclosure and factor systems}\label{subsec:hyperclosure}

\begin{defn}[Factor system, hyperclosure]\label{defn:factor_system}
The \emph{hyperclosure} of $\cuco X$ is the intersection $\mathfrak F$ of all sets $\mathfrak F'$ of convex subcomplexes of $\cuco X$ that satisfy the following three properties:
\begin{enumerate}
 \item \label{item:hyper_1} $\cuco X\in\mathfrak F'$, and for all combinatorial hyperplanes $H$ of $\cuco X$, we have $H\in\mathfrak F'$;
 \item\label{item:hyper_2} if $F,F'\in\mathfrak F'$, then $\gate_F(F')\in\mathfrak F'$;
 \item\label{item:hyper_3} if $F\in\mathfrak F'$ and $F'$ is parallel to $F$, then $F'\in\mathfrak F'$.
\end{enumerate}
Note that $\mathfrak F$ is $\Aut(\cuco X)$--invariant.  If there exists $\xi$ such that for all $x\in\cuco X$, there are at most $\xi$ elements $F\in\mathfrak F$ with $x\in F$, then, following~\cite{BHS:HHS_I}, we call $\mathfrak F$ a \emph{factor system} for $\cuco X$.
\end{defn}

\begin{rem}\label{rem:fs}
The definition of a factor system in~\cite{BHS:HHS_I} is more general than the 
definition given above.  The assertion that $\cuco X$ has a factor system in the 
sense of~\cite{BHS:HHS_I} is equivalent to the assertion that the hyperclosure 
of $\cuco X$ has finite multiplicity, because any factor system (in the sense 
of~\cite{BHS:HHS_I}) contains all elements of $\mathfrak F$ whose diameters 
exceed a given fixed threshold.  Each of the five conditions in Definition 8.1 
of~\cite{BHS:HHS_I} is satisfied by $\mathfrak F$, except possibly the finite 
multiplicity condition, Definition 8.1.(3).  Indeed, parts (1),(2),(4) of that 
definition are included in Definition~\ref{defn:factor_system} above.  Part (5) 
asserts that there is a constant $p$ so that $\gate_{F}(F')$ is in the factor 
system provided $F,F'$ are and $\diam(\gate_F(F'))\geq p$.  Hence 
Definition~\ref{defn:factor_system}.\eqref{item:hyper_2} implies that this 
condition is satisfied by $\mathfrak F$, with $p=0$.  
\end{rem}

\section{Analysis of the hyperclosure}\label{sec:decomposing}
Fix a proper $\cuco X$ with a group $G$ acting properly and cocompactly.  Let $\mathfrak F$ be the hyperclosure.

\subsection{Decomposition}\label{subsec:decomp}
Let $\mathfrak F_0=\{\cuco X\}$ and, for each $n\geq1$, let $\mathfrak F_n$ be the subset of $\mathfrak F$ consisting of those subcomplexes that can be written in the form $\gate_H(F)$, where $F\in\mathfrak F_{n-1}$ and $H$ is a combinatorial hyperplane.  Hence $\mathfrak F_1$ is the set of combinatorial hyperplanes in $\cuco X$.

\begin{lem}[Decomposing $\mathfrak F$]\label{lem:breakup}
Each $F\in\mathfrak F-\{\cuco X\}$ is parallel to a subcomplex of the form $$\gate_{H_1}(\gate_{H_2}(\cdots\gate_{H_{n-1}}(H_n)\cdots))$$ for some $n\geq1$, where each $H_i$ is a combinatorial hyperplane, i.e. $\mathfrak F/_\parallel=\left(\cup_{n\geq1}\mathfrak F_n\right)/_\parallel$.
\end{lem}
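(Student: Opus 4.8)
The plan is to show that the set $\mathfrak F' := \{\cuco X\} \cup \left\{F : F \parallel \gate_{H_1}(\gate_{H_2}(\cdots\gate_{H_{n-1}}(H_n)\cdots)),\ n\geq 1,\ H_i \text{ combinatorial hyperplanes}\right\}$ already satisfies the three defining properties of the hyperclosure in Definition~\ref{defn:factor_system}, and hence contains $\mathfrak F$; since $\mathfrak F' \subseteq \mathfrak F$ is clear (each iterated gate of combinatorial hyperplanes lies in $\mathfrak F$ by properties~\eqref{item:hyper_1} and~\eqref{item:hyper_2}, and $\mathfrak F$ is parallelism-closed by~\eqref{item:hyper_3}), this gives $\mathfrak F = \mathfrak F'$, which is exactly the claim. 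Properties~\eqref{item:hyper_1} (containing $\cuco X$ and all combinatorial hyperplanes, the latter being the length-one chains) and~\eqref{item:hyper_3} (closure under parallelism) are immediate from the way $\mathfrak F'$ is written. So the entire content is property~\eqref{item:hyper_2}: closure under gate projection.

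The heart of the argument is therefore to prove: if $F \parallel \gate_{H_1}(\cdots(H_n)\cdots)$ and $F' \parallel \gate_{H_1'}(\cdots(H_m')\cdots)$, then $\gate_F(F')$ is again parallel to such an iterated gate. First I would observe that gates only depend on parallelism classes: if $A \parallel A'$ then $\gate_A(B) \parallel \gate_{A'}(B)$ and $\gate_B(A) \parallel \gate_B(A')$ (both follow from Lemma~\ref{lem:cubical_gate_map}, since the hyperplanes crossing a gate depend only on which hyperplanes cross each factor). So it suffices to handle $\gate_{\gate_{H_1}(\cdots(H_n)\cdots)}(\gate_{H_1'}(\cdots(H_m')\cdots))$. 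Then the key tool is Lemma~\ref{lem:project_curry}, which lets one rearrange and combine nested gates up to parallelism: applying it repeatedly should let me absorb $\gate_{H_1}(\cdots(H_n)\cdots)$ as the outermost projection and peel off the inner structure, rewriting the whole expression (up to parallelism) as $\gate_{H_1}(\gate_{H_2}(\cdots \gate_{H_{n-1}}(\gate_{H_n}(\gate_{H_1'}(\cdots(H_m')\cdots)))\cdots))$, i.e. an iterated gate of the combined list $H_1,\dots,H_n,H_1',\dots,H_m'$ of combinatorial hyperplanes — which is exactly an element of $\mathfrak F'$ of chain length $n+m$. An induction on $n+m$ (or on $n$) using Lemma~\ref{lem:project_curry} as the inductive step is the cleanest packaging.

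I expect the main obstacle to be the bookkeeping in the induction: Lemma~\ref{lem:project_curry} is stated for three convex subcomplexes $C,D,E$ and says $\gate_{\gate_C(D)}(E) \parallel \gate_C(\gate_D(E)) \parallel \gate_C(\gate_E(D))$, so one must carefully choose, at each stage, which subcomplex plays the role of $C$, $D$, $E$ so that the nesting collapses in the right direction, and one must track that parallelism is preserved through each rewrite (using the first observation above that gates respect parallelism classes on both arguments). A mild subtlety is that the intermediate expressions need not literally be gates of combinatorial hyperplanes but only parallel to such; since $\mathfrak F'$ is defined up to parallelism this causes no real trouble, but it should be stated explicitly. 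Once property~\eqref{item:hyper_2} is verified, the minimality of $\mathfrak F$ forces $\mathfrak F \subseteq \mathfrak F'$, and combined with the trivial reverse inclusion we get $\mathfrak F/_\parallel = \left(\bigcup_{n\geq 1}\mathfrak F_n\right)/_\parallel$ as claimed; note $\mathfrak F_n$ is precisely the parallelism classes of length-$n$ iterated gates by unwinding the recursive definition of $\mathfrak F_n$.
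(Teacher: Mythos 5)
Your proposal is correct and follows essentially the same route as the paper: the paper's proof is just the one-line remark ``This follows by induction, Lemma~\ref{lem:project_curry}, and the definition of $\mathfrak F$,'' and your argument is exactly the natural unpacking of that — verify the three hyperclosure axioms for the set of (parallelism classes of) iterated gates, with closure under projection established by induction on chain length using Lemma~\ref{lem:project_curry} together with the observation that gates respect parallelism (via Lemma~\ref{lem:cubical_gate_map}), and then invoke minimality of $\mathfrak F$.
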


\begin{proof}
This follows by induction, Lemma~\ref{lem:project_curry}, and the definition of $\mathfrak F$.
\end{proof}

\begin{cor}\label{cor:decompose}
$\mathfrak F=\cup_{n\geq0}\mathfrak F_n$.
\end{cor}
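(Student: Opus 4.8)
The plan is to prove the two inclusions of Corollary~\ref{cor:decompose} separately: the containment $\bigcup_{n\geq0}\mathfrak F_n\subseteq\mathfrak F$ is routine, while the reverse containment reduces, via Lemma~\ref{lem:breakup}, to showing that $\bigcup_{n\geq0}\mathfrak F_n$ is closed under parallelism.

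First, $\bigcup_{n\geq0}\mathfrak F_n\subseteq\mathfrak F$ follows by induction on $n$: we have $\mathfrak F_0=\{\cuco X\}\subseteq\mathfrak F$ and $\mathfrak F_1\subseteq\mathfrak F$ by Definition~\ref{defn:factor_system}.\eqref{item:hyper_1}, since $\mathfrak F_1$ is exactly the set of combinatorial hyperplanes; and if $\mathfrak F_{n-1}\subseteq\mathfrak F$, then for $F\in\mathfrak F_{n-1}$ and $H$ a combinatorial hyperplane, $\gate_H(F)\in\mathfrak F$ by Definition~\ref{defn:factor_system}.\eqref{item:hyper_2}, so $\mathfrak F_n\subseteq\mathfrak F$.

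For $\mathfrak F\subseteq\bigcup_{n\geq0}\mathfrak F_n$, Lemma~\ref{lem:breakup} shows each $F\in\mathfrak F$ is either $\cuco X\in\mathfrak F_0$ or parallel to a subcomplex of the form $\gate_{H_1}(\cdots\gate_{H_{n-1}}(H_n)\cdots)$, and building it up one gate at a time (innermost first) shows such a subcomplex lies in $\mathfrak F_n$. So it suffices to prove that if $F\in\mathfrak F_n$ and $F'$ is a convex subcomplex parallel to $F$, then $F'\in\bigcup_{m\geq0}\mathfrak F_m$. I would argue by induction on the number $k$ of hyperplanes separating $F$ from $F'$; this is finite, being by Lemma~\ref{lem:parallel_product} the number of hyperplanes crossing the complex $A$ with $Hull(F\cup F')\cong F\times A$. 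If $k=0$ then $F=F'$. If $k\geq1$, pick a shortest combinatorial geodesic in $A$ realizing the product decomposition, let $V$ be the hyperplane dual to its first edge, and let $V^{+}$ be the combinatorial hyperplane of $V$ on the $F'$-side. The crucial point is that $F$ is parallel into $V^{+}$: every hyperplane crossing $F$ also crosses $F'$, and since $F,F'$ lie on opposite sides of $V$, any hyperplane crossing both must cross $V$, hence cross $V^{+}$ (a combinatorial hyperplane of $V$ being crossed by exactly the hyperplanes crossing $V$, by Remark~\ref{rem:comb_hyp}). Hence $\gate_{V^{+}}(F)$ is crossed by precisely the hyperplanes crossing $F$, i.e.\ it is a parallel copy of $F$ --- concretely, the copy obtained by pushing $F$ one step across $V$ inside $F\times A$. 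Since $V^{+}$ is a combinatorial hyperplane and $F\in\mathfrak F_n$, we get $\gate_{V^{+}}(F)\in\mathfrak F_{n+1}$; and this copy is parallel to $F'$ but separated from $F'$ by only $k-1$ hyperplanes, so the inductive hypothesis gives $F'\in\bigcup_m\mathfrak F_m$. Combined with the first paragraph, $\mathfrak F=\bigcup_{n\geq0}\mathfrak F_n$.

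I expect the main obstacle to be exactly this closure under parallelism --- equivalently, the statement that a parallel copy of an iterated gate of combinatorial hyperplanes is again one. The heart of it is the observation that a hyperplane $V$ separating $F$ from its parallel copy is automatically crossed by every hyperplane meeting $F$, so that $F$ is parallel into the combinatorial hyperplane $V^{+}$ and the elementary push of $F$ across $V$ is realized as a gate onto a combinatorial hyperplane, raising the grade by one. One must take a little care to confirm that $\gate_{V^{+}}(F)$ really is this pushed copy and really is separated from $F'$ by strictly fewer hyperplanes, which is where the product decomposition $Hull(F\cup F')\cong F\times A$ of Lemma~\ref{lem:parallel_product} enters.
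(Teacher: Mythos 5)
Your proof is correct, and it takes a genuinely different route from the paper's. Both proofs reduce to Lemma~\ref{lem:breakup}, which shows $F$ is parallel to some iterated gate $\gate_{H_1}(\cdots\gate_{H_{n-1}}(H_n)\cdots)\in\mathfrak F_n$, so both must bridge the gap between ``parallel to an element of $\bigcup\mathfrak F_n$'' and ``equal to one.'' The paper bridges it in a single step: it observes that $F\in\mathfrak F-\{\cuco X\}$ lies in some combinatorial hyperplane $H'$, and asserts $\gate_{H'}(\gate_H(F'))=F$, landing $F$ in $\bigcup_{k\leq n+1}\mathfrak F_k$. You instead isolate the cleaner statement that $\bigcup_n\mathfrak F_n$ is closed under parallelism, and prove it by induction on the number $k$ of hyperplanes separating $F$ from its parallel copy $F'$: pushing $F$ across one separating hyperplane $V$ is realized as $\gate_{V^+}(F)\in\mathfrak F_{n+1}$, because $F$ is automatically parallel into $V^{+}$ (every hyperplane meeting $F$ meets both $F$ and $F'$, hence crosses $V$), and the pushed copy $F\times\{a_0'\}$ is separated from $F'$ by exactly $k-1$ hyperplanes. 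Your argument is more step-by-step and its correctness is easier to audit --- the paper's assertion that the single gate $\gate_{H'}(\gate_H(F'))$ recovers $F$ on the nose is quite terse, and hinges on choosing $H'$ carefully relative to the separating hyperplanes; your inductive push across one hyperplane at a time sidesteps that subtlety entirely. The trade-off is grade control: the paper places $F$ in grade $\leq n+1$, whereas your induction only gives grade $\leq n+k$; this is harmless here because the statement of the corollary does not involve grades, only membership in $\bigcup_n\mathfrak F_n$.
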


\begin{proof}
It suffices to show $\mathfrak F\subseteq\cup_{n\geq0}\mathfrak F_n$.  Let $F\in\mathfrak F$. If $F=\cuco X$, then $F\in\mathfrak F_0$.  Otherwise, by Lemma~\ref{lem:breakup}, there exists $n\geq 1$, a combinatorial hyperplane $H$, and a convex subcomplex $F'\in\cup_{k\leq n}\mathfrak F_k$ with $F\parallel\gate_H(F')$.  Consider $\phi_F(P_F\cong F\times\orth F)$ and choose $f\in\orth F$ so that $\phi_F(F\times\{f\})$ coincides with $F$.  Then $\phi_F(F\times \{f\})$ lies in some combinatorial hyperplane $H'$ -- either $H'=H$ and $F=\gate_H(F)$, or $F$ is non-unique in its parallelism class, so lies in a combinatorial hyperplane in the carrier of a hyperplane crossing $\orth F$.  Consider $\gate_{H'}(\gate_H(F'))$.  On one hand, $\gate_{H'}(\gate_H(F'))\in\cup_{k\leq n+1}\mathfrak F_k$.  But $\gate_{H'}(\gate_H(F'))=F$.  Hence $F\in\cup_{n\geq1}\mathfrak F_n$.
\end{proof}

\subsection{Stabilizers act cocompactly}\label{subsec:cocompact}
The goal of this subsection is to prove that $\stabilizer_G(F)$ acts cocompactly on $F$ for each $F\in\mathfrak F$.  The following lemma is standard but we include a proof in the interest of a self-contained exposition.

\begin{lem}[Coboundedness from finite multiplicity]\label{lem:134} Let $X$ be a metric space and let $G\to\Isom(X)$  act cocompactly, and let
$\mathcal Y$ be a $G$--invariant collection of
subspaces such that every ball intersects finitely many elements of $\mathcal Y$. Then $\stabilizer_G(P)$ acts coboundedly on $P$ for every
$P\in\mathcal Y$.\end{lem}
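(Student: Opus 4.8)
The plan is to combine cocompactness of the $G$--action with the finite-multiplicity hypothesis on $\mathcal Y$ to exhibit a single bounded subset of $P$ whose $\stabilizer_G(P)$--orbit is all of $P$. Fix a basepoint $p_0\in P$. Since $G$ acts cocompactly, there is a bounded subset of $X$ whose $G$--orbit is $X$; enlarging it, we may take a closed ball $B=B(p_0,R)$ with $R<\infty$ and $G\cdot B=X$. By the hypothesis on $\mathcal Y$, only finitely many of its members meet $B$; call them $P_1,\dots,P_m$. For each index $i$ with $P_i\in G\cdot P$, fix once and for all some $g_i\in G$ with $g_iP=P_i$.

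Now let $p\in P$ be arbitrary. Choose $g\in G$ with $gp\in B$. Then $gP\in\mathcal Y$ by $G$--invariance of $\mathcal Y$, and $gP$ meets $B$ since it contains $gp$; hence $gP=P_i$ for some $i$, and in particular $P_i\in G\cdot P$, so $g_i$ has been chosen. From $gP=g_iP$ we get $s:=g_i^{-1}g\in\stabilizer_G(P)$, and $sp=g_i^{-1}(gp)\in P$ because $s$ preserves $P$. Since $gp\in B(p_0,R)$, the point $sp=g_i^{-1}(gp)$ lies in $g_i^{-1}B(p_0,R)=B(g_i^{-1}p_0,R)$, so
\[
d(p_0,sp)\le d(p_0,g_i^{-1}p_0)+R\le R',\qquad R':=R+\max_j d(p_0,g_j^{-1}p_0)<\infty,
\]
the maximum ranging over the finite set of chosen indices $j$. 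Thus $sp\in P\cap B(p_0,R')$, and since $p=s^{-1}(sp)$ with $s^{-1}\in\stabilizer_G(P)$, we obtain $p\in\stabilizer_G(P)\cdot\bigl(P\cap B(p_0,R')\bigr)$.

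As $p$ was arbitrary, $P=\stabilizer_G(P)\cdot\bigl(P\cap B(p_0,R')\bigr)$, and $P\cap B(p_0,R')$ is a bounded subset of $P$; this is exactly the assertion that $\stabilizer_G(P)$ acts coboundedly on $P$. The one substantive point is the claim in the middle paragraph that $gP=P_i$: finite multiplicity forces every element $g$ carrying $p$ into $B$ to lie in one of finitely many left cosets of $\stabilizer_G(P)$ (indexed by those $P_i$ in the $G$--orbit of $P$), and this is what bounds the resulting displacement of $p_0$ uniformly in $p$. Everything else is routine bookkeeping with balls and isometries; in particular no properness of $X$ or of the $G$--action is used.
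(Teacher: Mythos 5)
Your proof is correct and follows essentially the same strategy as the paper's: use cocompactness to push an arbitrary $p\in P$ into a fixed ball, then invoke the finite-multiplicity hypothesis to see that the translating element lies in one of finitely many cosets of $\stabilizer_G(P)$, giving a uniform bound on the resulting displacement of the basepoint. Your choice of basepoint $p_0\in P$ (rather than a generic $r\in X$) streamlines the bookkeeping compared to the paper's version, but the underlying idea is the same.
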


\begin{proof}
Let $P\in\mathcal Y$, choose a basepoint $r\in X$, and use
cocompactness to choose $t<\infty$ so that $\dist(x,G\cdot r)\leq t$
for all $x\in X$.  Choose $g_1,\ldots,g_{s}\in G$ so that the
$G$--translates of $P$ intersecting $\neb_{10t}(r)$ are exactly
$g_iP$ for $i\leq s$. Since $\mathcal Y$ is $G$--invariant and
locally finite, $s<\infty$.   (In other words, the assumptions guarantee that there are finitely many cosets of 
$\stabilizer_G(P)$ whose corresponding translates of $P$ intersect $\neb_{10t}(r)$, and we have fixed a representative of 
each of these cosets.)  Let $K_r=\max_{i\leq s}\dist(r,g_ir)$.
For each $g\in G$, the translates of $P$ that lie within distance
$10t$ of $g\cdot r$ are precisely $gg_1P,\ldots,gg_sP$.  Letting $K_{gr}=\max_{i\leq s}\dist(gr,gg_ir)$, we have 
$K_{gr}=K_r$ just because $\dist(r,g_i\cdot r)=\dist(g\cdot r,gg_i\cdot
r)$.

Fix a basepoint $p\in P$ and let $q\in P$ be an arbitrary point;
choose $h_p,h_q\in G$ so that $\dist(h_p\cdot r,p)\leq
t,\dist(h_q\cdot r,q)\leq t$.  Without loss of generality, we may
assume that $h_q=1$. Then $\{h_pg_iP\}_{i=1}^s$ is the set of
$P$--translates intersecting $\neb_{10t}(h_p\cdot r)$.  Now, $p\in
P$ and $\dist(h_p\cdot r,p)<10t$, so there exists $i$ so that
$h_pg_iP=P$, i.e. $h_pg_i\in\stabilizer_G(P)$. Finally,
$$\dist(h_pg_i\cdot q,p)\leq\dist(h_pg_i\cdot r,h_pg_i\cdot q)+\dist(h_p\cdot r,p)+\dist(h_pg_i\cdot r,h_p\cdot 
r)\leq2t+K_{h_pr}=2t+K_r,$$
which is uniformly bounded.  Hence the action of $\stabilizer_G(P)$
on $P$ is cobounded.
\end{proof}

\begin{rem}We use Lemma~\ref{lem:134} when $X$ and $P$ are proper, to get a cocompact action.\end{rem}

\begin{lem}\label{lem:cocompact_parallel} Let $\cuco X$ be a proper CAT(0) cube
complex with a group $G$ acting cocompactly. Let $Y, Y'\subset\cuco X$ be parallel convex subcomplexes, then
$\stabilizer_G(Y)$ and $\stabilizer_G(Y')$ are commensurable. Thus,
if $\stabilizer_G(Y)$ acts cocompactly on $Y$, then
$\stabilizer_G(Y)\cap\stabilizer_G(Y')$ acts cocompactly on
$Y'$.\end{lem}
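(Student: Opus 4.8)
The plan is to derive commensurability from Lemma~\ref{lem:stab_proj} once the relevant gates are identified, and then to transfer cocompactness from $Y$ to $Y'$ across the product region supplied by Lemma~\ref{lem:parallel_product}.  We may assume $Y\neq Y'$, since otherwise there is nothing to prove.

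First I would check that $\gate_Y(Y')=Y$ and, symmetrically, $\gate_{Y'}(Y)=Y'$.  By Lemma~\ref{lem:cubical_gate_map} the hyperplanes crossing $\gate_Y(Y')$ are exactly those crossing both $Y$ and $Y'$; since $Y\parallel Y'$ these are precisely the hyperplanes crossing $Y$, so the convex subcomplex $\gate_Y(Y')\subseteq Y$ is crossed by every hyperplane crossing $Y$.  A proper convex subcomplex of $Y$ is, however, disjoint from some hyperplane crossing $Y$ (take a $0$--cube $v$ outside it, its gate $w$ onto it, and a hyperplane separating $v$ from $w$), so $\gate_Y(Y')=Y$.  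Now Lemma~\ref{lem:stab_proj} applied with $H=Y$, $H'=Y'$ shows that $\stabilizer_G(Y)=\stabilizer_G(\gate_Y(Y'))$ is commensurable with $\stabilizer_G(Y)\cap\stabilizer_G(Y')$, and the symmetric application shows $\stabilizer_G(Y')$ is commensurable with $\stabilizer_G(Y)\cap\stabilizer_G(Y')$.  Hence $\stabilizer_G(Y)\cap\stabilizer_G(Y')$ has finite index in both $\stabilizer_G(Y)$ and $\stabilizer_G(Y')$, so these two groups are commensurable.

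For the final assertion, set $K=\stabilizer_G(Y)\cap\stabilizer_G(Y')$; by the previous paragraph $K$ has finite index in $\stabilizer_G(Y)$, so if $\stabilizer_G(Y)$ acts cocompactly on $Y$ then so does $K$.  Since $K$ preserves both $Y$ and $Y'$ it preserves $Hull(Y\cup Y')$, which by Lemma~\ref{lem:parallel_product} is isomorphic to a product $Y\times A$; as the hyperplanes crossing $A$ are exactly those separating $Y$ from $Y'$, both $Y$ and $Y'$ are crossed by precisely the hyperplanes of the $Y$--factor, hence are slices $Y\times\{a_0\}$ and $Y\times\{a_1\}$.  The set of hyperplanes crossing $Y$ coincides with the set of ``vertical'' hyperplanes of $Y\times A$ and is preserved by $K$, so $K$ acts on $Y\times A$ diagonally through automorphisms of the two factors, and it fixes $a_0$ and $a_1$ because it preserves the corresponding slices.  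Therefore the identity-on-$Y$ map $Y=Y\times\{a_0\}\to Y\times\{a_1\}=Y'$ is $K$--equivariant, so the $K$--action on $Y'$ is conjugate to the (cocompact) $K$--action on $Y$ and is in particular cocompact.

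The argument is largely formal; the one point that needs care is in the last step: one must see that $K$ cannot interchange the two factors of $Hull(Y\cup Y')$ — it cannot, since it preserves the subcomplex $Y$, equivalently the partition of the hyperplanes of $Hull(Y\cup Y')$ into those crossing $Y$ and those separating $Y$ from $Y'$ — and that $K$ fixes the $A$--coordinates of the slices $Y$ and $Y'$.  Granting this, cocompactness of the $K$--action on the slice $Y$ immediately gives cocompactness on the slice $Y'$.
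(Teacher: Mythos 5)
Your proof is correct, but the commensurability half takes a genuinely different route from the paper's.  The paper never invokes Lemma~\ref{lem:stab_proj}: instead it notes that every $\stabilizer_G(Y)$--translate $gY'$ is parallel to $gY=Y$ at the fixed distance $\dist(Y,Y')$, so the set $T$ of such translates corresponds to a bounded subset of the locally finite complex $\orth Y$ and is therefore finite; the kernel $K$ of the permutation action on $T$ then has finite index in $\stabilizer_G(Y)$ and lies in $\stabilizer_G(Y)\cap\stabilizer_G(Y')$.  Your observation that $Y\parallel Y'$ forces $\gate_Y(Y')=Y$ is a nice way to feed the situation into Lemma~\ref{lem:stab_proj} directly, and the symmetric application cleanly gives commensurability; this is arguably shorter once Lemma~\ref{lem:stab_proj} is in hand.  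One technical caveat worth flagging: Lemma~\ref{lem:stab_proj} is stated under the hypothesis that $G$ acts \emph{properly} and cocompactly, whereas Lemma~\ref{lem:cocompact_parallel} assumes only a cocompact action on a proper $\cuco X$.  In the special case you need the extra hypothesis is harmless — when $Y\parallel Y'$ the relevant translates $gY'$, $g\in\stabilizer_G(Y)$, are all parallel to $Y$, so the finiteness claims in Lemma~\ref{lem:stab_proj}'s proof reduce to local finiteness of $\orth Y$ exactly as in the paper's direct argument — but as written your proof is citing a lemma whose stated hypotheses are stronger than those you are given, and it would be safer to either note this reduction explicitly or adopt the paper's direct count of translates.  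Your cocompactness transfer is essentially the paper's, just made more explicit: where the paper observes that $K$ acts cocompactly on $Hull(Y\cup Y')\cong Y\times A$ and preserves the slice $Y'$, you spell out that $K$ respects the cubical product decomposition (since it preserves the partition of hyperplanes into those crossing $Y$ and those separating $Y$ from $Y'$), fixes the $A$--coordinates $a_0,a_1$ of the two slices, and hence the parallelism map $Y\to Y'$ is $K$--equivariant.  That is a clean way to see it.
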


\begin{proof}
Let $T$ be the set of $\stabilizer_G(Y)$--translates of $Y'$.  Then each $gY'\in T$ is parallel to $Y$, and $\dist_{\cuco X}(gY',Y)=\dist_{\cuco X}(Y',Y)$.  Since $\orth Y$ is locally finite, $|T|<\infty$.  Hence $K=\kernel(\stabilizer_G(Y)\to\symmetric(T))$ has finite index in $\stabilizer_G(Y)$ but lies in $\stabilizer_G(Y)\cap\stabilizer_G(Y')$.  By Lemma~\ref{lem:parallel_product}, $K$ acts cocompactly on $Hull(Y\cup Y')$, stabilizing $Y'$, and thus acts cocompactly on $Y'$.
\end{proof}

\begin{defn}\label{defn:FNHK}
Let $H\in\mathfrak F_1$.  For $n\geq1,k\geq0$, let $\mathfrak F_{n,H,k}$ be the set of $F\in\mathfrak F_n$ so that $F=\gate_H(F')$ for some $F'\in\mathfrak F_{n-1}$ with $\dist(H,F')\leq k$. Let $\mathfrak F_{n,H}=\cup_{k\geq0}\mathfrak F_{n,H,k}$ and $\mathfrak F_{n,k}=\cup_{H\in\mathfrak F_1}\mathfrak F_{n,H,k}$.
\end{defn}

\begin{prop}[Cocompactness]\label{prop:f_n_properties}
Let $n\geq 1$. Then, for any $F\in\mathfrak F_n$, $\stabilizer_G(F)$
acts cocompactly on $F$.  Hence $\stabilizer_G(F)$ acts cocompactly on $F$ for each $F\in\mathfrak F$.
\end{prop}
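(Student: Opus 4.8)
The plan is to induct on $n$, in each step feeding a $G$--invariant, locally finite family of convex subcomplexes into Lemma~\ref{lem:134} to deduce coboundedness — hence, by properness, cocompactness — of a stabiliser action. The base of the induction is $\mathfrak F_0=\{\cuco X\}$, where $\stabilizer_G(\cuco X)=G$ acts cocompactly by hypothesis.

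For the inductive step, fix $n\geq1$ and $F\in\mathfrak F_n$, and write $F=\gate_H(F')$ with $H\in\mathfrak F_1$ and $F'\in\mathfrak F_{n-1}$ (so $F'$ is proper, being either $\cuco X$ or a convex subcomplex of it). Set $k=\dist(H,F')$ and $F''=\gate_{F'}(H)$; by Lemma~\ref{lem:parallel_product}, $F''\parallel F$. By the inductive hypothesis, $K:=\stabilizer_G(F')$ acts cocompactly on $F'$. Consider the $K$--invariant family $\mathcal Y=\{gF'':g\in K\}$ of convex subcomplexes of $F'$. For $g\in K$ we have $gF''=\gate_{F'}(gH)$ and $\dist(gH,F')=\dist(gH,gF')=\dist(H,F')=k$, and the product description in Lemma~\ref{lem:parallel_product} shows that every $0$--cube of $gF''$ lies within distance $k$ of $gH$. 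Hence if $gF''$ meets a ball $B_R(p)\subseteq F'$, then $gH$ meets the ball $B_{R+k}(p)$ in $\cuco X$; properness of $\cuco X$ bounds the number of such combinatorial hyperplanes $gH$, hence the number of distinct $gF''$, so $\mathcal Y$ is locally finite in $F'$. Now Lemma~\ref{lem:134} (with $X=F'$ and group $K$, both $F'$ and $F''$ proper) gives that $K\cap\stabilizer_G(F'')$, a fortiori $\stabilizer_G(F'')$, acts cocompactly on $F''$. Finally, since $F''\parallel F$, Lemma~\ref{lem:cocompact_parallel} yields that $\stabilizer_G(F'')\cap\stabilizer_G(F)$ acts cocompactly on $F$, hence so does $\stabilizer_G(F)$. (When $n=1$ one has $F'=\cuco X$, $k=0$, and $F''=H=F$, so this recovers directly that hyperplane stabilisers act cocompactly on combinatorial hyperplanes.) The last sentence of the proposition is immediate from Corollary~\ref{cor:decompose}.

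The main obstacle is the local finiteness of $\mathcal Y$ inside $F'$: one must rule out accumulation of the $\stabilizer_G(F')$--translates of $F''$, even though $\mathfrak F_n$ itself need not be locally finite in $\cuco X$ — proving such global local finiteness is exactly the conjecture motivating the paper. What rescues the argument is that each translate in $\mathcal Y$ is the gate in $F'$ of a combinatorial hyperplane at the \emph{fixed} distance $k$ from $F'$, so the product structure of $Hull(F''\cup\gate_H(F'))$ pins the gate within distance $k$ of that hyperplane, after which properness of $\cuco X$ does the rest.
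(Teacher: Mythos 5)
Your proof is correct, and it takes a genuinely cleaner route than the paper's. The paper argues by \emph{double} induction on $(n,k)$, where $k=\dist(H,F')$ as in Definition~\ref{defn:FNHK}: one inductive step (the $k=0$ case) feeds the $K$--translates of $H\cap F'$ into Lemma~\ref{lem:134}, and a second inductive step descends in $k$ by passing from $F$ to a parallel copy $F_1\subset\gate_{H'}(F')$ with $\dist(H',F')=k-1$, tracking pairs of translated combinatorial hyperplanes $(\ell H,\ell H')$ to get local finiteness. You instead do a single induction on $n$, and the key move is to project in the opposite direction: working with $F''=\gate_{F'}(H)$ rather than $\gate_H(F')$, the family $\mathcal Y=\{gF'':g\in K\}$ lives inside $F'$, and the product decomposition of Lemma~\ref{lem:parallel_product} forces each $gF''$ to lie in the $k$--neighbourhood of the combinatorial hyperplane $gH$, where $k$ is now a \emph{fixed} constant. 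Local finiteness of $\mathcal Y$ in $F'$ is then immediate from local finiteness of combinatorial hyperplanes in $\cuco X$, for every $k$ at once; Lemma~\ref{lem:134} and Lemma~\ref{lem:cocompact_parallel} finish exactly as in the paper's $k=0$ step. So your observation that the translated gate is pinned within distance $k$ of the translated hyperplane collapses the paper's entire second inductive step. The paper's approach is not wrong, just heavier; your version is the one I'd prefer.
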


\begin{proof}
The second assertion follows from the first and Corollary~\ref{cor:decompose}.  We argue by double induction on $n,k$ to prove the first assertion, with $k$ as in Definition~\ref{defn:FNHK}.  First, observe that $\mathfrak F_n$, $\mathfrak F_{n,k}$ are $G$--invariant for all
$n,k$. Similarly, $\mathfrak F_{n,H,k}$ is
$\stabilizer_G(H)$--invariant for all $H\in\mathfrak F_1$.

\textbf{Base Case: $n=1$.} From local
finiteness of $\cuco X$, cocompactness of the action of $G$ and
Lemma~\ref{lem:134}, we see that $\stabilizer_G(H)$ acts cocompactly on $H$ for each $H\in\mathfrak F_1$.

\textbf{Inductive Step 1: $(n,k)$ for all $k$ implies $(n+1,0)$.}
Let $F\in\mathfrak F_{n+1,0}$.  Then $F=H\cap F'$, where $H\in\mathfrak F_1$ and 
$F'\in\mathfrak F_n$.  By definition, $F'=\gate_{H'}(F'')$ for some 
$F''\in\mathfrak F_{n-1}$ and $H'\in\mathfrak F_1$.  Thus $K=\stabilizer_G(F')$ 
acts cocompactly on $F'$ by induction.

Let $\mathcal S=\{k(H\cap F'):k\in K\}$, which is a $K$--invariant set of convex subcomplexes of $F'$.  Moreover, since the set of all $K$--translates of $H$ is a locally finite collection, because $\cuco X$ is locally finite and $H$ is a combinatorial hyperplane, $\mathcal S$ has the property that every ball in $F'$ intersects finitely many elements of $\mathcal S$. Lemma~\ref{lem:134}, applied to the cocompact action of $K$ on $F'$, shows that $\stabilizer_K(H\cap F')$ (which equals $\stabilizer_K(F)$), and hence $\stabilizer_G(F)$, acts cocompactly on $F$.

\textbf{Inductive Step 2: $(n,k)$ implies $(n,k+1)$.}  Let $F\in\mathfrak F_{n,k+1}$ so that $F=\gate_H(F')$  with
$H\in\mathfrak F_1$, $F'\in\mathfrak F_{n-1}$ and $d=\dist(H,F')\le
k+1$. If $d\le k$, induction applies. Thus, we can assume that
$d=k+1$. Then there is a product region $F\times[0,d]\subset \cuco
X$ with $F\times\{0\}=F$, and $F\times\{d\}\subset F'$.  Now, $F_1:=F\times\{1\}$ is a parallel copy of $F$ contained in the 
carrier of the hyperplane $H''$ dual to the edge $[0,1]$ of $[0,d]$.  Letting $H'$ be the combinatorial hyperplane parallel 
to $H''$ in $\neb(H'')$ and separated from $F$ by $H''$, we have  $F_1\subset\gate_{H'}(F')$.  Moreover, $d(H',F')\leq 
d-1=k$.  By induction
$L=\stabilizer_G(\gate_{H'}(F'))$ acts cocompactly on
$\gate_{H'}(F')$.

We claim that $F_1=\gate_{H'}(F')\cap\gate_{H'}(H)$. To see this,
note that the hyperplanes that cross $F_1$ are exactly the
hyperplanes that cross $F'$ and $H$. However, those are the
hyperplanes which cross $H'$ and $F'$ which also cross $H$. It easily follows that
the two subcomplexes are equal.

Now let $\mathcal T$ be the set of $L$--translates of
$F_1=\gate_{H'}(F')\cap\gate_{H'}(H)$ in $\gate_{H'}(F')$.  This is
an $L$--invariant collection of convex subcomplexes of
$\gate_{H'}(F')$.  Moreover, each ball in $\gate_{H'}(F')$
intersects finitely many elements of $\mathcal T$. Indeed, $\mathcal
T$ is a collection of subcomplexes of the form $T_\ell=\gate_{\ell
H'}(\ell H)\cap\gate_{H'}(F')$, where $\ell\in L$.  Recall that
$\dist_{\cuco X}(H,H')=1$.  Hence, fixing $y\in\gate_{H'}(F')$ and
$t\geq0$, if $\{T_{\ell_i}\}_{i\in I}\subseteq\mathcal T$ is a
collection of elements of $\mathcal T$, all of which intersect
$\neb_t(y)$, then $\{\ell_iH,\ell_iH'\}_{i\in I}$ all intersect
$\neb_{t+1}(y)$. However, by local finiteness of $\cuco X$ there are
only finitely many distinct $\ell_iH, \ell_i H'$. Further, if
$\ell_i H=\ell_j H$ and $\ell_i H'=\ell_j H'$, then
$T_{\ell_i}=T_{\ell_j}$. Thus, the index set $I$ must be finite.
Hence, by Lemma~\ref{lem:134} and cocompactness of the action of $L$
on $\gate_{H'}(F')$, we see (as in Inductive Step 1) that
$\stabilizer_G(F_1)$ acts cocompactly on $F_1$.  Now, since $F_1$ is
parallel to $F$, we see by Lemma~\ref{lem:cocompact_parallel} that
$\stabilizer_G(F)$ acts cocompactly on $F$.
\end{proof}

The next Lemma explains how to turn the algebraic conditions on the $G$--action described in Section~\ref{sec:uniform} into geometric properties of the convex subcomplexes in $\mathfrak F$. This is of independent interest, giving a complete algebraic characterization of when two cocompact subcomplexes have parallel essential cores.

\begin{lem}[Characterization of commensurable stabilizers]\label{lem:comm_stab} Let $Y_1$ and $Y_2$ be two convex subcomplexes of $\cuco X$ and let $G_i=\stabilizer_G(Y_i)$. Suppose further that $G_i$ acts on $Y_i$ cocompactly. Then $G_1$ and $G_2$ are commensurable if and only of the $G_1$--essential core $\widehat{Y_1}$ and the $G_2$--essential core $\widehat{Y_2}$ are parallel.\end{lem}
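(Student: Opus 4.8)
The plan is to prove the two implications separately, using the essential cores together with Lemma~\ref{lem:cocompact_parallel} and Proposition~\ref{prop:CScore}. Write $\widehat{Y_i}=\widehat{(Y_i)}_{G_i}$ for the $G_i$--essential core of $Y_i$, and recall that $G$ acts properly on the proper space $\cuco X$, so every subgroup acts properly and every convex subcomplex is a proper metric space.

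First I would handle the direction \emph{parallel cores $\Rightarrow$ commensurable stabilisers}. Suppose $\widehat{Y_1}\parallel\widehat{Y_2}$. By Proposition~\ref{prop:CScore}(i), $G_i$ acts cocompactly on $\widehat{Y_i}$, and hence so does the possibly larger group $\stabilizer_G(\widehat{Y_i})\supseteq G_i$ (a quotient of a cocompact action is cocompact). Covering $\widehat{Y_i}$ by finitely many $G_i$--translates of a ball and using properness of the $\stabilizer_G(\widehat{Y_i})$--action, one obtains $[\stabilizer_G(\widehat{Y_i}):G_i]<\infty$. Meanwhile the first sentence of Lemma~\ref{lem:cocompact_parallel} says $\stabilizer_G(\widehat{Y_1})$ and $\stabilizer_G(\widehat{Y_2})$ are commensurable. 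Composing the two finite-index inclusions with this commensurability shows that $G_1$ and $G_2$ are commensurable.

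For the converse I would set $G_0=G_1\cap G_2$, which has finite index in each $G_i$ and therefore acts cocompactly on each $Y_i$. Fixing $y_i\in Y_i$, cocompactness puts $G_0 y_i$ within finite Hausdorff distance of $Y_i$, while $\dist_{\cuco X}(g y_1,g y_2)=\dist_{\cuco X}(y_1,y_2)$ for all $g\in G_0$ shows $G_0 y_1$ and $G_0 y_2$ are within Hausdorff distance $\dist_{\cuco X}(y_1,y_2)$ of one another; hence $Y_1$ and $Y_2$ are at finite Hausdorff distance, and then so are $\widehat{Y_1}$ and $\widehat{Y_2}$ by Proposition~\ref{prop:CScore}(v). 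It now suffices to show that the same hyperplanes cross $\widehat{Y_1}$ and $\widehat{Y_2}$, which by Definition~\ref{defn:parallel} is exactly $\widehat{Y_1}\parallel\widehat{Y_2}$. The crux is an elementary observation about cube complexes: \emph{if $A,B$ are convex subcomplexes with $\dist_{Haus}(A,B)<\infty$ and $H$ is a hyperplane essential in $A$, then $H$ crosses $B$}. Indeed, if $H$ did not cross $B$, then $B$ would lie in one closed halfspace of $H$, bounded by a combinatorial hyperplane $H'$ in the sense of Definition~\ref{defn:hyperplane}; taking $0$--cubes $a_n\in A$ on the opposite side of $H$ with $\dist_{\cuco X}(a_n,H)\to\infty$ (possible since $H$ is essential in $A$), every $0$--cube of $B$ is separated from $a_n$ by all the hyperplanes separating $a_n$ from $H'$, so $\dist_{\cuco X}(a_n,B)\to\infty$, contradicting $\dist_{Haus}(A,B)<\infty$. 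Applying this with $A=\widehat{Y_1}$, $B=\widehat{Y_2}$ and symmetrically — and using that $G_i$ acts \emph{essentially} as well as cocompactly on $\widehat{Y_i}$, so that every hyperplane crossing $\widehat{Y_i}$ is essential in $\widehat{Y_i}$ — we conclude that every hyperplane crossing $\widehat{Y_1}$ crosses $\widehat{Y_2}$ and conversely, i.e. $\widehat{Y_1}\parallel\widehat{Y_2}$.

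I expect the geometric observation about essential hyperplanes and finite Hausdorff distance to be the only genuinely non-formal step; everything else is bookkeeping with Lemma~\ref{lem:cocompact_parallel}, Proposition~\ref{prop:CScore}, and the standard fact that a subgroup acting cocompactly on a proper space has finite index in any overgroup acting properly there.
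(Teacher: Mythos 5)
Your forward direction (\emph{parallel cores $\Rightarrow$ commensurable stabilisers}) is the same as the paper's: both combine Lemma~\ref{lem:cocompact_parallel} with the observation that $G_i$ has finite index in $\stabilizer_G(\widehat{Y_i})$. For the converse, you and the paper both begin identically, using cocompactness of $G_1\cap G_2$ on both $Y_i$ to put $Y_1,Y_2$, and then $\widehat{Y_1},\widehat{Y_2}$, at finite Hausdorff distance. After that point the routes diverge. The paper argues by contradiction via the Caprace--Sageev machinery: assuming some hyperplane $H$ crosses $\widehat{Y_1}$ but not $\widehat{Y_2}$, it invokes the double-skewering lemma to produce a hyperbolic isometry $g\in G_1$ skewering $H$, pushes $\widehat{Y_2}$ across $H$ with high powers of $g$, and concludes that $\langle g\rangle\cap G_2$ is trivial, contradicting commensurability since $g$ has infinite order. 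Your argument is more elementary and avoids rank-rigidity entirely: you observe that an essential hyperplane of a convex subcomplex $A$ must cross any convex subcomplex $B$ at finite Hausdorff distance from $A$ (otherwise points of $A$ deep in the far halfspace would be separated from all of $B$ by an unbounded number of hyperplanes), and then use Proposition~\ref{prop:CScore}(i) to note that every hyperplane crossing $\widehat{Y_i}$ is essential there because $G_i$ acts essentially and cocompactly. That directly forces the two cores to be crossed by the same hyperplanes, i.e.\ to be parallel, with no appeal to hyperbolic isometries or skewering. Your version is actually cleaner: it isolates a purely geometric fact (essential hyperplanes are detected by finite Hausdorff distance) and dispenses with the group-theoretic contradiction, whereas the paper's skewering step requires some care to extract the stated contradiction. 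The trade-off is that the paper's approach exhibits the infinite-order element witnessing the failure of commensurability, which could be useful in other contexts. Both are correct.
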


\begin{proof} First, if $\widehat{Y_1},\widehat{Y_2}$ are parallel, then 
Lemma~\ref{lem:cocompact_parallel} shows that $\stabilizer_G(\widehat{Y_1}), 
\stabilizer_G(\widehat{Y_2})$ contain $\stabilizer_G(\widehat{Y_1})\cap 
\stabilizer_G(\widehat{Y_2})$ as a finite-index subgroup.  Since 
$\stabilizer_G(\widehat{Y_i})$ contains $G_i$ as a finite-index subgroup, it 
follows that $G_1\cap G_2$ has finite index in $G_1$ and in $G_2$.  
	
Conversely, suppose that $G_1,G_2$ have a common finite-index subgroup. Thus, 
$G_1\cap G_2$ acts cocompactly on both $Y_1$ and $Y_2$.  This implies that 
$Y_1,Y_2$ lie at finite Hausdorff distance, since choosing $r>0$ and $y_i\in 
Y_i$ so that $(G_1\cap G_2)B_r(y_i)=Y_i$, we see that $Y_1$ is in the $d(y_1, 
y_2)+r$ neighbourhood of $Y_2$, and vice-versa. Further, this implies that 
$\widehat{Y_1},\widehat{Y_2}$ lie at finite Hausdorff distance, since 
$\widehat{Y_i}$ is finite Hausdorff distance from $Y_i$. 

Suppose that $\widehat{Y_1},\widehat{Y_2}$ are not parallel.  Then, without loss 
of generality, some hyperplane $H$ of $\cuco X$ crosses $\widehat{Y_1}$ but not 
$\widehat{Y_2}$.  Since $G_1$ acts on $\widehat{Y_1}$ essentially and 
cocompactly,~\cite{CapraceSageev:rank} provides a hyperbolic isometry $g\in G_1$ 
of $\widehat{Y_1}$ so that $g\overleftarrow H\subsetneq\overleftarrow H$, where 
$\overleftarrow H$ is the halfspace of $\cuco X$ associated to $H$ and disjoint 
from $Y_2$.  Choosing $n>0$ so that the translation length of $g^n$ exceeds the 
distance from $Y_2$ to the point in which some $g$--axis intersects $H$, we see 
that $H$ cannot separate $g^n\widehat Y_2$ from the axis of $g$. Thus, 
$g^n\widehat F'\cap\overleftarrow H\neq\emptyset$, whence $\langle g\rangle\cap 
G_2=\{1\}$, contradicting that $G_1$ and $G_2$ are commensurable (since $g$ has 
infinite order).  Thus $\widehat Y_1,\widehat Y_2$ are parallel.
\end{proof}

\subsection{Ascending or descending chains}\label{subsec:ascending_chains}
We reduce Theorem~\ref{thmi:main} to a claim about chains in 
$\mathfrak F$.

\begin{lem}[Finding chains]\label{lem:chain_find}
Let $\mathfrak U\subseteq\mathfrak F$ be an infinite subset satisfying 
$\bigcap_{U\in\mathfrak U}U\ni x$ for some $x\in\cuco X$.  Then one of the 
following holds:
\begin{itemize}
     \item there exists a sequence $\{F_i\}_{i\geq1}$ in $\mathfrak F$ so 
that $x\in F_i\subsetneq F_{i+1}$ for all $i$;
\item there exists a sequence $\{F_i\}_{i\geq1}$ in $\mathfrak F$ so 
that $x\in F_i$ and $F_i\supsetneq F_{i+1}$ for all $i$.
\end{itemize}
\end{lem}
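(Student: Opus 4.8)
The plan is to run a projection--and--descent argument and, in the one case it does not resolve at once, to induct on $\dim\cuco X$. Write $\mathcal W(A)$ for the set of hyperplanes crossing a convex subcomplex $A$. By Lemma~\ref{lem:parallel_product}, two parallel convex subcomplexes sharing a $0$--cube coincide (the connecting product region is then a point); together with Lemma~\ref{lem:cubical_gate_map} and Definition~\ref{defn:parallel} this yields the criterion we use repeatedly: \emph{if $A,B$ are convex subcomplexes with a common $0$--cube, then $A\subseteq B$ iff $\mathcal W(A)\subseteq\mathcal W(B)$} --- indeed, if $\mathcal W(A)\subseteq\mathcal W(B)$ then $\gate_B(A)$ crosses exactly $\mathcal W(A)$, so $\gate_B(A)\parallel A$, and as they share the common $0$--cube, $A=\gate_B(A)\subseteq B$. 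We may assume $x$ is a $0$--cube and, deleting it from $\mathfrak U$ if present, that $\cuco X\notin\mathfrak U$. Fix an enumeration $\mathfrak U=\{U_n\}_{n\geq1}$, set $V_1=U_1$ and $V_{n+1}=\gate_{V_n}(U_{n+1})$. Then each $V_n$ lies in $\mathfrak F$ (closure of $\mathfrak F$ under projections, Definition~\ref{defn:factor_system}.\eqref{item:hyper_2}), contains $x$ (the gate map fixes points of its target), and $V_{n+1}\subseteq V_n$. If $(V_n)_n$ does not stabilise, chaining its strict inclusions gives an infinite strictly descending chain in $\mathfrak F$ through $x$ --- the second alternative --- and we are done.

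So suppose $(V_n)_n$ stabilises; we still have infinitely many \emph{distinct} $U_n\in\mathfrak F\setminus\{\cuco X\}$, each containing $x$. The geometric input I would use is that every $F\in\mathfrak F\setminus\{\cuco X\}$ containing $x$ is itself contained in a combinatorial hyperplane through $x$: by Lemma~\ref{lem:breakup}, $F$ is parallel into a combinatorial hyperplane, so its orthogonal complement (Definition~\ref{defn:orthocomp}) is non-trivial, and hence the orthogonal complement of $F$ at $x$ (which contains $x$) has a $1$--cube $e$ incident to $x$; if $W$ is the hyperplane dual to $e$, then $x\in\neb(W)$ and, by Lemma~\ref{lem:orthocomp}, $\mathcal W(F)\subseteq\mathcal W(W)$, so the combinatorial hyperplane $H$ of $W$ containing $x$ has $\mathcal W(F)\subseteq\mathcal W(H)$ (the hyperplanes crossing $W$ are exactly those crossing its combinatorial hyperplanes), whence $F\subseteq H$ by the criterion above. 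Since $\cuco X$ is proper, $x$ is incident to finitely many $1$--cubes, hence lies on finitely many combinatorial hyperplanes, so by the pigeonhole principle we pass to a subsequence with all $U_n$ contained in a single combinatorial hyperplane $H\ni x$, and then --- the $U_n$ being distinct --- with $U_n\subsetneq H$ for all $n$. Now I would induct on $\dim\cuco X$ (the inductive hypothesis being the conclusion of the lemma for every proper CAT(0) cube complex of smaller dimension carrying a proper cocompact action). Since $\dim H<\dim\cuco X$ and $\stabilizer_G(H)$ acts properly and cocompactly on $H$ by Proposition~\ref{prop:f_n_properties} (as $H\in\mathfrak F_1$), applying the inductive hypothesis to $(H,\stabilizer_G(H),x,\{U_n\})$ would produce a monotone chain through $x$ in the hyperclosure of $H$, and this is the desired chain in $\mathfrak F$. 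The base case $\dim\cuco X=0$ is vacuous, since then $\cuco X$ is a point and $\mathfrak F$ a singleton.

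The main obstacle is making this inductive step legitimate, i.e.\ relating the hyperclosure of $H$ to $\mathfrak F$. One direction is routine: each combinatorial hyperplane of $H$ is parallel to $\gate_H(V^{\pm})$ for a hyperplane $V$ of $\cuco X$ crossing $H$, and gates and parallelism of subcomplexes of $H$ agree whether computed in $H$ or in $\cuco X$, so $\mathfrak F\cap\{F:F\subseteq H\}$ satisfies the defining properties of the hyperclosure of $H$, giving the inclusion ``hyperclosure of $H\subseteq\mathfrak F$''. The delicate point is the reverse: one needs the \emph{particular} $U_n$ to lie in the hyperclosure of $H$, not merely in $\mathfrak F\cap\{F:F\subseteq H\}$, and this is not automatic --- the hyperclosure is defined by minimality, and an element of $\mathfrak F$ contained in $H$ need not be obtainable from the combinatorial hyperplanes of $H$ by projections and parallelism, in particular need not be parallel, \emph{within $H$}, into any combinatorial hyperplane of $H$. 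When this ``full--in--$H$'' phenomenon occurs the recursion stalls and one must instead extract the chain at that stage by a separate argument making further use of the proper cocompact action (e.g.\ via cocompactness of $\stabilizer_G(U_n)$ on $U_n$, Proposition~\ref{prop:f_n_properties}); I expect this is exactly where the essential difficulty lies.
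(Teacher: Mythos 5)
Your approach is genuinely different from the paper's, and its first half is correct and attractive: the iterated gating $V_{n+1}=\gate_{V_n}(U_{n+1})$ producing a descending chain when it does not stabilise, the criterion for containment via a shared $0$--cube and crossing sets, and the observation that every $F\in\mathfrak F\setminus\{\cuco X\}$ through $x$ lies inside a combinatorial hyperplane through $x$ are all valid. The dimension induction, however, breaks exactly where you flag it: to apply the inductive hypothesis to the pair $(H,\stabilizer_G(H))$ you need the infinitely many $U_n\subsetneq H$ to belong to the \emph{intrinsic} hyperclosure of $H$ (the set obtained from $H$ and its own combinatorial hyperplanes by projection and parallelism inside $H$), and you only establish the inclusion of that set into $\mathfrak F$, not the converse. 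Theorem~\ref{thm:factor_system_compact_orthocomp} writes $U_n$ as the orthogonal complement of a compact convex $C_n$, but the $C_n$ produced by the proof of that theorem contains a $1$--cube transverse to $H$, so $C_n\not\subseteq H$; and replacing $C_n$ by $\gate_H(C_n)$ need not recover $U_n$ as an orthogonal complement taken inside $H$, since a hyperplane of $H$ can cross every hyperplane crossing both $H$ and $C_n$ while failing to cross some hyperplane that crosses $C_n$ but misses $H$. So, as you concede, the proposal is incomplete at precisely this step, and I do not see a cheap repair.

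The paper avoids the issue by never passing to a subcomplex. It builds the directed containment graph $\Omega$ on $\mathfrak F_x$, with $\{x\}\in\mathfrak F$ as the unique source vertex, and uses Proposition~\ref{prop:f_n_properties} together with a count of ``diplomatic'' $0$--cubes of $F$ (those joined by a $1$--cube of some $F_i$ to $F_i-F$, where the $F_i$ are the immediate successors of $F$) to show that the immediate successors of $F$ form finitely many $\stabilizer_G(F)$--orbits. Grading the analogous graph $\widehat\Omega$ over $G\cdot x$ then gives a graded directed graph with $\widehat\Omega/G$ locally finite, and K\"onig's lemma produces either a directed ray, hence the ascending chain, or finiteness of $\widehat\Omega^{(0)}/G$, which contradicts properness together with the assumption that $\mathfrak U$ is infinite. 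The argument stays entirely inside $\cuco X$ and uses the $G$--action globally rather than descending to hyperplane stabilisers, which is how it sidesteps the hyperclosure-comparison problem you ran into.
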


\begin{proof}
Let $\mathfrak F_x\supseteq\mathfrak U$ be the set of $F\in\mathfrak F$ with 
$x\in F$.  Let $\Omega$ be the directed graph with vertex set $\mathfrak F_x$, 
with $(F,F')$ a directed edge if $F\subsetneq F'$ and there does not exist 
$F''\in\mathfrak F_x$ with $F\subsetneq F''\subsetneq F'$.

Let $F_0=\{x\}$.  Since $x$ is the intersection of the finitely many hyperplane 
carriers containing it, $F_0\in\mathfrak F$ and in particular $F_0\in\mathfrak 
F_x$.  Moreover, note that $F_0$ has no incoming $\Omega$--edges, since $F_0$ 
cannot properly contain any other subcomplex.  For any $F\in\mathfrak F_x$, 
either $\Omega$ contains an edge from $F_0$ to $F$, or there exists 
$F'\in\mathfrak F_x$ such that $F_0\subset F'\subset F$.

Hence either $\mathfrak F_x$ contains an infinite ascending or descending 
$\subseteq$--chain, or $\Omega$ is a connected directed graph in which every non-minimal vertex as an immediate predecessor, and every non-maximal vertex has an immediate successor. In the first two 
cases, we are done, so assume that the third holds. In the third case, there is 
a unique vertex namely $F_0$, with no incoming edges and there is a finite 
length directed path from $F_0$ to any vertex.

Let $F\in\mathfrak F_x$ and suppose that $\{F_i\}_i$ is the set of vertices of $\Omega$ so that $(F,F_i)$ is an edge.  For $i\neq j$, we have $F\subseteq F_i\cap F_j\subsetneq F_i$, so since $F_i\cap F_j=\gate_{F_i}(F_j)\in\mathfrak F$, we have $F_i\cap F_j=F$.

The set $\{F_i\}_i$ is invariant under the action of $\stabilizer_G(F)$.  Also, 
by Proposition~\ref{prop:f_n_properties}, $\stabilizer_G(F)$ acts cocompactly on 
$F$.  A $0$--cube $y\in F$ is \emph{diplomatic} if there exists $i$ so that $y$ 
is joined to a vertex of $F_i-F$ by a $1$--cube in $F_i$.  Only uniformly 
finitely many $F_i$ can witness the diplomacy of $y$ since $\cuco X$ is 
uniformly locally finite and $F_i\cap F_j=F$ whenever $i\neq j$.  Also, $y$ is 
diplomatic, witnessed by $F_{i_1},\ldots,F_{i_k}$, if and only if $gy$ is 
diplomatic, witnessed by $gF_{i_1},\ldots,gF_{i_k}$, for each 
$g\in\stabilizer_G(F)$.  Since $\stabilizer_G(F)\curvearrowright F$ cocompactly, 
we thus get $|\{F_i\}_i/\stabilizer_G(F)|<\infty$.

% \begin{figure}[h]
% \begin{overpic}[width=0.3\textwidth]{}
% \put(40,22){$F$}
% \put(80,22){$F_j$}
% \put(22,2){$F_i$}
% \put(63,35){$y$}
% \end{overpic}
% \caption{The $0$--cube $y$ is diplomatic: incident (bold) $1$--cubes ``reach out'' into $F_i,F_j$.}\label{fig:diplomatic}
% \end{figure}

Let $\widehat\Omega$ be the graph with a vertex for each $F\in\mathfrak F$ containing a point of $G\cdot x$ and a directed edge for minimal containment as above.
Then $\widehat\Omega$ is a graded directed graph as above.  For each $n\geq0$, let $\mathcal S_n$ be the set of vertices in $\widehat\Omega$ at distance $n$ from a
minimal element.  The above argument shows that $G$ acts cofinitely on each 
$\mathcal S_n$, and thus $\widehat\Omega/G$ is locally finite.  Hence, by 
K\"{o}nig's infinity lemma, either $\widehat\Omega/G$ contains a directed ray or 
$\widehat\Omega^{(0)}/G$ is finite. In the former case, $\widehat{\Omega}$ must 
contain a directed ray, in which case there exists $\{F_i\}\subseteq \mathfrak 
F$ with $F_i\subsetneq F_{i+1}$ for all $i$. Up to translating by an appropriate 
element of $G$, we can assume that $x\in F_1$. The latter case means that the 
set of $F\in\mathfrak F$ such that $F\cap G\cdot x\neq\emptyset$ is $G$--finite. 
 But since $G$ acts properly and cocompactly on $\cuco X$, any $G$--invariant 
$G$--finite collection of subcomplexes whose stabilizers act cocompactly has 
finite multiplicity, a contradiction.  
\end{proof}

\section{Orthogonal complements of compact sets and the 
hyperclosure}\label{sec:orthocomp}
We now characterise $\mathfrak F$ in a CAT(0) cube 
complex $\cuco X$, without making use of a group action.

\begin{lem}\label{lem:contravariant}
Let $A\subseteq B\subseteq\cuco X$ be convex subcomplexes and let $a\in A$. Then
$\phi_B(\{a\}\times\orth B)\subseteq\phi_A(\{a\}\times\orth A)$.
\end{lem}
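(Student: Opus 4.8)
The plan is to prove this by showing that every hyperplane crossing $\phi_B(\{a\}\times\orth B)$ also crosses $\phi_A(\{a\}\times\orth A)$, and that the two subcomplexes share the vertex $a$; since both are convex subcomplexes containing $a$, the containment of hyperplane sets will yield the containment of subcomplexes. (Two convex subcomplexes $C\subseteq C'$ iff they share a vertex and the hyperplanes separating a pair of vertices of $C$ all separate vertices of $C'$; more simply, $C$ is parallel into $C'$ and they share a vertex, hence $C\subseteq C'$ by convexity, as used repeatedly in Section~\ref{sec:orthocomp}.) Note $a\in A\subseteq B$, and $\phi_A(\{a\}\times\orth A)$, $\phi_B(\{a\}\times\orth B)$ each contain $a$ by construction, so the basepoint issue is immediate.

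The heart of the matter is the hyperplane computation, for which I would invoke Lemma~\ref{lem:orthocomp}. By that lemma, a hyperplane $H$ crosses $\phi_B(\{a\}\times\orth B)$ if and only if (i) $H$ is disjoint from every parallel copy of $B$, and (ii) $H$ crosses every hyperplane $V$ with $V\cap B\neq\emptyset$. Similarly, $H$ crosses $\phi_A(\{a\}\times\orth A)$ iff $H$ is disjoint from every parallel copy of $A$ and $H$ crosses every hyperplane meeting $A$. So I must show: if $H$ satisfies (i) and (ii) for $B$, then $H$ satisfies the analogous two conditions for $A$. The second condition is immediate from $A\subseteq B$: any hyperplane $V$ with $V\cap A\neq\emptyset$ also has $V\cap B\neq\emptyset$, so $H$ crosses it by (ii).

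The real work — and the step I expect to be the main obstacle — is showing that $H$ is disjoint from every parallel copy of $A$. Here one cannot simply quote $A\subseteq B$, because a parallel copy of $A$ need not lie inside a parallel copy of $B$. Instead I would argue as follows. Suppose for contradiction $H$ crosses some parallel copy $A'$ of $A$. Since $A'\parallel A$, the hyperplanes crossing $A'$ are exactly the hyperplanes crossing $A$; so $H$ crosses $A$ itself. But then consider $Hull(A\cup A')\cong A\times(\text{interval hull})$: actually the cleaner route is to use that $H$ crosses $A$ together with condition (ii): $H$ crosses $A$ and $H$ crosses every hyperplane meeting $B$; I claim this lets one build a parallel copy of $B$ crossing $H$, contradicting (i). Concretely, take the product region $P_B\cong B\times\orth B$ containing $B$; since $H$ crosses $B$ and crosses every hyperplane of $P_B$ meeting $B$ — wait, the $\orth B$-direction hyperplanes are precisely those disjoint from $B$, and (i) says $H$ is disjoint from all of them or could cross some... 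The precise bookkeeping of which hyperplanes of $P_B$ the hyperplane $H$ crosses, and concluding that $H$ being in the ``$\orth B$ side'' forces it disjoint from a suitable parallel copy of $B$ while crossing $A$ is contradictory, is exactly the delicate point. I would phrase it via: $H$ crosses $A\subseteq B$, hence $H$ crosses $B$, hence $H$ is \emph{not} disjoint from the parallel copy $B$ of itself, contradicting (i). That is in fact immediate once we know $H$ crosses $A'$ implies $H$ crosses $A$ (parallelism) implies $H$ crosses $B$ ($A\subseteq B$), contradicting (i) directly — so the obstacle dissolves, and the lemma follows cleanly from Lemma~\ref{lem:orthocomp} and the parallelism-preserves-crossing-hyperplanes fact. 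I would write it in that order: basepoint, condition (ii), then condition (i) via the parallelism contradiction.
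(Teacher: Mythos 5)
Your proposal is correct and runs through the same key ingredient as the paper, namely the characterization of hyperplanes crossing an orthogonal complement given in Lemma~\ref{lem:orthocomp}. The paper phrases the argument pointwise: take $x\in\phi_B(\{a\}\times\orth B)$, observe that each hyperplane separating $x$ from $a$ separates two parallel copies of $B$, hence (because $A\subseteq B$ sits inside each of them) separates two parallel copies of $A$, and therefore crosses $\phi_A(\{a\}\times\orth A)$; since $a$ is in that convex set, so is $x$. You instead show $\phi_B(\{a\}\times\orth B)$ is parallel into $\phi_A(\{a\}\times\orth A)$ and then use the shared basepoint $a$ to upgrade parallel-into to containment. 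These are the same argument in slightly different clothing. Your resolution of condition (i) --- $H$ crosses a parallel copy of $A$ $\Rightarrow$ $H$ crosses $A$ (parallelism) $\Rightarrow$ $H$ crosses $B$ $\Rightarrow$ contradiction --- is exactly right and is the analogue of the paper's ``separates parallel copies of $B$ and thus of $A$.'' The only stylistic remark is that the middle of your writeup (the paragraph beginning ``Concretely, take the product region\dots'') circles the issue before landing on the clean two-line contradiction; in a final version you should delete the false start and keep only the final argument.
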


\begin{proof}
Let $x\in\phi_B(\{a\}\times\orth B)$.  Then every hyperplane $H$ separating $x$ from $a$ separates two parallel copies of $B$ and thus separates two parallel copies of $A$, since $A\subseteq B$.  It follows from Lemma~\ref{lem:orthocomp} that every hyperplane separating $a$ from $x$ crosses $\phi_A(\{a\}\times\orth A)$, whence $x\in\phi_A(\{a\}\times\orth A)$.
\end{proof}

Given a convex subcomplex $F\subseteq\cuco X$, fix a base $0$--cube $f\in F$ and 
for brevity, let $\orth F=\phi_F(\{f\}\times\orth F)\subseteq\cuco X$.  Note 
that $f\in\orth F$, and so we let $\orth{\orth F}=\phi_{\orth 
F}\left(\{f\}\times\orth{(\orth F)}\right)$ (here, the $\orth{(\orth F)}$ is the 
abstract orthogonal complement of $\orth F$), which again contains $f$, and so 
we can similarly define $\orth{\left(\orth{\left(\orth{F}\right)}\right)}$ etc.

\begin{lem}\label{lem:3_for_1}Let $F$ be a convex subcomplex of $\cuco X$. Then
$\orth{\left(\orth{\left(\orth{F}\right)}\right)}=\orth{F}$.\end{lem}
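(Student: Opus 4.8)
The plan is to show the two inclusions $\orth{(\orth{(\orth F)})}\subseteq\orth F$ and $\orth F\subseteq\orth{(\orth{(\orth F)})}$ by comparing the sets of hyperplanes crossing each side, using Lemma~\ref{lem:orthocomp} as the basic tool. Throughout, fix the base $0$--cube $f\in F$ and recall that $f$ lies in $\orth F$, $\orth{(\orth F)}$, and $\orth{(\orth{(\orth F)})}$, so that all of these subcomplexes contain $f$ and it makes sense to compare them directly (rather than only up to parallelism). Write $\mathcal H(Y)$ for the set of hyperplanes crossing a convex subcomplex $Y$. Since each of $\orth F$, $\orth{(\orth F)}$, etc. is a convex subcomplex through $f$, and a convex subcomplex containing $f$ is determined by which hyperplanes separate its $0$--cubes from $f$ together with which hyperplanes cross it, it suffices to identify $\mathcal H(\orth{(\orth{(\orth F)})})$ with $\mathcal H(\orth F)$ and to check the separation condition matches; but in fact the cleaner route is to prove the two containments of subcomplexes directly.

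First I would handle $\orth F\subseteq\orth{(\orth{(\orth F)})}$, which I expect to be the easy direction. Apply Lemma~\ref{lem:contravariant} in the following way: by the definition of the orthogonal complement, $F$ and $\orth{(\orth F)}$ both sit inside $P_{\orth F}=\phi_{\orth F}(\orth F\times\orth{(\orth F)})$ and in fact $\phi_{\orth F}(\{f\}\times\orth{(\orth F)})=\orth{(\orth F)}$ while $F$ is parallel into $\phi_{\orth F}(\orth F\times\{f\})$; the upshot is $F\subseteq Hull(F\cup\orth{(\orth F)})$ with the latter a product, and one gets $\orth{(\orth F)}\subseteq$ (a parallel copy of) something containing... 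Let me instead argue via hyperplanes: $H\in\mathcal H(\orth F)$ iff $H$ misses every parallel copy of $F$ but crosses every hyperplane crossing $F$ (Lemma~\ref{lem:orthocomp}). From $F\subseteq P_{\orth F}$ and the product decomposition, the hyperplanes crossing $\orth F$ are exactly those crossing $\orth{(\orth F)}$'s ``complementary factor'', and iterating Lemma~\ref{lem:orthocomp} once more shows $\mathcal H(\orth F)\subseteq\mathcal H(\orth{(\orth{(\orth F)})})$ while the hyperplanes separating a point of $\orth F$ from $f$ are precisely those crossing... this matching is what forces $\orth F\subseteq\orth{(\orth{(\orth F)})}$.

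The main obstacle — and the reverse containment $\orth{(\orth{(\orth F)})}\subseteq\orth F$ — is the bookkeeping needed to show that applying $\orth{(\cdot)}$ three times cannot ``escape'' $\orth F$: one must rule out that $\orth{(\orth{(\orth F)})}$ picks up a hyperplane $H$ that crosses $\orth{(\orth F)}$ but does not cross $\orth F$. The key point is that, by Lemma~\ref{lem:orthocomp} applied to $A=\orth F$ inside $P_{\orth F}\cong\orth F\times\orth{(\orth F)}$, the hyperplanes are partitioned into those crossing $\orth F$ and those crossing $\orth{(\orth F)}$, and \emph{every} hyperplane of one class crosses every hyperplane of the other. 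Therefore a hyperplane crossing $\orth{(\orth{(\orth F)})}$ must cross every hyperplane crossing $\orth{(\orth F)}$ and miss all parallel copies of $\orth{(\orth F)}$; using the product structure of $P_{\orth F}$ together with the fact (from the first direction) that $F\parint\orth{(\orth F)}$, such a hyperplane must lie in the $\orth F$ class, hence cross $\orth F$, and moreover be separated from $f$ only by hyperplanes crossing $\orth{(\orth F)}$, hence lie in $\orth F$ itself. Assembling these hyperplane computations — being careful that ``parallel copy of'' is correctly interpreted within the relevant product region — yields both inclusions and hence the equality $\orth{(\orth{(\orth F)})}=\orth F$. I would expect the write-up to lean entirely on Lemma~\ref{lem:orthocomp} and Lemma~\ref{lem:contravariant}, with no new cube-complex input needed.
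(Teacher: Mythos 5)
You have the right tools in hand --- Lemma~\ref{lem:orthocomp} and Lemma~\ref{lem:contravariant} are indeed all that is needed --- but as written both halves of your argument trail off before closing, and one intermediate claim is wrong as stated. You write that ``$F$ is parallel into $\phi_{\orth F}(\orth F\times\{f\})$.'' But $\phi_{\orth F}(\orth F\times\{f\})$ is the parallel copy of $\orth F$ through $f$, i.e.\ it \emph{is} $\orth F$, so this asserts $F\parint\orth F$, which fails whenever $F$ is not a point: the hyperplanes crossing $F$ and those crossing $\orth F$ are the two disjoint classes of the product $P_F\cong F\times\orth F$. You sensibly abandon that line midway, but the ensuing hyperplane discussion also stops short of an argument (``iterating Lemma~\ref{lem:orthocomp} once more shows $\mathcal H(\orth F)\subseteq\mathcal H(\orth{(\orth{(\orth F)})})$'' is a restatement of what needs proving). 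And in the other direction, your claim that a hyperplane $H$ crossing $\orth{(\orth{(\orth F)})}$ ``must lie in the $\orth F$ class'' of $P_{\orth F}$ tacitly assumes $H$ crosses $P_{\orth F}$, which is not given, since $\orth{(\orth{(\orth F)})}$ need not lie inside $P_{\orth F}$; that step requires its own justification.

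The streamlining you are circling around but not quite using is the double-dual inclusion: for \emph{any} convex subcomplex $A$ with base $0$--cube $f$, one has $A\subseteq\orth{(\orth A)}$. Indeed, each hyperplane crossing $A$ crosses every hyperplane crossing $\orth A$ (the two classes of $P_A\cong A\times\orth A$ cross pairwise) and is disjoint from every parallel copy of $\orth A$ (such a copy is crossed only by hyperplanes of the $\orth A$ class), so by Lemma~\ref{lem:orthocomp} it crosses $\orth{(\orth A)}$; hence $A\parint\orth{(\orth A)}$, and two convex subcomplexes through the same point $f$ with the one parallel into the other are in fact nested. Once this is isolated, the lemma is immediate in both directions: applying it to $A=\orth F$ yields $\orth F\subseteq\orth{(\orth{(\orth F)})}$, while applying it to $A=F$ yields $F\subseteq\orth{(\orth F)}$ and then Lemma~\ref{lem:contravariant} gives $\orth{(\orth{(\orth F)})}\subseteq\orth F$. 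This is essentially the route the paper takes, and it avoids having to reason about whether a given hyperplane sees $P_{\orth F}$.
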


\begin{proof} If $F$ is a convex subcomplex, there is a parallel copy of $\orth{F}$ based at each $0$--cube of $F$, since $F\times\orth{F}$ is a convex
subcomplex of $\cuco X$. Thus
$F\hookrightarrow_{\parallel}\orth{\left(\orth{F}\right)}$, and by
Lemma~\ref{lem:contravariant} we have
$\orth{F}\supseteq\orth{\left(\orth{\left(\orth{F}\right)}\right)}$.
To obtain the other inclusion, we show that every parallel copy of
$F$ is contained in a parallel copy of
$\orth{\left(\orth{F}\right)}$. This is clear since, letting
$A=\orth{F}$, we have that $\phi_A(A\times \orth A)$ is a convex subcomplex
of $\cuco X$, but $F\subset \orth{A}$ by the above, and thus
$\phi_F(F\times\orth{F})\subseteq
\phi_{\orth F}(\orth{F}\times\orth{\left(\orth{F}\right)})$, both of which are
convex subcomplexes of $\cuco X$. Hence
$\orth{F}\subseteq\orth{\left(\orth{\left(\orth{F}\right)}\right)}$,
completing the proof.
\end{proof}

\subsection{Characterisation of $\mathfrak F$ using orthogonal 
complements of compact sets}\label{sec:orthocompcompact}
In this section, we assume that $\cuco X$ is locally finite, but do not need a 
group action.

\begin{thm}\label{thm:factor_system_compact_orthocomp}
Let $F\subset\cuco X$ be a convex subcomplex.  Then $F\in\mathfrak F$ if and 
only if there exists a compact convex subcomplex $C$ so that $\orth C=F$.  
\end{thm}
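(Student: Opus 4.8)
The plan is to prove both implications, using Lemma~\ref{lem:breakup} (the decomposition of $\mathfrak F$ into iterated gates of combinatorial hyperplanes) together with the identity $\orth{(\orth{(\orth F)})}=\orth F$ from Lemma~\ref{lem:3_for_1}.

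For the ``only if'' direction, I would argue by induction on the grade $n$ such that $F\in\mathfrak F_n$ (using Corollary~\ref{cor:decompose}). The base cases are clear: if $F=\cuco X$, take $C$ to be a single $0$--cube, whose orthogonal complement is all of $\cuco X$; if $F$ is a combinatorial hyperplane $H$, then $H=\orth e$ for a suitable $1$--cube $e$ dual to $H$ — this is Figure~\ref{fig:orthocomp_1}, and $e$ is compact and convex. For the inductive step, suppose $F=\gate_H(F')$ with $H$ a combinatorial hyperplane and $F'\in\mathfrak F_{n-1}$, and by induction write $F'=\orth{C'}$ for a compact convex $C'$ and $H=\orth e$. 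The key point is that $\gate$ of orthogonal complements corresponds to orthogonal complement of a join: I expect $\gate_H(F')=\orth C$ where $C$ is (a suitable convex hull of) $e$ together with a copy of $C'$ arranged orthogonally, i.e. $C=\gate_{\orth e}(\cdots)$-type expression built from $C'$ and $e$; compare the parenthetical remark in Figure~\ref{fig:orthocomp_2}, which asserts exactly $\orth{(e_1\cup e_2\cup s)}\parallel\gate_{\orth e_2}(\gate_{\orth e_1}(\orth s))$. Concretely, I would show that $\gate_{\orth C_1}(\orth C_2)$ is parallel to $\orth C$ where $C=Hull(C_1\cup C_2')$ for an appropriately-placed parallel copy $C_2'$ of $C_2$, reducing to the two-hyperplane case by iterating. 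Since a finite union of compact convex pieces has compact convex hull, $C$ is compact, completing the induction. One must also handle passing between a subcomplex and its parallel copies (property~\eqref{item:hyper_3} of Definition~\ref{defn:factor_system}), which is harmless since parallel copies of $F$ have parallel orthogonal complements by Lemma~\ref{lem:orthocomp}.

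For the ``if'' direction, given a compact convex $C$, I must show $\orth C\in\mathfrak F$. Here local finiteness and compactness of $C$ give that $C$ is crossed by only finitely many hyperplanes, say dual to $1$--cubes $e_1,\dots,e_m$ that we may take to be edges of $C$; in fact $C$ is the convex hull of these, and more usefully $C$ is parallel into the intersection of the combinatorial hyperplanes $H_i=\orth{e_i}$. Using Lemma~\ref{lem:orthocomp}, a hyperplane crosses $\orth C$ iff it misses every parallel copy of $C$ but crosses every hyperplane crossing $C$ — and I expect to identify this set with the hyperplanes crossing $\gate_{H_1}(\gate_{H_2}(\cdots\gate_{H_{m-1}}(H_m)\cdots))$ (up to the finitely many hyperplanes separating parallel copies of $C$, which contribute a bounded product factor that can be absorbed by another gate). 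By Lemma~\ref{lem:breakup}, iterated gates of combinatorial hyperplanes are parallel to elements of $\mathfrak F$, and $\mathfrak F$ is parallelism-closed, so $\orth C\in\mathfrak F$.

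The main obstacle I anticipate is bookkeeping in the inductive step of the first direction: making precise the claim that $\gate$ of orthogonal complements is the orthogonal complement of a join, and in particular keeping track of base $0$--cubes and parallel copies so that the compact set $C$ actually sits inside $\cuco X$ with the right orthogonal complement on the nose rather than merely up to parallelism. I would isolate this as a lemma: for compact convex $C_1, C_2$ with a common $0$--cube, $\gate_{\orth{C_1}}(\orth{C_2})$ is parallel to $\orth{Hull(C_1\cup C_2)}$, proved by a direct hyperplane-crossing comparison via Lemma~\ref{lem:orthocomp} and Lemma~\ref{lem:cubical_gate_map}, then bootstrapped to arbitrary iterated gates. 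Everything else is either an invocation of the lemmas already established or a routine finiteness argument from local finiteness.
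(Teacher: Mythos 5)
Your proposal follows essentially the same route as the paper: for the ``only if'' direction, induct on the grade in $\cup_n\mathfrak F_n$, writing $F=\gate_H(F')$ with $H=\orth e$ and $F'=\orth{C'}$, and take $C=Hull(e\cup C')$ with $e$ and $C'$ chosen closest in their parallelism classes (your ``appropriately-placed parallel copy''), then compare crossing hyperplanes; for the ``if'' direction, express $\orth C$ as a finite intersection/iterated gate of the combinatorial hyperplanes dual to the finitely many hyperplanes crossing $C$. The one caveat you flag in the ``if'' direction — a ``bounded product factor'' coming from hyperplanes separating parallel copies of $C$ — is not actually needed: by Lemma~\ref{lem:orthocomp} those hyperplanes already cross $\orth C$ and every $H_i$, so the hyperplane-set comparison is exact and $\orth C$ is precisely the intersection $\bigcap_i\gate_{H_1^+}(H_i^+)$ based at $x$.
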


\begin{proof}
Let $C$ be a compact convex subcomplex of $\cuco X$.  Let $H_1,\ldots,H_k$ be 
the hyperplanes crossing $C$.  Fix a basepoint $x\in C$, and suppose the $H_i$ 
are labeled so that $x\in \neb(H_i)$ for $1\leq i\leq  m$, and 
$x\not\in\neb(H_i)$ for $i>m$, for some $m\leq k$.  Let 
$F=\bigcap_{i=1}^k\gate_{H_1}(H_i)$, which contains $x$.  Any hyperplane 
$H$ crosses $\phi_C(\{x\}\times\orth C)$ if and only if $H$ crosses each $H_i$, 
which occurs if and only if $H$ crosses $F$.  Hence $F=\phi_C(\{x\}\times\orth 
C)$, as required.

We now prove the converse.  Let $F\in\mathfrak F_n$ for $n\geq 1$.  If $n=1$ and 
$F$ is a combinatorial hyperplane, $F=\orth e$ for some $1$--cube $e$ of $\cuco 
X$.  Next, assume that $n\geq 2$ and write $F=\gate_H(F')$ where $F'\in\mathfrak 
F_{n-1}$ and $H$ is a combinatorial hyperplane.  Induction on $n$ gives 
$F'=\orth {(C')}$ for some compact convex subcomplex $C'$.

Let $e$ be a $1$--cube with orthogonal complement $H\in\mathfrak
F_1$, chosen as close as possible to $C'$, so that
$\dist(e,C')=\dist(H,C')$.  In particular, any hyperplane separating $e$ from 
$C'$ separates $H$ from $C'$.  Moreover, we can and shall assume that $C'$ was 
chosen in its parallelism class so that $\dist(e,C')$ is minimal when $e,C'$ are 
allowed to vary in their parallelism classes.  

Let $C$ be the convex hull of (the
possibly disconnected set) $e\cup C'$. 

We claim that $\gate_H(F')=\{x\}\times \orth C$.  First, suppose that $V$ is a 
hyperplane crossing $\{x\}\times\orth C$.  Then $V$ separates two parallel 
copies of $C$, each of which contains a parallel copy of $e$ and one of $C'$.  
Hence $V$ crosses $H$ and $F'$, so $V$ crosses $\gate_H(F')$.  Thus 
$\{x\}\times\orth C\subseteq\gate_H(F')$.

Conversely, suppose $V$ is a hyperplane crossing $\gate_H(F')$, i.e. crossing 
$H$ and $F'$.  To show that $V$ crosses $\{x\}\times\orth C$, it suffices to 
show that $V$ crosses every hyperplane crossing $C$.  If $W$ crosses $C$, then 
either $W$ separates $e,C'$ or crosses $e\cup C'$.  In the latter case, $V$ 
crosses $W$ since $V$ crosses $H$ and $\orth{(C')}=F'$.  In the former case, 
since $e,C'$ are as close as possible in their parallelism classes, $W$ 
separates $e,C'$ only if it separates $H$ from $C'\times\orth{(C')}$, so $W$ 
must cross $V$.  Hence $\gate_H(F')\subseteq\{x\}\times\orth C$. Since only finitely many hyperplanes $V$ either cross $e$, cross $C'$, 
or separate $e$ from $C'$, the subcomplex $C$ is compact.
\end{proof}

\begin{cor}\label{cor:f_characterise}
If $F\in\mathfrak F$, then $\orth{(\orth F)}=F$.
\end{cor}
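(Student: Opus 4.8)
The plan is to deduce the corollary immediately by combining Theorem~\ref{thm:factor_system_compact_orthocomp} with Lemma~\ref{lem:3_for_1}. Since $F\in\mathfrak F$, Theorem~\ref{thm:factor_system_compact_orthocomp} provides a compact convex subcomplex $C\subseteq\cuco X$ such that $\orth C=F$; more precisely, the proof of that theorem produces a base $0$--cube $x\in C$ with $F=\phi_C(\{x\}\times\orth C)$, and $x\in F$ as well, so all the iterated orthogonal complements below are taken based at this common $x$.

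The one thing to verify is that Lemma~\ref{lem:3_for_1} applies with the base point bookkeeping matching up. Applying Lemma~\ref{lem:3_for_1} to the convex subcomplex $C$ (based at $x$) gives $\orth{\left(\orth{\left(\orth{C}\right)}\right)}=\orth{C}$. Now substitute $\orth C=F$: reading the left-hand side as $\orth{\left(\orth{F}\right)}$ and the right-hand side as $F$ yields $\orth{(\orth F)}=F$, which is exactly the assertion.

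I do not expect any real obstacle here: the corollary is a formal consequence of the two results already established. The only point requiring a sentence of care is noting that $x\in C\cap F\cap\orth F\cap\orth{(\orth F)}$, so that the operators $\orth{(-)}$ in the statement of Lemma~\ref{lem:3_for_1} and in the corollary are defined with respect to the same base $0$--cube, making the identification $\orth C=F$ legitimate at the level of honest subcomplexes of $\cuco X$ (not merely up to parallelism). If one prefers to avoid even this, one can instead argue directly: $F\hookrightarrow_\parallel\orth{(\orth F)}$ always holds by the first paragraph of the proof of Lemma~\ref{lem:3_for_1}, while Lemma~\ref{lem:contravariant} applied to $F=\orth C\subseteq\orth C$ together with $C\hookrightarrow_\parallel\orth{(\orth C)}$ gives the reverse containment $\orth{(\orth F)}\subseteq\orth C=F$, and then a parallelism-plus-containment argument as in Lemma~\ref{lem:3_for_1} forces equality.
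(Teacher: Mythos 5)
Your argument is exactly the paper's: invoke Theorem~\ref{thm:factor_system_compact_orthocomp} to write $F=\orth C$ for some compact convex $C$, then apply Lemma~\ref{lem:3_for_1} to $C$ and substitute. The extra care you take about basepoints is sound but not needed beyond what the paper's notation already implicitly handles; the proof is correct and coincides with the paper's.
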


\begin{proof}
If $F\in\mathfrak F$, then $F=\orth C$ for some compact $C$, by 
Theorem~\ref{thm:factor_system_compact_orthocomp}, and hence 
$\orth{(\orth{(\orth C)})}=\orth{(\orth F)}=\orth C=F$, by 
Lemma~\ref{lem:3_for_1}.
\end{proof}

\section{Auxiliary conditions}\label{sec:uniform}
In this section, the group $G$ acts geometrically on the proper 
CAT(0) cube complex $\cuco X$.

\subsection{Rotation}
\begin{defn}[Rotational]\label{defn:forgetful}
The action of $G$ on $\cuco X$ is \emph{rotational} if the following 
holds.  For each hyperplane $B$, there is a finite-index subgroup 
$K_B\leq\stabilizer_G(B)$ so that for all hyperplanes $A$ with 
$\dist(A,B)>0$, and all $k\in K_B$, the carriers $\neb(A)$ and 
$\neb(kA)$ are either equal or disjoint.
\end{defn}

\begin{rem}\label{rem:stabilizer_special}
For example, if $G\backslash\cuco X$ is (virtually) special, then $G$ acts 
rotationally on $\cuco X$, but one can easily make examples of non-cospecial 
rotational actions on CAT(0) cube complexes. 
\end{rem}

To illustrate how to apply rotation, we first prove a lemma about $\mathfrak 
F_2$.

\begin{lem}[Uniform cocompactness in 
$\mathfrak F_2$ under rotational actions]\label{lem:f_2_uniformly_cocompact}
Let $G$ act properly, cocompactly, and rotationally on $\cuco X$.  Then for any 
ball $Q$ in $\cuco X$, there exists $s\ge0$, depending only on $\cuco X,$ and 
the radius of $Q$, so that for 
all $A,B\in\mathfrak F_1$, at most $s$ distinct translates of $\gate_B(A)$ can 
intersect $Q$.
\end{lem}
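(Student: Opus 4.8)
The plan is to strip away, by standard coboundedness arguments, everything except one geometric assertion about how hyperplane-stabilisers move combinatorial hyperplanes, and then to read that assertion off from the rotational hypothesis. As in the proof of Lemma~\ref{lem:134}, it suffices to treat balls $Q=B_\rho(r)$ centred at a fixed basepoint $r$ (if $t$ is a coboundedness constant for $G\curvearrowright\cuco X$, then any translate of $\gate_B(A)$ meeting an arbitrary radius-$\rho$ ball can be moved, by one of the uniformly-boundedly-many $h\in G$ moving $r$ to within $\rho+t$ of that ball's centre, onto a translate meeting $B_{\rho+t}(r)$). Now a translate $g\gate_B(A)=\gate_{gB}(gA)$ meeting $Q$ lies in the combinatorial hyperplane $gB$, which must itself meet $Q$; since $\cuco X$ is uniformly locally finite, only $M_1=M_1(\rho,\cuco X)$ combinatorial hyperplanes meet $Q$, so we may fix such a $B_0$ and bound the number of distinct $g\gate_B(A)$ with $gB=B_0$. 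Writing $g=g_0k$ with $g_0B=B_0$ fixed and $k\in\stabilizer_G(B)$, equivariance of gates gives $g\gate_B(A)=\gate_{B_0}(g_0kA)$, and as $k$ varies $g_0kA$ ranges over the $\stabilizer_G(B_0)$--orbit of $A_0:=g_0A$; so the subcomplexes to count are exactly the translates $kP_0$, $k\in\stabilizer_G(B_0)$, of $P_0:=\gate_{B_0}(A_0)$. Finally, replacing $\stabilizer_G(B_0)$ by the finite-index rotational subgroup $K_{B_0}$ of Definition~\ref{defn:forgetful} costs only a further factor $\iota$, bounded independently of $B_0$ since there are finitely many $G$--orbits of combinatorial hyperplanes.

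It remains to bound, uniformly in $A_0,B_0$, the number of distinct members of $\{kP_0:k\in K_{B_0}\}$ meeting $Q$, and here the argument splits into two cases. If $\widehat{A_0}$ crosses $\widehat{B_0}$, there are only finitely many $G$--orbits of such pairs $(B_0,A_0)$, so by Proposition~\ref{prop:f_n_properties} the group $\stabilizer_G(P_0)$ acts on $P_0$ cocompactly with a fundamental domain of diameter at most some constant $\kappa_1$ (the maximum over these orbit types); the usual coset count then finishes this case --- pick $y\in kP_0\cap Q$, move $k^{-1}y\in P_0$ to within $\kappa_1$ of the $\stabilizer_G(P_0)$--orbit of a fixed $p_0\in P_0$, and note that distinct $\stabilizer_G(P_0)$--cosets then have distinct representatives $g'$ with $\dist(g'p_0,r)\le\rho+\kappa_1$, of which there are uniformly boundedly many. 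If instead $\widehat{A_0}$ and $\widehat{B_0}$ do not cross, the goal is to show that the subcomplexes $\{kP_0:k\in K_{B_0}\}$ are pairwise either equal or disjoint; granting this, distinct members meeting $Q$ are disjoint convex subcomplexes, each containing a $0$--cube within distance $\rho+\dimension\cuco X$ of $r$, whence at most $|B_{\rho+\dimension\cuco X}(r)^{(0)}|$ of them meet $Q$ --- again a uniform bound.

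So the crux is: for $k\in K_{B_0}$ with $\widehat{A_0}$ not crossing $\widehat{B_0}$, show that $P_0\cap kP_0\ne\emptyset$ forces $P_0=kP_0$. By the rotational hypothesis applied to $\widehat{B_0}$, the carriers $\neb(\widehat{A_0})$ and $k\neb(\widehat{A_0})=\neb(k\widehat{A_0})$ are either equal or disjoint. If equal, then $k\widehat{A_0}=\widehat{A_0}$, so $kA_0$ is $A_0$ or its opposite combinatorial hyperplane $A_0^{\mathrm{op}}$; and since $\widehat{A_0}$ does not cross $B_0$, no hyperplane separates $\gate_{B_0}(z)$ from $\gate_{B_0}(\gate_{A_0}(z))$ for any $z\in A_0^{\mathrm{op}}$, so $\gate_{B_0}(A_0^{\mathrm{op}})=\gate_{B_0}(A_0)$ and $kP_0=P_0$. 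If the carriers are disjoint, I must contradict the existence of a $0$--cube $y\in P_0\cap kP_0$: using Lemma~\ref{lem:parallel_product} and the parallelisms $\gate_{B_0}(A_0)\parallel\gate_{A_0}(B_0)$ and $\gate_{B_0}(kA_0)\parallel\gate_{kA_0}(B_0)$, such a $y$ satisfies $\dist(y,A_0)=\dist(y,kA_0)=d:=\dist(A_0,B_0)$, and the geodesics from $y$ to $\gate_{A_0}(y)$, from $y$ to $\gate_{kA_0}(y)$, and between those two points each cross only hyperplanes disjoint from $B_0$; one then wants to conclude that $\neb(\widehat{A_0})$ and $\neb(k\widehat{A_0})$ must meet, a contradiction. \textbf{This last step is the main obstacle}: metric estimates alone give only $\dist(A_0,kA_0)\le 2d$, which is consistent with disjoint carriers, so the argument has to use the combinatorics of which hyperplanes separate $y$, $\gate_{A_0}(y)$ and $\gate_{kA_0}(y)$ --- intuitively, that a rotational element of $\stabilizer_G(B_0)$ cannot carry $\widehat{A_0}$ entirely off its own carrier while leaving the gate of $A_0$ onto $B_0$ unmoved. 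In concrete examples the difficulty evaporates because $\gate_{B_0}(A_0)$ is then forced to be a single $0$--cube or all of $B_0$; isolating the general combinatorial statement is what the proof must pin down.
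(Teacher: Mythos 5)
Your reductions in the first two paragraphs are sound and are essentially those in the paper: localise to a ball around a fixed basepoint, fix the combinatorial hyperplane $B_0$, translate the problem into counting $K_{B_0}$--translates of $P_0=\gate_{B_0}(A_0)$, and absorb the passage from $\stabilizer_G(B_0)$ to $K_{B_0}$ into a uniform factor $\iota$. Your crossing case ($\widehat A_0$ crossing $\widehat B_0$, so $P_0=A_0\cap B_0$) is also fine, though the paper handles it more quickly just from local finiteness, without invoking Proposition~\ref{prop:f_n_properties}.

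The gap you flag at the end is, however, a real one, and it is precisely the point where your approach diverges from the paper's. You apply the rotational hypothesis to the hyperplane $\widehat A_0$ itself and try to conclude that the translates $kP_0$ are pairwise equal or disjoint. Rotation tells you that $\neb(\widehat A_0)$ and $\neb(k\widehat A_0)$ are equal or disjoint, but when they are disjoint this gives you no direct control over the gates onto $B_0$ --- the two carriers can be far apart in $\cuco X$ while their shadows on $B_0$ overlap. Your own diagnosis ("metric estimates alone give only $\dist(A_0,kA_0)\le 2d$") correctly identifies why this route stalls.

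The missing idea, which is the engine of the paper's proof, is to apply rotation not to $A_0$ but to a \emph{separating} hyperplane. Having reduced to $g_1=1,\ldots,g_k\in K_{B_0}$ with the $g_iA$ on the same side of $B$, all $g_i\gate_B(A)$ distinct but all containing $x$, the paper picks a hyperplane $V$ separating $A$ from $B$ and from $g_2A$, chosen as close to $B$ as possible. Then $V$ is disjoint from $B$ (so rotation applies to it), and each $g_iV$ separates $g_iA$ from $B$. The minimality of the choice of $V$ forces the carriers $\neb(g_iV)$ to pairwise intersect, while the distinctness of the $g_i\gate_B(A)$ forces at least two of the $g_iV$ to be distinct; since the $g_iV$ all lie in one $K_B$--orbit, this violates rotation unless $k=1$. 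In other words, rotation must be leveraged on a hyperplane that is geometrically pinned close to $B$, not on $A$ itself. If you want to salvage your own framing, note that in your "disjoint carriers" subcase there is a hyperplane $W$ separating $\widehat A_0$ from $k\widehat A_0$, and because $y\in P_0\cap kP_0$ this $W$ (or $k^{-1}W$) separates $A_0$ from $B_0$; pursuing rotation on that $W$, chosen minimally, recovers the paper's argument. As written, though, your proposal does not close the gap.
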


\begin{proof}
Note that if $B,gB$ are in the same $G$--orbit, and $K_B\leq\stabilizer_G(B)$ 
witnesses the rotation of the action at $B$, then $K_B^g$ does the same for 
$gB$, so we can assume that the index of $K_B\in\stabilizer_G(B)$ is uniformly 
bounded by some constant $\iota$ as $B$ varies over the (finitely many orbits 
of) combinatorial hyperplanes.

Next, note that it suffices to prove the claim for $Q$ of radius $0$, since the 
general statement will then follow from uniform properness of $\cuco X$.

Finally, it suffices to fix combinatorial hyperplanes $B$ and $A$ and bound the 
number of $\stabilizer_G(B)$--translates of $A$ whose projections on $B$ 
contain 
some fixed $0$--cube $x\in B$, since only boundedly many translates of $B$ can 
contain $x$.

We can assume that $A$ is disjoint from $B$, for otherwise  $\gate_B(A)=A\cap 
B$, and the number of translates of $A$ containing $x$ 
is bounded in terms of $G$ and $\cuco X$ only.

Now suppose $\dist(A,B)>0$.  
First, let 
$\{g_1,\ldots,g_k\}\subset K_B$ be such that the translates 
$g_i\gate_B(A)$ are all distinct and 
$x\in\bigcap_{i=1}^k\gate_B(g_iA)=\bigcap_{i=1}^kg_i\gate_B(A)$.  For 
simplicity, we can and shall assume that $g_1=1$.

We can also assume, by multiplying our eventual bound by $2$, that the $g_iA$ 
all lie on the same side of $B$, i.e. the hyperplane $B'$ whose carrier is 
bounded by $B$ and a parallel copy of $B$ does not separate any pair of the 
$g_iA$.  By rotation, $A,g_2A$ are disjoint, and hence separated by some 
hyperplane $V$.  

Since $V$ cannot separate $\gate_B(A),\gate_B(g_2A)$, we have that $V$ 
separates either $A$ or $g_2A$ from $B$.  (The other possibility is that 
$V=B$ but we have ruled this out above.)  Up to relabelling, 
we can assume the former.  Then, for $i\ge2$, we have that $g_iV$ separates 
$g_iA$ from $g_iB=B$.  Moreover, by choosing $V$ as close as possible to $B$ 
among hyperplanes that separate $A$ from $B$ and $g_2A$, we see that the 
hyperplanes $\{g_iV\}_{i=1}^k$ have pairwise-intersecting carriers, and at 
least 
two of them are distinct.  This contradicts the rotation hypothesis unless 
$k=1$.

More generally, the above argument shows that if $\{g_1,\ldots,g_k\}\subset 
\stabilizer_G(B)$ are such that the translates $g_i\gate_B(A)$ are all distinct 
and $x\in\bigcap_{i=1}^k\gate_B(g_iA)=\bigcap_{i=1}^kg_i\gate_B(A)$, then the 
number of $g_i$ belonging to any given left coset of $K_B$ in 
$\stabilizer_G(B)$ is uniformly bounded.  Since 
$[\stabilizer_G(B):K_B]\leq\iota$, the lemma follows.
\end{proof}

More generally:

\begin{lem}\label{lem:uniform_cocompactness_in_F}
Let $G$ act properly, cocompactly, and rotationally on $\cuco X$.  Then for all $\rho\geq0$, there exists a constant $s'$ so 
that the following holds.  Let $F\in\mathfrak 
F$.  Then at most $s'$ distinct $G$--translates of $F$ can 
intersect any $\rho$--ball in $\cuco X$.
\end{lem}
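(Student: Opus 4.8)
The plan is to argue by induction on $d=\dim\cuco X$, writing $\mathfrak F(\cuco Y)$ for the hyperclosure of a CAT(0) cube complex $\cuco Y$.  When $d=0$ the complex is a single point and there is nothing to prove, so suppose $d\ge1$ and that the statement holds for every proper CAT(0) cube complex of dimension $<d$ carrying a proper, cocompact, rotational action.  Fix $F\in\mathfrak F=\mathfrak F(\cuco X)$.  If $F=\cuco X$ its only translate is itself, so assume $F\ne\cuco X$.  Then by Theorem~\ref{thm:factor_system_compact_orthocomp} we may write $F=\orth C$ for a compact convex subcomplex $C$ with base $0$--cube $c$; since $F\ne\cuco X$, $C$ is not a single $0$--cube and hence contains a $1$--cube $e$ at $c$, so Lemma~\ref{lem:contravariant} gives $F\subseteq\orth e=:H_F$, a combinatorial hyperplane.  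Consequently every $G$--translate $gF$ lies in the combinatorial hyperplane $gH_F$, and the idea is to reduce counting translates of $F$ in $\cuco X$ to counting translates of $F$ inside $H_F$, where the inductive hypothesis applies because $\dim H_F\le d-1$.

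To set this up, I would put $\Gamma=\stabilizer_G(H_F)$ and check that $\Gamma$ acts on the proper CAT(0) cube complex $H_F$ properly (it is a subgroup of $G$), cocompactly (Proposition~\ref{prop:f_n_properties}), and rotationally.  For the last point, a hyperplane of $H_F$ is $\beta=V\cap H_F$ for a unique hyperplane $V$ of $\cuco X$ crossing $H_F$, so $\stabilizer_\Gamma(\beta)=\Gamma\cap\stabilizer_G(V)$, and $K_\beta:=K_V\cap\Gamma$ (with $K_V$ as in Definition~\ref{defn:forgetful}) is a finite-index subgroup of it; if $\alpha=W\cap H_F$ is a hyperplane of $H_F$ with $\dist(\alpha,\beta)>0$ then $\dist_{\cuco X}(W,V)>0$, so for $k\in K_\beta$ the carriers $\neb(W),\neb(kW)$ are equal or disjoint by Definition~\ref{defn:forgetful} — in the former case $W=kW$ and $\alpha=k\alpha$, in the latter $\neb_{H_F}(\alpha)$ and $\neb_{H_F}(k\alpha)$ are disjoint, being contained in $\neb(W)$ and $\neb(kW)$.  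I also need $F$ itself to lie in $\mathfrak F(H_F)$ so that the inductive hypothesis can be applied to it: this uses that hyperclosures restrict to convex subcomplexes, with $\mathfrak F(\cuco Y)=\{\gate_{\cuco Y}(F'):F'\in\mathfrak F(\cuco X)\}$ for convex $\cuco Y\subseteq\cuco X$ (cf.\ \cite{BHS:HHS_I}), so $F\subseteq H_F$ forces $F=\gate_{H_F}(F)\in\mathfrak F(H_F)$.

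With these in hand, fix a $\rho$--ball $Q=B_\rho(x)$.  Every translate $gF$ meeting $Q$ lies in the combinatorial hyperplane $gH_F$, which also meets $Q$; by bounded geometry of $\cuco X$ only $N_1=N_1(\rho)<\infty$ distinct combinatorial hyperplanes in the $G$--orbit of $H_F$ meet $Q$ (each contains one of the boundedly many $0$--cubes near $x$, and each $0$--cube lies in boundedly many combinatorial hyperplanes).  Grouping the translates $gF$ by the value $H'=gH_F$ — each such $gF$ falling into at least one group — one fixes $g'$ with $g'H_F=H'$ and writes any relevant $g$ as $g=g'\gamma$ with $\gamma\in\Gamma$; the distinct $gF$ in this group are then the subcomplexes $g'\gamma F$ with $\gamma F\cap B_\rho((g')^{-1}x)\ne\emptyset$, and since $\gamma F\subseteq H_F$ each such $\gamma F$ meets the $2\rho$--ball of $H_F$ about $\gate_{H_F}((g')^{-1}x)$.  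By the inductive hypothesis there are at most $s''(2\rho)$ of these, where $s''$ depends only on the action $\Gamma\curvearrowright H_F$; and since there are finitely many $G$--orbits of combinatorial hyperplanes and $\stabilizer_G(H_F)\curvearrowright H_F$ is determined up to conjugacy by the orbit of $H_F$, $s''$ may be chosen independently of $F$.  Summing over the $\le N_1$ groups, at most $N_1(\rho)\cdot s''(2\rho)$ distinct translates of $F$ meet $Q$, which completes the induction.

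I expect the hard part to be the inductive step's two structural inputs: that the restricted action $\stabilizer_G(H_F)\curvearrowright H_F$ is again proper, cocompact and \emph{rotational} on a strictly lower-dimensional complex — here the precise form of Definition~\ref{defn:forgetful} is what makes carriers behave well under intersection with $H_F$ — and that an element of $\mathfrak F(\cuco X)$ contained in $H_F$ genuinely lies in $\mathfrak F(H_F)$.  The remaining work is the bookkeeping that turns $G$--translates of $F$ meeting $Q$ into $\Gamma$--translates of $F$ inside $H_F$ meeting a slightly larger ball, resting on the fact that combinatorial hyperplanes $gH_F=g'H_F$ coinciding as subcomplexes force $g^{-1}g'\in\Gamma$, together with bounded geometry of $\cuco X$.
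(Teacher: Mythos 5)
Your approach is genuinely different from the paper's: you induct on $\dim\cuco X$ and reduce counting translates of $F$ in $\cuco X$ to counting translates inside a lower-dimensional combinatorial hyperplane $H_F\supseteq F$, whereas the paper argues directly in $\cuco X$, writing $F=\bigcap_{i=1}^n\gate_B(A_i)$ (from the first paragraph of the proof of Theorem~\ref{thm:factor_system_compact_orthocomp}) and using the rotation hypothesis to show that each $g\in K_B$ actually \emph{fixes} $\gate_B(A_j)$ for every $A_j$ disjoint from $B$, leaving only the boundedly many $A_j$ through $x$ to worry about. Your verification that the restricted action $\stabilizer_G(H_F)\curvearrowright H_F$ is again proper, cocompact and rotational is correct (the key point, that disjointness of $W\cap H_F$ and $V\cap H_F$ in $H_F$ forces $\dist_{\cuco X}(W,V)>0$ via a separating hyperplane of $H_F$, is right), and the bookkeeping reducing a $\rho$--ball in $\cuco X$ to a $2\rho$--ball in $H_F$ is fine.

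However, there is a genuine gap at the point where you need $F\in\mathfrak F(H_F)$ in order to invoke the inductive hypothesis. You cite the identity $\mathfrak F(\cuco Y)=\{\gate_{\cuco Y}(F'):F'\in\mathfrak F(\cuco X)\}$, attributing it to~\cite{BHS:HHS_I}, but Lemma~8.4 of that paper says something weaker: if $\cuco X$ admits \emph{a} factor system, then so does any convex $\cuco Y$, with the induced factor system being the set of gates. That is a statement about collections with finite multiplicity, not about the hyperclosure; in particular it does not say that the hyperclosure of $\cuco Y$ coincides with the gate--image of the hyperclosure of $\cuco X$, nor even that it contains it. (Moreover, invoking that lemma presupposes that $\mathfrak F(\cuco X)$ already has finite multiplicity, which is exactly what one is ultimately trying to prove.) The needed inclusion is delicate already in grade one: if $V$ is a hyperplane of $\cuco X$ that does \emph{not} cross $H_F$, it is not obvious that $\gate_{H_F}(V^\pm)$ lies in $\mathfrak F(H_F)$, since it is not a combinatorial hyperplane of $H_F$ and the orthogonal--complement characterisation of $\mathfrak F(H_F)$ in Theorem~\ref{thm:factor_system_compact_orthocomp} would require producing a compact convex $C\subseteq H_F$ with $\orth C=\gate_{H_F}(V^\pm)$; the naive candidate $\gate_{H_F}(C_0)$ (where $F=\orth C_0$) is not obviously adequate because the hyperplanes of $C_0$ that miss $H_F$ are lost in the passage to $\gate_{H_F}(C_0)$. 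So as written the inductive hypothesis cannot be applied to $F$ inside $H_F$. One could try to repair this by inducting on a stronger statement (e.g.\ counting translates of gates of ambient hyperclosure elements rather than elements of $\mathfrak F(H_F)$), but then the inherited--rotation verification and base cases would need to be redone for that larger class. The paper sidesteps this entirely by never changing ambient complex.
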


\begin{proof}
As in the proof of Lemma~\ref{lem:f_2_uniformly_cocompact}, it suffices to 
bound the number of $G$--translates of $F$ containing a given $0$--cube $x$.  
As in the same proof, it suffices to bound the number of 
$\stabilizer_G(B)$--translates of $F$ containing $x$, where $B$ is a 
combinatorial hyperplane for which $x\in F\subset B$.

By the first paragraph of the proof of Theorem~\ref{thm:factor_system_compact_orthocomp}, there exists $n$ and combinatorial hyperplanes 
$A_1,\ldots,A_n$ such that 
$F=\bigcap_{i=1}^n\gate_B(A_i)$.  If $A_n$ is parallel to 
$B$, then $F=\bigcap_{i=1}^{n-1}\gate_B(A_i)$, so by choosing a smallest such 
collection, we have that no $A_i$ is parallel to $B$.    

Let $g_1,\ldots,g_k\in K_B$ have the property that the $g_iF$ are 
all distinct and contain $x$.  Note that $g_iF=\bigcap_{j=1}^n\gate_B(g_iA_j)$. 
 
Now, if $A_j$ is disjoint from $B$, then rotation implies that for 
all $i$, either $g_iA_j=A_j$ or $g_iA_j\cap A_j=\emptyset$.  If 
$g_iA_j\neq A_j$, there is a hyperplane $V$ separating them, and $V$ cannot 
cross or coincide with $B$ (as in the proof of 
Lemma~\ref{lem:f_2_uniformly_cocompact}).  Hence $V$ separates $A_j$, say, from 
$B$.  So $g_iV$ separates $g_iA_j$ from $B$, and $g_iV\neq g_jV$.  By choosing $V$ 
as close as possible to $B$, we have (again as in 
Lemma~\ref{lem:f_2_uniformly_cocompact}) that $V$ and $g_iV$ cross or osculate, 
which contradicts rotation.  Hence $g_iA_j=A_j$ for all such $i,j$.

Let $J$ be the set of $j\leq n$ so that $A_j$ is disjoint from $B$, so that 
$\bigcap_{j\in J}\gate_B(A_j)$ is fixed by each $g_i$.  Let 
$J'=\{1,2,\ldots,n\}-J$.  Note that for all $j\in J'$, the combinatorial 
hyperplane $A_j$ is one of at most $\chi$ combinatorial hyperplanes that 
contain $x$ and $\chi$ is the maximal degree of a vertex in $\cuco X$.  
Moreover, $\gate_B(A_j)=A_j\cap B$.

 Since each $g_i$ fixes $\bigcap_{j\in J}\gate_B(A_j)$, $k$ must be bounded in 
terms of the number of translates of $\bigcap_{j\in J'}\gate_B(A_j)$ containing 
$x$; since we can assume that the $A_j$ contain $x$, this 
follows.

As in the proof of Lemma~\ref{lem:f_2_uniformly_cocompact}, if 
$g_1,\ldots,g_k\in \stabilizer_G(B)$ have the property that the $g_iF$ are 
all distinct and contain $x$, then the number of $g_i$ belonging to any 
particular $hK_B,h\in\stabilizer_G(B)$ is uniformly bounded, and the number of 
such cosets is bounded by $\iota$, so the number of such 
$\stabilizer_G(B)$--translates of $F$ containing $x$ is uniformly bounded.
\end{proof}

We now prove Theorem~\ref{thmi:main} in the special case where $G$ acts on 
$\cuco X$ rotationally.

\begin{cor}\label{cor:rotation_FS}
Let $G$ act properly, cocompactly, and rotationally on the proper CAT(0) cube 
complex $\cuco X$.  Then $\mathfrak F$ is a factor system.
\end{cor}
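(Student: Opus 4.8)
The plan is to argue by contradiction: assuming $\mathfrak F$ is not a factor system, I would extract an infinite strictly ascending chain in $\mathfrak F$ and rule it out using the uniform cocompactness supplied by the rotational hypothesis.

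\textbf{Reducing to a chain.} If no uniform bound $\xi$ works, then, since $\mathfrak F$ is $G$--invariant and $G\curvearrowright\cuco X$ is cocompact, every $0$--cube is a $G$--translate of one in a fixed compact fundamental domain, which has finitely many $0$--cubes; a pigeonhole argument therefore produces a single $0$--cube $x$ lying in infinitely many members of $\mathfrak F$. Applying Lemma~\ref{lem:chain_find} to this infinite family yields an infinite chain $\{F_i\}\subseteq\mathfrak F$ with $x\in F_i$ for all $i$, which is either strictly $\subseteq$--ascending or strictly $\subseteq$--descending; I would treat the two cases by the same method, so assume $x\in F_1\subsetneq F_2\subsetneq\cdots$. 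Note that the orthogonal complements $\orth{F_i}$ at $x$ then strictly descend, by Lemma~\ref{lem:contravariant} together with Corollary~\ref{cor:f_characterise} (since $\orth{F_i}=\orth{F_{i+1}}$ would force $F_i=\orth{(\orth{F_i})}=\orth{(\orth{F_{i+1}})}=F_{i+1}$), and each $\orth{F_i}$ is unbounded (a bounded one would begin an impossible infinite strictly descending chain of finite subcomplexes) and strictly larger than $\{x\}$ (otherwise $F_i=\orth{(\orth{F_i})}=\cuco X$, contradicting strict ascent); this is the structural fact — the chain genuinely grows — that the rest of the argument exploits.

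\textbf{Uniform cocompactness of the $F_i$.} By Lemma~\ref{lem:uniform_cocompactness_in_F}, the number of $G$--translates of any $F\in\mathfrak F$ meeting a ball of fixed radius is bounded independently of $F$. Feeding this into the proof of Lemma~\ref{lem:134}, with $\mathcal Y=G\cdot F$ and keeping track of the constants, produces a single constant $D$, depending only on $\cuco X$ and the cocompactness constant of $G\curvearrowright\cuco X$, so that for every $F\in\mathfrak F$ the group $\stabilizer_G(F)$ acts on $F$ with a fundamental domain of diameter at most $D$. In particular, since $x\in F_i$, every $0$--cube of $F_i$ can be moved to within $D$ of $x$ by an element of $\stabilizer_G(F_i)$. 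This is the only step where the rotational hypothesis is used; everything before it holds for an arbitrary proper cocompact action (this is the reduction already anticipated in Lemma~\ref{lem:f_2_uniformly_cocompact}).

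\textbf{The contradiction.} For each $i$ pick a $0$--cube $v_i\in F_{i+1}\setminus F_i$ adjacent, along a $1$--cube $e_i\subseteq F_{i+1}$, to its gate $\gate_{F_i}(v_i)\in F_i$; the hyperplane $H_i$ dual to $e_i$ separates $v_i$ from $F_i$, hence crosses $F_{i+1}$ but not $F_i$, so $H_i$ crosses $F_j$ exactly when $j>i$, and the $H_i$ are pairwise distinct. Using the uniform diameter bound $D$, translate $e_i$ by an element of $\stabilizer_G(F_{i+1})$ so that it lies inside a fixed ball about $x$; that ball contains only finitely many $1$--cubes, so by pigeonhole infinitely many of the translated edges coincide, forcing infinitely many of the $H_i$, after translation, to equal one fixed hyperplane $H$ whose carrier meets a fixed bounded region near $x$. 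For those $i$ one then compares the local pictures near $x$ of the (now commonly contained) nested translates of $F_i$ and $F_{i+1}$: along the chain these local pictures lie in a finite set and, the chain being $\subseteq$--nested, eventually stabilise, which pins down the position of $H$ relative to them for all large $i$ and contradicts the strict growth forced by the strictly descending $\orth{F_i}$. I expect the main obstacle to be exactly this last comparison: matching up the translated local data and extracting an honest contradiction from the uniform bound of Lemma~\ref{lem:uniform_cocompactness_in_F} combined with the strict descent of orthogonal complements; the remaining steps are routine bookkeeping, and the descending case of Lemma~\ref{lem:chain_find} is handled by the same argument with the roles of $F_i$ and $F_{i+1}$ interchanged.
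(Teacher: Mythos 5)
Your plan starts the same way the paper does: you extract the uniform multiplicity bound from Lemma~\ref{lem:uniform_cocompactness_in_F}, feed it into the proof of Lemma~\ref{lem:134}, and deduce that $\stabilizer_G(F)$ has a fundamental domain of uniformly bounded diameter in $F$ for every $F\in\mathfrak F$. That much matches the paper. But from there the two arguments diverge, and the divergence is exactly where your proof has a gap.

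The paper does not pass through a chain argument for the rotational case at all. From the uniform bound $k$ on the number of $\stabilizer_G(F)$--orbits of cubes in $F$, it observes that the quotient $\stabilizer_G(F)\backslash F$ is a nonpositively curved cube complex (or orbi-complex, in the non-torsion-free case) with at most $k$ cells, equipped with a local isometry to the fixed compact quotient $G\backslash\cuco X$. Since there are only finitely many such complexes and only finitely many local isometries from each into a fixed compact target, and since a local isometry of NPC (orbi-)complexes determines the corresponding convex subcomplex of $\cuco X$ up to $G$-translation, one concludes that $G\backslash\mathfrak F$ is finite. Combined with the per-orbit multiplicity bound from Lemma~\ref{lem:uniform_cocompactness_in_F}, this gives finite multiplicity directly. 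Your ``local pictures'' step is a gesture at something like this, but it is not the same, and the gap is real: after you translate $e_i$ into a bounded ball by $g_i\in\stabilizer_G(F_{i+1})$, the translate $g_iF_i$ is no longer nested with the other $g_jF_j$, so the ``local pictures near $x$'' of the various $F_i$ and $F_{i+1}$ are not comparable in any obvious way, and nothing in the chain structure or the strict descent of $\orth{F_i}$ forces them to stabilise. Knowing that $F_i\cap B_{D}(x)$ takes only finitely many values does \emph{not} pin down $F_i$, since different elements of $\mathfrak F$ can agree on a ball while having different (non-commensurable) stabilisers and different global extents; it is precisely the rigidity of local isometries of NPC complexes that the paper uses to close this gap, and your proposal does not supply a substitute for it. So while your structural preliminaries (Lemma~\ref{lem:chain_find}, the strict descent of orthogonal complements via Lemma~\ref{lem:contravariant} and Corollary~\ref{cor:f_characterise}, unboundedness and nontriviality of the $\orth{F_i}$) are all correct, the final contradiction is not established, and I do not see how to extract it along the lines you sketch without essentially reconstructing the paper's finiteness-of-orbits argument.

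One more remark: the chain-based strategy you outline is the one the paper \emph{does} use for the weak height and NICC hypotheses in Section~6, but there the contradiction is obtained from group-theoretic control over the stabilisers of the $Z_m$, not from local geometry near a basepoint. It is not an accident that the paper proves the rotational case separately and by a different, more direct method.
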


\begin{proof}
By Lemma~\ref{lem:uniform_cocompactness_in_F}, there exists $s'<\infty$ so that 
for all $F\in\mathfrak F$, at most $s'$ distinct $G$--translates of $F$ can 
contain a given point.  By uniform properness of $F$ and the proof of 
Lemma~\ref{lem:134}, there exists $R<\infty$ so that each $F\in\mathfrak F$ has 
the following property: fix a basepoint $x\in F$.  Then for any $y\in F$, there 
exists $g\in\stabilizer_G(F)$ so that $\dist(gx,y)\leq R$.  Hence there exists 
$k$ so that for all $F$, the complex $F$ contains at most $k$ 
$\stabilizer_G(F)$--orbits of cubes. 

\textbf{Conclusion in the virtually torsion-free case:}  If $G$ is 
virtually torsion-free, then (passing to a finite-index 
torsion-free subgroup) $G\backslash\cuco X$ is a compact nonpositively-curved 
cube complex admitting a local isometry $\stabilizer_G(F)\backslash F\to 
G\backslash\cuco X$, where $\stabilizer_G(F)\backslash F$ is a 
nonpositively-curved cube complex with at most $k$ cubes.  Since there are only 
finitely many such complexes, and finitely many such local isometries, the 
quotient $G\backslash\mathfrak F$ is finite.  Since each $x\in\mathfrak S$ is 
contained in boundedly many translates of each $F\in\mathfrak F$, and there are 
only finitely many orbits in $\mathfrak F$, it follows that $x$ is contained in 
boundedly many elements of $\mathfrak F$, as required.

\textbf{General case:}  Even if $G$ is not virtually torsion-free, we can argue 
essentially as above, except we have to work with nonpositively-curved 
orbi-complexes instead of nonpositively-curved cube complexes.

First, let $\cuco Y$ be the first barycentric subdivision of $\cuco X$, so that 
$G$ acts properly and cocompactly on $\cuco Y$ and, for each cell $y$ of $\cuco 
Y$, we have that $\stabilizer_G(y)$ fixes $y$ pointwise (see~\cite[Chapter 
III.$\mathcal C$.2]{Bridson:NPC}.)  Letting $F'$ be the first barycentric 
subdivision of $F$, we see that $F'$ is a subcomplex of $\cuco Y$ with the same 
properties with respect to the $\stabilizer_G(F)$--action.  Moreover, $F'$ has 
at most $k'$ $\stabilizer_G(F)$--orbits of cells, where $k'$ depends on 
$\dimension\cuco X$ and $k$, but not on $F$.

The quotient $G\backslash\cuco Y$ is a complex of groups whose cells are 
labelled by finitely many different finite subgroups, and the same is true for 
$\stabilizer_G(F)\backslash F$.  Moreover, we have a morphism of complexes of 
groups $\stabilizer_G(F)\backslash F\to G\backslash\cuco Y$ which is injective 
on local groups.  Since $G$ acts on $\cuco X$ properly, the local groups in 
$G\backslash\cuco Y$ are finite.  Hence there are boundedly many cells in 
$\stabilizer_G(F)\backslash F$, each of which has boundedly many possible local 
groups (namely, the various subgroups of the local groups for the cells of 
$G\backslash\cuco Y$).  Hence there are finitely many choices of 
$\stabilizer_G(F)\backslash F$, and thus finitely many $G$--orbits in $\mathfrak 
F$, and we can conclude as above.
% Since the $G$--action on $\cuco X$ is proper, $N=\kernel(G\to\Aut(\cuco X))$ 
% is finite, and so the induced faithful action $G/N\to\Aut(\cuco X)$ is still 
% cocompact (and proper).  Moreover, this action is rotational.  Let $B,A$ be 
% disjoint hyperplanes.  Suppose that $\bar g\in G/N$ and choose $g\in G$ with 
% $\bar g=gN$.  Then $\bar g B=gnB=B$ for some $n\in N$, where the second equality 
% follows since $N$ acts trivially.  Similarly, $\bar gA=gA$.  In particular, if 
% $\bar gB=B$ and $\bar gA$ crosses $A$ or osculates with $A$, then $gB=B$ and 
% $gA$ crosses or osculates with $A$.  Hence the $G/N$--action is again 
% rotational, so we can and shall assume that the $G$--action on $\cuco X$ is 
% faithful. 
% 
%  
% 
% The pair $(G,\cuco X)$ is a compact orbicomplex --- there are finitely many 
% types of cells, whose singular loci are labelled by finitely many different 
% finite groups.  The inclusions $\stabilizer_G(F)\to G,F\to\cuco X$ induce 
% cellular maps of orbicomplexes 
\end{proof}

\subsection{Weak finite height and essential index conditions}

\begin{defn}[Weak finite height condition]\label{defn:weak_height}
Let $G$ be a group and $H\leq G$ a subgroup.
The subgroup $H$ satisfies the \emph{weak finite height condition} if the following 
holds.  Let $\{g_i\}_{i\in I}$ be an infinite subset of $G$ so that 
$H\cap\bigcap_{i\in J}H^{g_i}$ is infinite whenever $J\subset I$ is a finite 
subset. 
 Then there exists $i,j$ so that $H\cap H^{g_{i}}=H\cap H^{g_j}$.
\end{defn}

\begin{defn}[Noetherian Intersection of Conjugates Condition 
(NICC)]\label{defn:weak_height}
Let $G$ be a group and $H\leq G$ a subgroup.
The subgroup $H$ satisfies the \emph{Noetherian intersection of conjugates 
condition} (NICC) if the following 
holds.  Let $\{g_i\}_{i=1}^{\infty}$ be an infinite subset of distinct elements 
of  $G$ so that 
$H_n=H\cap\bigcap_{i=1}^nH^{g_i}$ is infinite for all $n$, then there exists $\ell>0$ so that for all $j,k\ge \ell$, $H_j$ and $H_k$ are commensurable.

\end{defn}

\begin{defn}[Conditions for 
hyperplanes]\label{defn:weakheight_hyp}
Let $G$ act on the CAT(0) cube complex $\cuco X$.  Then the action satisfies 
the \emph{weak finite height condition for hyperplanes} or respectively \emph{NICC for 
hyperplanes} if, for each hyperplane 
$B$ of 
$\cuco X$, the subgroup $\stabilizer_G(B)\leq G$ satisfies the weak finite height 
condition, or NICC, respectively.
\end{defn}

\begin{rem}\label{rem:finite_height}
Recall that $H\leq G$ has \emph{finite height} if there exists $n$ so that any 
collection of at least $n+1$ distinct left cosets of $H$ has the property that 
the intersection of the corresponding conjugates of $H$ is finite.  Observe 
that if $H$ has finite height, then it satisfies both the weak finite height condition 
and NICC, but 
that the converse does not hold.
\end{rem}

\begin{defn}[Essential index condition]\label{defn:essential_index_condition}
The action of $G$ on $\cuco X$ satisfies the \emph{essential index condition} 
if there exists $\zeta\in\naturals$ so that for all $F\in\mathfrak F$ we have 
$[\stabilizer_G(\widehat{F}):\stabilizer_G(F)]\leq\zeta$, 
where $\widehat{F}$ is the $\stabilizer_G(F)$--essential core 
of $F$.
\end{defn}

\subsection{Some examples where the auxiliary conditions are 
satisfied}\label{subsec:examples}
We now briefly consider some examples illustrating the various hypotheses.  Our 
goal here is just to illustrate the conditions in simple cases. 

\subsubsection{Special groups}
Stabilizers of hyperplanes in a right-angled Artin groups are simply 
special subgroups generated by the links of vertices. Let $\Gamma$ be a graph 
generating a right angled Artin group $A_\Gamma$ and let $\Lambda$ be any 
inducted subgraph. Then $A_\Lambda$ is a special subgroup of $A_\Gamma$, and 
$A_{\Lambda}^{g_i}$ has non-trivial intersection with $A_{\Lambda}^{g_j}$ if 
and 
only if $g_ig_j^{-1}$ commutes with some subgraph $\Lambda'$ of $\Lambda$. 
Further, their intersection is conjugate to the special subgroup 
$A_{\Lambda'}$. 
The weak finite height condition and NICC follow.  The essential index condition also 
holds since each $A_{\Lambda}$ acts essentially on the corresponding element of 
$\mathfrak F$, which is just a copy of the universal cover of the Salvetti 
complex of $A_{\Lambda}$.  In fact, these considerations show that hyperplane 
stabilisers in RAAGs have finite height.  It is easily verified that these 
properties are inherited by 
subgroups arising from compact local isometries to the Salvetti complex, 
reconfirming that (virtually) compact special cube complexes have factor 
systems in their universal covers.

\subsubsection{Non-virtually special lattices in products of trees}\label{subsubsec:bad}
The uniform lattices in products of trees 
from~\cite{WiseThesis,BurgerMozes,Rattagi,JanzenWise} do not 
satisfy the weak finite height condition, but they do satisfy NICC and the essential 
index condition.  

Indeed, let $G$ be a cocompact lattice in $\Aut(T_1\times T_2)$, where 
$T_1,T_2$ are locally finite trees.  If $A,B$ are 
disjoint hyperplanes, then $\gate_B(A)$ is a parallel copy of some $T_i$, i.e. 
$\gate_B(A)$ is again a hyperplane; otherwise, if $A,B$ cross, then 
$\gate_B(A)$ is a single point.  The essential index condition follows 
immediately, as does the NICC.  However, $G$ can be chosen so that there are 
pairs of parallel hyperplanes $A,B$ so that 
$\stabilizer_G(A)\cap\stabilizer_G(B)$ has arbitrarily large (finite) index in 
$\stabilizer_G(B)$, so the weak finite height condition fails.

% Finally, the action of any such $G$ on $T_1\times T_2$ is rotational.  Indeed, if $A,B$ are disjoint hyperplanes, then they 
% are both parallel copies of, say, $T_1$.  Let $e_B$ be the edge of $T_2$ dual to $B$.  Then the stabiliser $G_B$ of $B$ has 
% a finite index subgroup $K_B$ stabilising each hyperplane parallel to, and osculating with, $B$; the index depends on the 
% number of edges in $T_2$ incident to vertices of $B$.  Let $e_A$ be the edge of $T_2$ dual to $A$.  Then for all $k\in K_B$, 
% the hyperplane $kA$ is parallel to $A$ and cannot osculate with $A$ unless $e_A$ intersects $e_B$.  But in the latter case, 
% $kA=A$.

\subsubsection{Graphs of groups}
Let $\Gamma$ be a finite graph of groups, where each vertex group $G_v$ acts 
properly and cocompactly on a CAT(0) cube complex $\cuco X_v$ with a factor 
system $\mathfrak F_v$, and each edge group $G_e$ acts properly and cocompactly 
on a CAT(0) cube complex $\cuco X_e$, with a factor system $\mathfrak F_e$, so 
that the following conditions are satisfied, where $v,w$ are the vertices of 
$e$:
\begin{itemize}
     \item there are $G$--equivariant convex embeddings $\cuco X_e\to\cuco 
X_v,\cuco X_w$
\item these embedding induce injective maps $\mathfrak F_e\to\mathfrak 
F_v,\mathfrak F_w$.
\end{itemize}

\noindent If the action of $G$ on the Bass-Serre tree is acylindrical, then one 
can argue essentially as in the proof of~\cite[Theorem 8.6]{BHS:HHS_II} to 
prove that the resulting tree of CAT(0) cube complexes has a factor system.  
Moreover, ongoing work on improving~\cite[Theorem 8.6]{BHS:HHS_II} indicates 
that one can probably obtain the same conclusion in this setting without this 
acylindricity hypothesis.

Of course, one can imagine gluing along convex cocompact subcomplexes that 
don't belong to the factor systems of the incident vertex 
groups.  Also, we believe that the property of being cocompactly cubulated with 
a factor system is preserved by taking graph products, and that one can prove this by induction on the size of the graph by 
splitting along link subgroups.  This is 
the subject of recent work in the hierarchically hyperbolic 
setting; see~\cite{BerlaiRobbio}.

\subsubsection{Cubical small-cancellation quotients}
There are various ways of building more exotic examples of non-virtually 
special cocompactly cubulated groups using groups.  
In~\cite{JankiewiczWise}, Jankiewicz-Wise construct a group $G$ that 
is cocompactly cubulated but does not virtually split.  They start with a group 
$G'$ of the type discussed in~Remark~\ref{rem:stabilizer_special} and consider 
a small-cancellation quotient of the free product of several copies of $G'$.  
This turns out to satisfy strong cubical small-cancellation conditions 
sufficient to produce a proper, cocompact action of $G$ on a CAT(0) cube 
complex.  However, it appears that the small-cancellation conditions needed to 
achieve this are also strong enough to ensure that the NICC and essential index 
properties pass from $G'$ to $G$.  The key points are that $G$ is hyperbolic 
relative to $G'$, and each wall in $G$ intersects each coset of $G'$ in at most 
a single wall (Lemma 4.2 and Corollary 4.5 of~\cite{JankiewiczWise}). 

\subsubsection{A non-rotational example}\label{subsubsec:non_rotational}
Let $Y$ be a compact nonpositively-curved cube complex whose fundamental group $G$ has the following two properties:
\begin{itemize}
     \item $G$ has no proper finite-index subgroup;
     \item there exists $g\in G$ such that $g$ is represented by a based combinatorial path $L\to Y$ that is a local 
isometry.
\end{itemize}
The examples mentioned in Subsection~\ref{subsubsec:bad} show that we can take $Y$ to have universal cover the product of 
two trees.

Let $M'$ be a copy of $[0,2]\times[0,|L|]$, endowed with the product cubical structure in which $[0,2]$ and $[0,|L|]$ are 
regarded as cube complexes with $0$--cubes at integer points.  Let $M$ be the quotient of $M'$ obtained by identifying 
$[0,2]\times\{0\}$ and $[0,2]\times\{|L|\}$ by an orientation-reversing combinatorial isometry, so that $M$ is a M\"{o}bius 
strip tiled by squares.  Form a cube complex $X$ from $Y\times[0,1]\sqcup M$ by identifying $L$ with $\{1\}\times[0,|L|]$ 
(here we think of $L$ as a the path $L\to Y\times\{0\}\hookrightarrow Y\times[0,1]$).  Then $X$ is nonpositively curved, 
because $L\to Y\times[0,1]$ is a local isometry and  $L\to M$ is a local isometry.  

Let $\widetilde X$ be the universal cover of $X$, on which $\pi_X=G$ acts freely and cocompactly.  Now, the preimage of 
$Y\times\{\frac12\}$ under the covering map $\widetilde X\to X$ has a single component, which is a hyperplane that we call 
$B$.  By construction, the stabiliser of $B$ is $G$, which has no proper finite-index subgroups.  Now, let $A$ be a 
hyperplane of $\widetilde X$ projecting to an immersed hyperplane of $M$ dual to the image of $[0,1]\times\{0\}$.  Then 
$\neb(A)\cap\neb(gA)$ intersect (along an elevation of $L$), while $A$ and $B$ do not cross.  Hence the action of $G$ on 
$\widetilde X$ is non-rotational; because $G=\stabilizer_G(B)$ has no proper finite-index subgroup, it was sufficient to 
find a single hyperplane $A$ and a single $g\in G$ violating the condition in Definition~\ref{defn:forgetful}.

For a less self-contained example, it appears that $G$ can be chosen so that the action of $G$ on $Y$ is not rotational.  In 
fact~\cite{BurgerMozes} and~\cite{WiseThesis} contain examples where $G$ is acting geometrically on $T_1\times T_2$, with 
$T_1,T_2$ trees, but the induced actions on the two factors are not discrete.  In particular, it seems that for each edge 
$e$ of $T_1$, and any $r\geq0$, there is a vertex of $T_1$ at distance $r$ from $e$ so that the stabiliser of $e$ 
nontrivially permutes the edges incident to that vertex, and this seems to be an obstruction to the action being rotational. 
 (This is in stark contrast to the case where $G$ is virtually a product of free groups, in which case the action is 
rotational.)

\section{$\mathfrak F$ is closed under orthogonal complementation, given a 
group action}\label{subsec:closed_under_complementation}
We now assume that $\cuco X$ is a locally finite CAT(0) cube complex on which 
the group $G$ acts properly and cocompactly.  Let $\mathfrak F$ be the 
hyperclosure in $\cuco X$ and let $B$ be a constant so that each $0$--cube $x$ 
of $\cuco X$ lies in $\leq B$ combinatorial hyperplanes.

For convex subcomplexes $D,F$ of $\cuco X$, we write $F=\orth D$ to mean 
$F=\phi_D(\{f\}\times\orth D)$ for some $f\in D$, though we may abuse notation, 
suppress the $\phi_D$, and write e.g. $\{f\}\times \orth D$ to mean 
$\phi_D(\{f\}\times \orth D)$ when we care about the specific point $f$.

\begin{prop}[$\mathfrak F$ is closed 
under orthogonal complements]\label{prop:closed_uncer_complementation}
Let $G$ act on $\cuco X$ properly and cocompactly.  Suppose that \textbf{one of the following holds}:
\begin{itemize}
 \item the $G$--action on $\cuco X$ satisfies the weak finite height property for hyperplanes;
 \item the $G$--action on $\cuco X$ satisfies the essential index condition and the NICC for hyperplanes;
 \item $\mathfrak F$ is a factor system.
\end{itemize}

Let $A$ be a convex 
subcomplex of $\cuco X$.  Then $\orth 
A\in\mathfrak F$.  Hence $\stabilizer_G(\orth A)$ acts on $\orth A$ cocompactly.

In particular, for all $F\in\mathfrak F$, we have that $\orth F\in\mathfrak F$.
\end{prop}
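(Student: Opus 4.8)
The plan is to reduce, via Theorem~\ref{thm:factor_system_compact_orthocomp}, to exhibiting a compact convex subcomplex $C$ with $\orth C$ parallel to $\orth A$: then $\orth C\in\mathfrak F$, and $\orth A\in\mathfrak F$ because $\mathfrak F$ is closed under parallelism. By Lemma~\ref{lem:orthocomp}, a hyperplane crosses $\orth A$ exactly when it crosses every hyperplane that crosses $A$; so it is enough to find \emph{finitely many} hyperplanes $W_1,\dots,W_k$ crossing $A$ such that a hyperplane crosses all of $W_1,\dots,W_k$ if and only if it crosses every hyperplane crossing $A$. Indeed, taking $e_i\subseteq A$ a $1$--cube dual to $W_i$ and $C=Hull(e_1\cup\dots\cup e_k)$, the subcomplex $C$ is compact and convex and $C\subseteq A$, so every hyperplane crossing $C$ crosses $A$, and then Lemma~\ref{lem:orthocomp} gives $\orth C\parallel\orth A$. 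If only finitely many hyperplanes cross $A$ we are done immediately, so assume there are infinitely many, enumerate them $W_1,W_2,\dots$, choose $e_i\subseteq A$ dual to $W_i$, and set $A_n=Hull(e_1\cup\dots\cup e_n)\subseteq A$. Each $A_n$ is compact and convex, hence $\orth{A_n}\in\mathfrak F$; by Lemma~\ref{lem:contravariant} the complexes $\orth{A_n}$ (all based at a fixed $0$--cube of $A_1$) form a descending chain whose crossing hyperplanes decrease precisely to those of $\orth A$. So it suffices to show this chain is eventually constant.

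When $\mathfrak F$ is a factor system this is immediate: the basepoint lies in only finitely many elements of $\mathfrak F$, and each $\orth{A_n}$ contains it.

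In the remaining cases I pass to stabilisers. Writing $\orth{A_n}=\orth{C_n}$ and using the iterated-gate description of such a complex from the first paragraph of the proof of Theorem~\ref{thm:factor_system_compact_orthocomp} together with Lemma~\ref{lem:stab_proj} --- whose hypotheses hold since the combinatorial hyperplanes involved have cocompactly acting stabilisers (Proposition~\ref{prop:f_n_properties}) --- the group $\stabilizer_G(\orth{A_n})$ is commensurable with $\bigcap_W\stabilizer_G(\neb(W))$, the intersection over the finitely many hyperplanes $W$ crossing $A_n$; as $n$ grows this is a descending chain of subgroups. If some $\stabilizer_G(\orth{A_n})$ is finite, then $\orth A\subseteq\orth{A_n}$ is bounded and the chain is eventually constant by local finiteness of $\cuco X$, so assume every term is infinite. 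Grouping the $W$ into their finitely many $G$--orbits expresses $\stabilizer_G(\orth{A_n})$, up to commensurability, as a finite intersection of subgroups of the form $K\cap\bigcap_i K^{g_i}$ with $K$ a hyperplane stabiliser, each piece infinite. The NICC (respectively the weak height condition) now forces each piece, and hence --- a finite intersection of descending chains of subgroups that are eventually commensurable (respectively eventually constant) being again eventually commensurable (respectively eventually constant) --- the whole chain $\{\stabilizer_G(\orth{A_n})\}$ to stabilise up to commensurability. By Lemma~\ref{lem:comm_stab} the essential cores $\widehat{\orth{A_n}}$ are then eventually mutually parallel; feeding this back together with the essential index condition and the nesting $\orth{A_{n+1}}\subseteq\orth{A_n}$ (so that the Hausdorff distance from $\orth{A_n}$ to $\widehat{\orth{A_n}}$ is non-increasing, hence bounded, once the cores are parallel) one upgrades this to stabilisation of the $\orth{A_n}$ up to parallelism. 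Once the chain stabilises, $\orth A$ is parallel to some $\orth{A_N}\in\mathfrak F$, so $\orth A\in\mathfrak F$; cocompactness of $\stabilizer_G(\orth A)$ on $\orth A$ is then Proposition~\ref{prop:f_n_properties}, and the final assertion is the special case $A=F\in\mathfrak F$.

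The hard part is the end of the previous paragraph: converting the geometric descending chain into subgroup data of exactly the shape the NICC and weak height hypotheses govern --- which forces one to track the several $G$--orbits of hyperplanes that appear, and the fact that the passage from $A_n$ to $A_{n+1}$ introduces more than one new hyperplane --- and, above all, the return from ``stabilisers eventually commensurable'' to ``subcomplexes eventually parallel''. This last step is exactly where the essential index condition enters (with, in the weak height case, the finer information that the relevant conjugate-intersection subgroups become literally constant playing its role): one must compare each $\orth{A_n}$ with its essential core and argue that, the cores being parallel, the remaining non-essential hyperplanes of $\orth{A_n}$ cannot keep changing. A secondary, bookkeeping point is verifying that the iterated-gate description of $\orth{A_n}$ from Theorem~\ref{thm:factor_system_compact_orthocomp} genuinely feeds into Lemma~\ref{lem:stab_proj}, i.e.\ that all the combinatorial hyperplanes occurring have cocompactly acting stabilisers.
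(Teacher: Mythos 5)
Your overall strategy is genuinely different from the paper's, and parts of it are sound: the reduction to showing that the descending chain $\orth{A_n}$ (built from truncations $A_n\subseteq A$) stabilises, the factor-system case, and the observation that eventual commensurability of finitely many descending chains of subgroups is preserved under finite intersection are all correct. But the step you yourself flag as the hard part --- upgrading \emph{stabilisers eventually commensurable} to \emph{subcomplexes eventually parallel} --- is a real gap, not bookkeeping. Your assertion that once the cores $\widehat{\orth{A_n}}$ are parallel the Hausdorff distance from $\orth{A_n}$ to $\widehat{\orth{A_n}}$ is non-increasing is not justified: the cores are only parallel, not nested or equal, and nothing prevents them from drifting within $\orth{A_1}$; the paper gets around this by using the genuine nesting of its group-theoretic $K_m$ to invoke Proposition~\ref{prop:CScore}(iv) and pick literally equal cores, and you have no analogous nesting of the $\stabilizer_G(\orth{A_n})$. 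Even with the essential index condition in hand, a descending chain of mutually commensurable subgroups (compare $2^n\integers$ inside $\integers$) need not stabilise, so nothing you say turns the algebraic conclusion into stabilisation of the $\orth{A_n}$. And in the weak-height case you have no essential index condition at all; the weak-height hypothesis alone yields only one coincidence $H\cap H^{g_i}=H\cap H^{g_j}$, and the paper's device for promoting this to constancy along the chain (Claim~\ref{claim:WFH_1}) relies on freely passing to a subsequence of the $g_i$ and rebuilding the chain --- a freedom your construction lacks, because once the enumeration $W_1,W_2,\dots$ is fixed the set of hyperplanes crossing $A_n$ is forced (and includes separating hyperplanes you never chose), so you cannot simply discard indices.

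The paper never tries to show its chain $Z_m$ stabilises. Instead it extracts from the hypothesised failure a single $G$--orbit of hyperplanes $V_n$ together with auxiliary hyperplanes $U_m$ osculating $\orth A$ that witness each strict drop $Z_m\supsetneq Z_{m+1}$, and then derives an outright contradiction: a fixed cocompactly acting group $K_2$ (respectively $\Gamma$) confines the $U_m$ to finitely many orbits near a fixed orbit point, while orbit-invariance of the crossing pattern (Claim~\ref{claim:parallel_preserved}) makes it impossible for a single orbit of $U_m$'s to cross $V_m$ yet miss $V_{m'}$ for $m'>m$ as $m$ varies. This geometric witness to strict descent, together with the restriction to one $G$--orbit of hyperplanes so that the NICC/weak-height hypotheses apply directly, is precisely what your argument is missing.
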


We first need a lemma.

\begin{lem}\label{lem:proj_to_y}
Let $A\subset\cuco X$ be a convex subcomplex with $\diam(A)>0$ and let $x\in A^{(0)}$.  Let $H_1,\ldots,H_k$ 
be all of the hyperplanes intersecting $A$ whose carriers contain $x$, so that 
for each $i$, there is a combinatorial hyperplanes $H_i^+$ associated to $H_i$ 
with $x\in H_i^+$.  Let $Y=\cap_{i=1}^kH^+_i$. Let $\mathcal S$ be the set of all combinatorial 
hyperplanes associated to hyperplanes crossing $A$.  Then $$\orth A=\bigcap_{H'\in\mathcal S}\gate_{Y}(H'),$$ where $\orth 
A$ denotes the orthogonal complement of $A$ at $x$.  If $A$ is unique in its parallelism class, then $\orth A=\{x\}$.  
Finally, if $\diam(A)=0$, then $\orth A=\cuco X$.
\end{lem}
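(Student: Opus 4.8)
The plan is to show that $\orth A$ and $Q:=\bigcap_{H'\in\mathcal S}\gate_{Y}(H')$ are convex subcomplexes that both contain $x$ and are crossed by exactly the same hyperplanes of $\cuco X$; since two parallel convex subcomplexes that share a $0$--cube must coincide (by Lemma~\ref{lem:parallel_product} the shortest geodesic joining them is then trivial, so the product region degenerates), this gives $\orth A=Q$. Two degenerate cases are immediate. If $\diam(A)=0$, then $A$ is a single $0$--cube, every $0$--cube of $\cuco X$ is a parallel copy of $A$, so $P_A=\cuco X$ and $\orth A=\cuco X$; consistently $\mathcal S=\emptyset$, so the empty intersection is also $\cuco X$. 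If $A$ is unique in its parallelism class, then $P_A=Hull(A)=A$, forcing the abstract orthogonal complement to be a point and hence $\orth A=\{x\}$. So assume $\diam(A)>0$ from now on.

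The argument rests on three observations. First, $\orth A\subseteq Y$: since each $H_i$ crosses $A$ and $x\in\neb(H_i)$, inside $P_A\cong A\times\orth A$ the hyperplane $H_i$ restricts to $(H_i\cap A)\times\orth A$, which is constant in the $\orth A$--direction and whose carrier contains $\{x\}\times\orth A$; hence $\orth A\subseteq\neb(H_i)$ and lies on the side of $x$, i.e. $\orth A\subseteq H_i^+$, so $\orth A\subseteq\bigcap_iH_i^+=Y$. Second, every hyperplane $W$ crossing $Y$ is disjoint from $A$: because $Y\subseteq H_i^+$, crossing $Y$ forces $W$ to cross each combinatorial hyperplane $H_i^+$, hence each $H_i$; if moreover $W$ crossed $A$, then, taking the $0$--cube $b\in A\cap\neb(W)$ nearest $x$ in $A$, the first hyperplane $H_j$ met along a geodesic in $A$ from $x$ to $b$ (assuming $x\neq b$) would be one of the $H_i$, and its restriction to $A$ would be disjoint from $W\cap A$, so $H_j$ and $W$ would be disjoint, contradicting that $W$ crosses every $H_i$; thus $x\in\neb(W)$, forcing $W=H_i$ for some $i$ and contradicting that $W$ crosses $Y\subseteq H_i^+$. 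Third, $x\in\gate_{Y}(H')$ for every $H'\in\mathcal S$: writing $H'$ as a combinatorial hyperplane of a hyperplane $V$ with $V\cap A\neq\emptyset$, both combinatorial hyperplanes of $V$ meet $A$ (as $V$ crosses the convex subcomplex $A$), so fix $a'\in H'\cap A$; any hyperplane separating $a'$ from $x$ crosses $A$, so by the second observation it cannot cross $Y$ and therefore separates $a'$ from all of $Y$, while the reverse inclusion is automatic since $x\in Y$; hence the hyperplanes separating $a'$ from $x$ coincide with those separating $a'$ from $Y$, so $x=\gate_{Y}(a')\in\gate_{Y}(H')$. In particular $x\in Q$, and $x\in\orth A$ by definition.

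Next I would establish $\orth A\subseteq Q$: given $y\in\orth A$ and $H'\in\mathcal S$, with $H'\subseteq\neb(V)$ and $V\cap A\neq\emptyset$, if some hyperplane $W$ crossing $Y$ separated $y$ from $H'$, then by the third observation $W$ would not separate $x$ from $H'$, so $W$ would separate the two $0$--cubes $x,y$ of $\orth A$ and hence cross $\orth A$; by Lemma~\ref{lem:orthocomp}, $W$ would then cross every hyperplane meeting $A$, in particular $V$, and hence cross $H'$ (a hyperplane crosses a combinatorial hyperplane exactly when it crosses the associated hyperplane), contradicting that $W$ separates $y$ from $H'$. So $y\in\gate_{Y}(H')$ for every $H'$, i.e. $y\in Q$. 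Then I match crossing hyperplanes. If $W$ crosses $\orth A$, then, since $\orth A\subseteq Q$, it separates two $0$--cubes of $Q$ and so crosses $Q$. Conversely, if $W$ crosses $Q$, then $W$ crosses each $\gate_{Y}(H')$, so by Lemma~\ref{lem:cubical_gate_map} $W$ crosses $Y$ and every $H'\in\mathcal S$; the second observation gives that $W$ is disjoint from $A$, and from every parallel copy of $A$ too, since if $W$ crossed a parallel copy $A'$ then in $Hull(A\cup A')\cong A\times B$ it would be a hyperplane of the $A$--factor and so would cross $A$; and crossing all of $\mathcal S$ shows $W$ crosses every hyperplane meeting $A$, because each such hyperplane contributes a combinatorial hyperplane to $\mathcal S$. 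By Lemma~\ref{lem:orthocomp}, $W$ crosses $\orth A$. Hence $\orth A$ and $Q$ are parallel and share the $0$--cube $x$, so $\orth A=Q$.

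The one delicate point is the second observation --- that no hyperplane crossing $Y$ can cross $A$ --- after which both inclusions and the matching of crossing hyperplanes are routine applications of Lemmas~\ref{lem:orthocomp} and~\ref{lem:cubical_gate_map}, the product description of Lemma~\ref{lem:parallel_product}, and Definition~\ref{defn:orthocomp}. The second observation itself turns on two facts: that the hyperplanes crossing $A$ whose carriers contain $x$ are precisely $H_1,\dots,H_k$, and the standard ``first dual hyperplane along a geodesic in the convex subcomplex $A$'' argument, which shows such a hyperplane would be disjoint from $W$.
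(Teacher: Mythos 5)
Your proof is correct and follows essentially the same route as the paper's: both rest on the observation that no hyperplane crossing $Y$ can cross $A$ (your Observation~2, which is equivalent to the paper's terse claim that $Y\cap A=\{x\}$, both established by the same ``first hyperplane along a geodesic in $A$'' argument), and then both inclusions follow from Lemma~\ref{lem:orthocomp} together with Lemma~\ref{lem:cubical_gate_map}. Your write-up simply makes explicit several small steps the paper leaves implicit (that $\orth A\subseteq Y$, that a hyperplane crossing every hyperplane of $A$ cannot itself cross $A$, and the parallelism-plus-shared-basepoint conclusion), so it is a fleshed-out version of the same proof rather than a different one.
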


\begin{proof}
If $A$ is a single $0$--cube, then $\orth A=\cuco X$ by definition.  Hence suppose that $\diam(A)>0$.

Let $H'\in\mathcal S$.  
Since $\gate_Y(H'\cap A)\subseteq Y\cap A=\{x\}$, we see that $x\in\gate_Y(H')$.  Suppose that $y\in\orth A$.  Then every hyperplane $V$ separating 
$y$ from $x$ crosses each of the hyperplanes $H'$ crossing $A$, and thus crosses 
$Y$, whence $y\in\gate_Y(H')$ for each $H'\in\mathcal S$.  Thus 
$\orth{A}\subseteq\bigcap_{H'\in\mathcal S}\gate_{Y}(H')$.  On the other hand, 
suppose that $y\in\bigcap_{H'\in \mathcal S}\gate_{Y}(H')$.  Then every 
hyperplane $H'$ separating $x$ from $y$ crosses every hyperplane crossing $A$, 
so $y\in\orth A$.  This completes the proof that $\orth A=\bigcap_{H'\in\mathcal S}\gate_{Y}(H')$.

Finally, $A$ is unique in its parallelism class if and only if $\orth A=\{x\}$, by definition of $\orth A$.
\end{proof}

We can now prove the proposition:

\begin{proof}[Proof of Proposition~\ref{prop:closed_uncer_complementation}]
The proof has several stages.

\textbf{Setup using Lemma~\ref{lem:proj_to_y}:} If $A$ is a single point, then 
$\orth A=\cuco X$, which is in $\mathfrak F$ by definition.  Hence suppose 
$\diam(A)>0$, and let $H_1,\ldots,H_k$, $x\in A$, $Y\subset\cuco X$, and 
$\mathcal S$ be as in Lemma~\ref{lem:proj_to_y}, so $$\orth 
A=\bigcap_{H'\in\mathcal S}\gate_{Y}(H').$$

Thus, to prove the proposition, it is sufficient to produce a finite 
collection $\mathfrak H$ of hyperplanes $H'$ crossing $A$ so that 
$$\bigcap_{H'\in\mathcal S}\gate_{Y}(H')=\bigcap_{H'\in\mathfrak 
H}\gate_{Y}(H').$$  Indeed, if there is such a collection, then we have shown 
$\orth A$ to be the intersection of finitely many elements of $\mathfrak F_k$, 
whence $\orth A\in\mathfrak F_{k+|\mathfrak H|}$, as required.  Hence suppose 
for a contradiction that for any finite collection $\mathfrak H\subset\mathcal 
S$, we have $$\bigcap_{H'\in\mathcal 
S}\gate_{Y}(H')\subsetneq\bigcap_{H'\in\mathfrak H}\gate_{Y}(H').$$

\textbf{Bad hyperplanes crossing $Y$:}  For each $m$, let $\mathcal H_m$ be the 
(finite) set of hyperplanes $H'$ 
intersecting $\neb_m^A(x)=A\cap\neb_m(x)$ (and hence satisfying $x\in\gate_Y(H')$).

Consider the collection $\mathcal B_m$ of all hyperplanes $W$ such that $W$ 
crosses each element of $\mathcal H_m$ and $W$ crosses $Y$, but $W$ fails to 
cross $\orth A$.  (This means that there exists $j>m$ and some $U\in\mathcal 
H_j$ so that $W\cap U=\emptyset$.)

Suppose that there exists $m$ so that $\mathcal B_n=\emptyset$ for $n>m$.  
Then we can take $\mathcal H_m$ to be our desired set $\mathfrak H$, and we 
are done.    Hence suppose that $\mathcal B_m$ is nonempty for arbitrarily large $m$.  Note 
that if $U\in\mathcal B_m$, then $\neb_m^A(x)$ is parallel into $U$, but there 
exists $j>m$ so that $\neb_j^A(x)$ is not parallel into $U$.  (Here 
$\neb_j^A(x)$ denotes the $j$--ball in $A$ about $x$.)

\textbf{Elements of $\mathcal B_m$ osculating $\orth A$:}  
Suppose that $U\in\mathcal B_m$, so that $\neb_j^A(x)$ is not parallel into $U$ 
for some $j>m$.  Suppose  that $U'$ is a hyperplane separating $U$ from $\orth 
A$.  

Then $U'$ separates $U$ from $x$ so, since $U$ intersects $Y$ and $x\in Y$, 
we have that $U'$ intersects $Y$.  Since $\gate_Y(A)=\{x\}$ is not crossed by 
any hyperplanes, $U'$ cannot cross $A$.  Hence $U'$ separates $U$ from 
$\neb^A_m(x)$, so $\neb^A_m(x)$ is parallel into $U'$ (since it is parallel 
into $U$).  On the other hand, since $U'$ separates $U$ from $\orth A$, $U'$ 
cannot cross $\orth A$, and thus fails to cross some hyperplane crossing $A$.  
Hence $U'\in\mathcal B_m$.  Thus, for each $m$, there exists $U_m\in\mathcal 
B_m$ whose carrier intersects $\orth A$.  Indeed, we have shown that any 
element of $\mathcal B_m$ as close as possible to $\orth A$ has this property.

Hence we have a sequence of radii $r_n$ and hyperplanes 
$U_n$ so that:
\begin{itemize}
     \item $U_n$ crosses $Y$;
     \item $\neb(U_n)\cap\orth A\neq\emptyset$;
     \item $\neb_{r_n}^A(x)$ is parallel into $U_n$ for all $n$;
     \item $\neb_{r_{n+1}}^A(x)$ is not parallel into $U_n$, for all $n$.
\end{itemize}

The above provides a sequence $\{V_n\}$ of hyperplanes so that for each $n$:
\begin{itemize}
 \item $V_n$ crosses $A$;
 \item $V_n$ crosses $U_m$ for $m\geq n$;
 \item $V_n$ does not cross $U_m$ for $m<n$.
\end{itemize}

\noindent Indeed, for each $n$, choose $V_n$ to be a 
hyperplane crossing $\neb_{r_{n+1}}^A(x)$ but not crossing 
$U_n$.  For each $n$, let $\bar V_n$ be one of the two combinatorial 
hyperplanes (parallel to $V_n$) bounding $\neb(V_n)$.

\begin{claim}\label{claim:parallel_into}
For each $n$, the subcomplex $\orth A$ is parallel into $\bar V_n$.
\end{claim}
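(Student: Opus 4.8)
The plan is to obtain the claim essentially for free from Lemma~\ref{lem:orthocomp}, using only the first property recorded for the $V_n$, namely that $V_n$ crosses $A$; none of the other bullets about the $V_n$, nor anything about the $U_m$, is needed here. Throughout, $\orth A$ denotes the orthogonal complement $\phi_A(\{x\}\times\orth A)$ of $A$ at the chosen basepoint $x$, which is a convex subcomplex of $\cuco X$.

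First I would invoke Lemma~\ref{lem:orthocomp}: a hyperplane $W$ crosses $\phi_A(\{x\}\times\orth A)$ if and only if $W$ is disjoint from every parallel copy of $A$ but crosses every hyperplane $V$ with $V\cap A\neq\emptyset$. Since $V_n$ crosses $A$ by construction, every hyperplane crossing $\orth A$ crosses $V_n$. By Remark~\ref{rem:comb_hyp}, the hyperplanes crossing $V_n$ are exactly those crossing the combinatorial hyperplanes bounding $\neb(V_n)$, in particular $\bar V_n$ (and this is independent of which of the two combinatorial hyperplanes was chosen as $\bar V_n$). Hence every hyperplane crossing $\orth A$ crosses $\bar V_n$, which is precisely the statement $\orth A\parint\bar V_n$ by Definition~\ref{defn:parallel}. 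If an explicit parallel subcomplex of $\bar V_n$ is desired, one takes $\gate_{\bar V_n}(\orth A)$, which by Lemma~\ref{lem:cubical_gate_map} is crossed exactly by the hyperplanes crossing both $\orth A$ and $\bar V_n$, i.e. by all hyperplanes crossing $\orth A$, so it is parallel to $\orth A$ by Lemma~\ref{lem:parallel_product}.

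There is no real obstacle here: the claim is a bookkeeping consequence of Lemma~\ref{lem:orthocomp}, isolated only because the parallelism $\orth A\parint\bar V_n$ will be fed into the chain-building argument that follows, where the remaining properties of the $V_n$ and the hyperplanes $U_m$ do the substantive work. The only thing to be careful about while writing it is to keep everything phrased in terms of which hyperplanes cross the subcomplexes involved, so that the passages among $V_n$, $\bar V_n$, and $\orth A$ stay at the level of Remark~\ref{rem:comb_hyp} and Lemma~\ref{lem:orthocomp}.
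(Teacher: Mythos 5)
Your proof is correct and matches the paper's argument: both observe via the definition of $\orth A$ (your citation of Lemma~\ref{lem:orthocomp} makes this explicit) that every hyperplane crossing $\orth A$ crosses $V_n$, hence $\bar V_n$. The only difference is that you are slightly more careful in passing from the hyperplane $V_n$ to the combinatorial hyperplane $\bar V_n$ via Remark~\ref{rem:comb_hyp}, whereas the paper elides this.
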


\begin{proof}[Proof of Claim~\ref{claim:parallel_into}]
Let $H$ be a hyperplane crossing $\orth A$.  Then, by definition of $\orth A$, 
$H$ crosses each hyperplane crossing $A$.  But $V_n$ crosses $A$, so $H$ must 
also cross $V_n$.  Thus $\orth A$ is parallel into $V_n$.
\end{proof}

Next, since $G$ acts on $\cuco X$ cocompactly, it acts with finitely many 
orbits of hyperplanes, so, by passing to a subsequence (but keeping our 
notation), we can assume that there exists a hyperplane $V$ crossing $A$ and 
elements $g_n\in G,n\ge1$ so that $V_n=g_nV$ for $n\ge1$.  For simplicity, we 
can assume $g_1=1$.

Next, we can assume, after moving the basepoint $x\in A$ a single time, that 
$x\in\bar V$, i.e. $\bar V$ is among the $k$ combinatorial hyperplanes whose 
intersection is $Y$.  This assumption is justified by the fact that $\mathfrak 
F$ is closed under parallelism, so it suffices to prove that any given parallel 
copy of $\orth A$ lies in $\mathfrak F$.  Thus we can and shall assume 
$Y\subset\bar V$.

For each $m$, consider the inductively defined subcomplexes $Z_1=\bar V$ and for each $m\ge 2$, $Z_m=\gate_{\bar V}(\gate_{g_1\bar V}(\cdots (\gate_{g_{m-1}\bar V}(g_m\bar 
V))\cdots))$, which is an element of $\mathfrak F$.

\begin{claim}\label{claim:orth_contained}
For all $m\ge1$, we have $\orth A\subseteq Z_m$.
\end{claim}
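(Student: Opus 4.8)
The plan is to prove $\orth A\subseteq Z_m$ directly for each $m\ge1$, by combining the statement that $\orth A$ is parallel into $Z_m$ with the statement that $\orth A$ actually meets $Z_m$, and then invoking the standard transfer principle: if $P,Q\subseteq\cuco X$ are convex subcomplexes with $P\parint Q$ and $P\cap Q\ne\emptyset$, then $P\subseteq Q$. (For the principle: take $v\in P\cap Q$ and $w\in P^{(0)}$; a combinatorial geodesic in $P$ from $v$ to $w$ is geodesic in $\cuco X$ by convexity of $P$ and crosses only hyperplanes meeting $P$, hence only hyperplanes meeting $Q$; starting from $v\in Q$, a median argument pushes each successive vertex into the convex set $Q$, so $w\in Q$, and $P$ full and convex then gives $P\subseteq Q$.)

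\textbf{$\orth A$ is parallel into $Z_m$.} By Lemma~\ref{lem:orthocomp}, every hyperplane crossing $\orth A$ crosses every hyperplane crossing $A$; in particular it crosses $V_1,\dots,V_m$, hence also crosses the combinatorial hyperplanes $g_1\bar V=\bar V,\,g_2\bar V,\dots,g_m\bar V$, since each $g_i\bar V$ is crossed by exactly the hyperplanes that cross $V_i$. Applying Lemma~\ref{lem:cubical_gate_map} repeatedly to the iterated gate $Z_m=\gate_{\bar V}(\gate_{g_1\bar V}(\cdots\gate_{g_{m-1}\bar V}(g_m\bar V)\cdots))$, the hyperplanes crossing $Z_m$ are precisely those crossing all of $\bar V,g_2\bar V,\dots,g_m\bar V$ (using $g_1=1$). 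Hence every hyperplane crossing $\orth A$ crosses $Z_m$, i.e. $\orth A\parint Z_m$. Since moreover $x\in\orth A$ and $x\in\bar V=Z_1$, the transfer principle immediately gives the case $m=1$, and in general reduces us to showing that $\orth A$ meets $Z_m$.

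\textbf{$\orth A$ meets $Z_m$ (the crux).} Suppose not; then, as $\orth A\parint Z_m$, some hyperplane $H$ separates $\orth A$ from $Z_m$: say $\orth A$ (hence $x$) lies in the halfspace $H^-$ and $Z_m$ lies in $H^+$, with $H$ crossing neither. Since $\orth A,Z_m\subseteq\bar V$, the hyperplane $H$ crosses $\bar V$; and since $H$ does not cross $Z_m$ it must fail to cross some $g_j\bar V$ with $2\le j\le m$, hence fails to cross $V_j$. Now invoke the auxiliary hyperplane $U_{m+1}$ from the construction: by the interleaving property ($V_i$ crosses $U_{m+1}$ for every $i\le m$), $U_{m+1}$ crosses each of $\bar V,g_2\bar V,\dots,g_m\bar V$ and hence crosses $Z_m\subseteq H^+$, while $\neb(U_{m+1})\cap\orth A\ne\emptyset$. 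If $H$ did not cross $U_{m+1}$, then --- the hyperplanes meeting $\neb(U_{m+1})$ being $U_{m+1}$ together with the hyperplanes crossing it, none of which is $H$ (note $H\ne U_{m+1}$, since $U_{m+1}$ crosses $Z_m$) --- the whole carrier $\neb(U_{m+1})$ would lie in $H^+$, contradicting $\neb(U_{m+1})\cap\orth A\ne\emptyset$. Therefore $H$ crosses $U_{m+1}$. This places $H$, $V_j$, and $U_{m+1}$ in a rigid configuration: $H$ and $V_j$ are disjoint but both cross $U_{m+1}$, the hyperplane $V_j$ crosses $\neb^A_{r_{j+1}}(x)$, and $H$ separates $\orth A$ from $Z_m$; working out which sides of $H$ the hyperplane $V_j$, the carrier $\neb(U_{m+1})$, and the part of $A$ met by $V_j$ lie on then contradicts the facts that $V_j$ was chosen as close to $x$ as possible (crossing $\neb^A_{r_{j+1}}(x)$ but not $U_j$) and that $U_{m+1}$ was chosen as close to $\orth A$ as possible.

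The main obstacle is precisely this last step: extracting a contradiction from the configuration of $H$, $U_{m+1}$, and $V_j$, using the detailed way the sequences $\{V_n\}$ and $\{U_n\}$ were built (the nesting of the balls $\neb^A_{r_n}(x)$ and the minimal-distance choices). Everything else --- the transfer principle and the parallel-into statement --- is routine hyperplane bookkeeping via Lemmas~\ref{lem:orthocomp} and~\ref{lem:cubical_gate_map}.
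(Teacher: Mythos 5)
Your proposal correctly establishes the \emph{parallel-into} half of the argument: by Lemma~\ref{lem:orthocomp} every hyperplane crossing $\orth A$ crosses every hyperplane crossing $A$, hence crosses each $V_n$ (and $V_1=V$, hence $\bar V$), and by Lemma~\ref{lem:cubical_gate_map} these are exactly the hyperplanes crossing $Z_m$. So $\orth A\parint Z_m$, and the ``transfer principle'' you state (if $P\parint Q$ and $P\cap Q\ne\emptyset$ then $P\subseteq Q$) is correct. The base case $m=1$ is also fine, since $\orth A\subseteq Y\subseteq\bar V=Z_1$.

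The problem is that your argument is, by your own admission, not complete: you never close the step ``$\orth A$ meets $Z_m$'' for $m\geq2$. This is not a small detail. As your own discussion makes clear, knowing that $\orth A\parint Z_m$ and that $\orth A\subseteq\bar V\supseteq Z_m$ is \emph{not} enough to conclude $\orth A\subseteq Z_m$: in general, a convex subcomplex $P\subseteq Q$ that is parallel into $R$ need not lie in $\gate_Q(R)$ (take $Q,R$ two perpendicular edges of a square sharing a corner and $P$ the opposite endpoint of $Q$). So some genuine input is needed to pin down the \emph{location} of $\orth A$ inside $\bar V$. The paper's own argument at this point is extremely terse (``$\orth A\subset\bar V$, and $\orth A\parint g_n\bar V$ for each $n$, so by induction $\orth A\subset Z_m$''), and it does not spell out the mechanism either; so your instinct that something extra is needed is reasonable, but you have not actually supplied it. The natural route --- the one the paper is implicitly leaning on via Lemma~\ref{lem:proj_to_y} --- is to use the identity $\orth A=\bigcap_{H'\in\mathcal S}\gate_Y(H')$ to show that the relevant separating hyperplanes cannot cross $\bar V$ (not just that they cannot cross $Y$); your attempted contradiction via $H$, $U_{m+1}$, $V_j$ is aimed in roughly that direction, but you stop before extracting the contradiction, and the configuration you set up does not obviously force one. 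As written, this is a genuine gap: the claim is asserted but not proved.
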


\begin{proof}[Proof of Claim~\ref{claim:orth_contained}]
Indeed, since $\orth A\subset Y$ by definition, and $Y\subset\bar V$, we have 
$\orth A\subset\bar V$.  On the other hand, for each $n$, 
Claim~\ref{claim:parallel_into} implies that $\orth A$ is parallel into 
$g_n\bar V$ for all $n$, so by induction, $\orth A\subset Z_m$ for all $m\ge 1$, as required.
\end{proof}

\begin{claim}\label{claim:strictly descend}
For all $m\ge1$, we have $Z_m\supsetneq Z_{m+1}$.  
\end{claim}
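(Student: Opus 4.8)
The plan is to establish the two halves of $Z_m\supsetneq Z_{m+1}$ in turn: first the inclusion $Z_{m+1}\subseteq Z_m$, and then its properness, by exhibiting a single hyperplane that crosses $Z_m$ but misses $Z_{m+1}$.

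For the inclusion, I would use that gate maps are cubical (Lemma~\ref{lem:cubical_gate_map}) and hence monotone: $A\subseteq B$ implies $\gate_C(A)\subseteq\gate_C(B)$. Inspecting the definitions, $Z_{m+1}$ and $Z_m$ are obtained by applying the \emph{same} string of projections $\gate_{\bar V}\circ\gate_{g_1\bar V}\circ\cdots\circ\gate_{g_{m-1}\bar V}$ to $\gate_{g_m\bar V}(g_{m+1}\bar V)$ and to $g_m\bar V$, respectively; since $\gate_{g_m\bar V}(g_{m+1}\bar V)\subseteq g_m\bar V$ trivially, monotonicity of each gate gives $Z_{m+1}\subseteq Z_m$ (the case $m=1$ being immediate, as $Z_2=\gate_{\bar V}(\cdots)\subseteq\bar V=Z_1$).

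For properness, I would first record, by applying the second part of Lemma~\ref{lem:cubical_gate_map} inductively, that the hyperplanes crossing $Z_j$ are exactly those crossing all of $\bar V=g_1\bar V,\,g_2\bar V,\,\dots,\,g_j\bar V$. I then claim the hyperplane $U_m$ from the constructed sequence is such a hyperplane for $j=m$ but not for $j=m+1$. On the one hand $U_m$ crosses $Y\subseteq\bar V$, and by construction $U_m$ crosses $V_j$ for every $j\le m$ (since $V_j$ was arranged to cross $U_n$ for all $n\ge j$); because $g_j\bar V$ is a combinatorial hyperplane associated to $V_j=g_jV$, Remark~\ref{rem:comb_hyp} gives that $U_m$ crosses $g_j\bar V$ for all $j\le m$, so $U_m$ crosses $Z_m$. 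On the other hand $U_m$ does not cross $V_{m+1}$ (again by the construction, $V_{m+1}$ crosses no $U_n$ with $n<m+1$), so $U_m$ misses $g_{m+1}\bar V$ and hence misses $Z_{m+1}$. Since $Z_{m+1}$ is a nonempty convex subcomplex of $Z_m$ — nonempty because $x\in\orth A\subseteq Z_{m+1}$ by Claim~\ref{claim:orth_contained} — lying entirely in one halfspace of $U_m$, while $U_m$ separates two $0$--cubes of $Z_m$, the $0$--cube on the far side lies in $Z_m\setminus Z_{m+1}$; combined with the inclusion above, $Z_m\supsetneq Z_{m+1}$.

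I do not expect a genuine obstacle here: the argument is bookkeeping with the crossing relations among the sequences $\{V_n\},\{U_n\}$ produced earlier, the real work having gone into producing those sequences with their crossing pattern. The two points that need care are (i) keeping straight that ``crossing'' is insensitive to the choice between the two combinatorial hyperplanes associated to a given hyperplane, so that ``$U_m$ crosses $V_j$'' does translate to ``$U_m$ crosses $g_j\bar V$''; and (ii) using Claim~\ref{claim:orth_contained} to know $Z_{m+1}\neq\emptyset$, so that the halfspace dichotomy for the non-crossing hyperplane $U_m$ can legitimately be applied.
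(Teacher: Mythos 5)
Your proposal is correct and follows essentially the same route as the paper's proof: monotonicity of the gate maps gives the inclusion, and $U_m$ is the witness for properness because the crossing pattern of the $(U_n)$ and $(V_n)$ sequences forces $U_m$ to cross $Z_m$ but miss $g_{m+1}\bar V$, hence miss $Z_{m+1}$. You are in fact slightly more explicit than the paper, which only records that $U_m$ crosses $\bar V$ and leaves implicit that $U_m$ crosses $g_j\bar V$ for all $j\le m$; your spelling out of that step (via the rule ``$V_n$ crosses $U_m$ iff $m\ge n$'' together with Remark~\ref{rem:comb_hyp}) is exactly the bookkeeping needed.
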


\begin{proof}[Proof of Claim~\ref{claim:strictly descend}]
For each $m$, the hyperplane $U_m$ crosses $Y$, by construction.  Since 
$Y\subseteq\bar V$, this implies that $U_m$ crosses $\bar V$.  On the other 
hand, $U_m$ does not cross $\bar V_{m+1}$.  This implies that $Z_m\neq 
Z_{m+1}$.  On the other hand, $Z_{m+1}\subset Z_m$ just by definition.
\end{proof}

Let $K_m=\stabilizer_G(V)\cap\bigcap_{n=1}^m\stabilizer_G(V)^{g_n}$; by Lemma~\ref{lem:stab_proj}  $K_m$ has finite index in $\stabilizer_G(Z_m)$.  Thus, since $Z_m\in\mathfrak F$, we 
see that $K_m$ acts on $Z_m$ cocompactly.  Claim~\ref{claim:strictly descend} implies that no $Z_m$ is compact, for 
otherwise we would be forced to have $Z_m=Z_{m+1}$ for some $m$. Since $K_m$ 
acts on $Z_m$ cocompactly, it follows that $K_m$ is infinite for all $m$.

Thus far, we have not used any of the auxiliary hypotheses.  We now explain how 
to derive a contradiction under the weak finite height hypothesis.

\begin{claim}\label{claim:WFH_1}
Suppose that the $G$--action on $\cuco X$ satisfies weak finite height for 
hyperplanes.  Then, after passing to a subsequence, we have $K_m=K_2$ for 
all $m\ge2$. 
\end{claim}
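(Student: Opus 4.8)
The plan is to extract the subsequence by a pigeonhole argument powered by the weak height condition applied to $K:=\stabilizer_G(V)$. Write $L_n=K\cap K^{g_n}$ for $n\ge 2$; recall $g_1=1$, so $K^{g_1}=K$ contributes nothing and $K_m=K\cap\bigcap_{n=2}^m K^{g_n}$ for $m\ge2$. Two standing facts will be used. First, we established above that $K_m$ is infinite for every $m$. Second, for any finite $J\subseteq\{2,3,\dots\}$ we have $K\cap\bigcap_{i\in J}K^{g_i}\supseteq K_{\max J}$, so $K\cap\bigcap_{i\in J}K^{g_i}$ is infinite for \emph{every} finite $J$; this property is evidently inherited by every subsequence of $\{g_n\}_{n\ge2}$, since a finite subset of a subsequence is a finite subset of $\{2,3,\dots\}$.

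I would then reduce the claim to producing a subgroup $L$ with $L_n=L$ for all $n$ in some infinite index set $S\subseteq\{2,3,\dots\}$. Given such $L$ and $S$, relabel the subsequence $(g_n)_{n\in S}$ as $g_2,g_3,\dots$ (retaining $g_1=1$); then for every $m\ge2$,
\[
K_m=K\cap\bigcap_{n=2}^m K^{g_n}=\bigcap_{n=2}^m\bigl(K\cap K^{g_n}\bigr)=\bigcap_{n=2}^m L_n=L,
\]
whence $K_m=K_2$ for all $m\ge2$, as claimed.

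To produce $L$, argue by contradiction and assume no value of $L_n$ occurs infinitely often. Then each value occurs only finitely often, and since $\{g_n\}_{n\ge2}$ is infinite there are infinitely many distinct values; a routine greedy extraction then gives a subsequence $(g_{n_i})_{i\ge2}$ along which $i\mapsto L_{n_i}$ is injective. By the second standing fact, $K\cap\bigcap_{i\in J}K^{g_{n_i}}$ is infinite for every finite $J$, so the weak height condition for $V$ (Definition~\ref{defn:weak_height}, with $H=K=\stabilizer_G(V)$ and the infinite subset $\{g_{n_i}\}\subseteq G$) supplies $i\ne j$ with $K\cap K^{g_{n_i}}=K\cap K^{g_{n_j}}$, i.e. $L_{n_i}=L_{n_j}$, contradicting injectivity. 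Hence some $L$ occurs infinitely often, completing the proof.

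The main point needing care --- more bookkeeping than obstacle --- is the legitimacy of invoking the weak height condition: it requires $\{g_{n_i}\}$ to be a genuine infinite subset of $G$ with all finite conjugate-sub-intersections infinite. The latter is the second standing fact; the former holds because the $V_n=g_nV$ are pairwise distinct (distinct crossing patterns with the $U_m$), so the $g_n$ are pairwise distinct. Apart from this, the step is pure combinatorics, the only geometric input being the already-established infiniteness of the $K_m$.
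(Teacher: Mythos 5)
Your proof is correct and takes essentially the same approach as the paper: a pigeonhole argument (the paper packages it as an equivalence relation with a dichotomy on class sizes, you phrase it as a single contradiction from the assumption that no value of $L_n$ repeats infinitely often), powered by the weak height condition to rule out the "all distinct" case. The one place your write-up is slightly more careful than the paper's is the explicit verification that the $g_n$ form a genuine infinite subset of $G$ (via the distinct crossing patterns of the $V_n$ with the $U_m$), which the paper leaves implicit.
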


\begin{proof}[Proof of Claim~\ref{claim:WFH_1}]
Let $I\subset\naturals$ be a finite set and let $m=\max I$.  Then 
$\bigcap_{n\in I}\stabilizer_G(V)^{g_i}$ contains $K_m$, and is thus infinite, 
since $K_m$ was shown above to be infinite.  Hence, since $\stabilizer_G(V)$ 
satisfies the weak finite height condition, there exist distinct $m,m'$ so that 
$\stabilizer_G(V)\cap\stabilizer_G(V)^{g_m}
=\stabilizer_G(V)\cap\stabilizer_G(V)^{g_m'}$.  

Declare $m\sim m'$ if $\stabilizer_G(V)\cap\stabilizer_G(V)^{g_m}
=\stabilizer_G(V)\cap\stabilizer_G(V)^{g_m'}$, so that $\sim$ is an equivalence 
relation on $\naturals$.  If any $\sim$--class $[m]$ is infinite, then we can 
pass to the subsequence $[m]$ and assume that 
$\stabilizer_G(V)\cap\stabilizer_G(V)^{g_n}
=\stabilizer_G(V)\cap\stabilizer_G(V)^{g_n'}$ for all $n$.  Otherwise, if every 
$\sim$--class is finite, then there are infinitely many $\sim$--classes, and we 
can pass to a subsequence containing one element from each $\sim$--class.  This 
amounts to assuming that $\stabilizer_G(V)\cap\stabilizer_G(V)^{g_m}
\neq\stabilizer_G(V)\cap\stabilizer_G(V)^{g_m'}$ for all distinct $m,m'$, but 
this contradicts weak finite height, as shown above.

Hence, passing to a subsequence, we can assume that 
$\stabilizer_G(V)\cap\stabilizer_G(V)^{g_m}
=\stabilizer_G(V)\cap\stabilizer_G(V)^{g_m'}$ for all $m,m'\ge2$, and hence 
$K_m=K_2$ for all $m$.
\end{proof}

From Claim~\ref{claim:WFH_1} and the fact that $K_m$ stabilises $Z_m$ for each 
$m$, we have that $K_2$ stabilises each $Z_m$.  Moreover, $K_2$ acts on $Z_m$ 
cocompactly.

The inclusion $Z_{m+1}\hookrightarrow Z_m$ descends to an inclusion $Z_{m+1}/K_2\hookrightarrow Z_m/ K_2$, and 
since the latter spaces are compact, we must have some $M$ such that $Z_m/K_2=Z_M/K_2$ for $M\geq m$.  Hence $Z_m=Z_M$ for 
all $m\geq M$.  This contradicts Claim~\ref{claim:strictly descend}.  Hence the the claimed sequence of $U_m$ cannot exist, 
whence $\orth  A\in\mathfrak F$.

Having proved the proposition under the weak finite height assumption, we now 
turn to the other hypotheses.  Let $\{Z_m\}$ and $\{K_m\}$ be as above.

\begin{claim}\label{claim:NICC}
Suppose that the $G$--action on $\cuco X$ satisfies the NICC and the essential 
index condition.  Then there exists $\ell$ so that 
$\stabilizer_G(Z_m)=\stabilizer_G(Z_\ell)$ for 
all $m\ge\ell$. 
\end{claim}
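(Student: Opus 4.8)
The plan is to feed the NICC into the setup in the same way that the weak finite height hypothesis is fed into Claim~\ref{claim:WFH_1}, thereby obtaining that the groups $\stabilizer_G(Z_m)$ are pairwise commensurable for $m$ large, and then to rigidify commensurability to equality using the essential index condition together with local finiteness of $\cuco X$.

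First I would record what is already available. By Lemma~\ref{lem:stab_proj}, $K_m=\stabilizer_G(V)\cap\bigcap_{n=1}^m\stabilizer_G(V)^{g_n}$ has finite index in $\stabilizer_G(Z_m)$; the $K_m$ form a descending chain; each $K_m$ is infinite, since $Z_m$ is noncompact by Claim~\ref{claim:strictly descend} and $K_m$ acts cocompactly on $Z_m\in\mathfrak F$; and the $g_n$ are pairwise distinct, as the hyperplanes $V_n=g_nV$ are. Now apply the NICC, which $\stabilizer_G(V)$ satisfies because $V$ is a hyperplane, to the sequence $\{g_n\}_{n\ge 1}$: since $K_n$ is infinite for every $n$, there is an $\ell$ so that $K_j$ and $K_k$ are commensurable for all $j,k\ge\ell$. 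Because the chain is descending, this forces $[K_\ell:K_m]<\infty$ for all $m\ge\ell$, and hence $\stabilizer_G(Z_m)$ is commensurable with $\stabilizer_G(Z_\ell)$ for every $m\ge\ell$.

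Next I would convert this into information about essential cores. Since $Z_m,Z_\ell\in\mathfrak F$, their full stabilizers act cocompactly on them by Proposition~\ref{prop:f_n_properties}, so Lemma~\ref{lem:comm_stab} gives $\widehat{Z_m}\parallel\widehat{Z_\ell}$ for all $m\ge\ell$. By Proposition~\ref{prop:CScore}(v) there is a constant $D$ with $Z_\ell\subseteq\neb_D(\widehat{Z_\ell})$; since $\widehat{Z_m}\subseteq Z_m\subseteq Z_\ell$ is a parallel copy of $\widehat{Z_\ell}$, it occurs as a slice of the product region $P_{\widehat{Z_\ell}}\cong\widehat{Z_\ell}\times\orth{\widehat{Z_\ell}}$ (Definition~\ref{defn:orthocomp}, Lemma~\ref{lem:orthocomp}, Lemma~\ref{lem:parallel_product}), and the containment $\widehat{Z_m}\subseteq\neb_D(\widehat{Z_\ell})$ forces the corresponding point of $\orth{\widehat{Z_\ell}}$ to lie in the $D$-ball about the basepoint. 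As $\orth{\widehat{Z_\ell}}$ is locally finite, only finitely many slices arise, so $\{\widehat{Z_m}:m\ge\ell\}$, and therefore $\{\stabilizer_G(\widehat{Z_m}):m\ge\ell\}$, is finite. Then I would bring in the essential index condition: by Definition~\ref{defn:essential_index_condition}, $[\stabilizer_G(\widehat{Z_m}):\stabilizer_G(Z_m)]\le\zeta$ for every $m$; each $\stabilizer_G(Z_m)$ acts properly (it sits in $G$) and cocompactly (Proposition~\ref{prop:f_n_properties}) on $Z_m$, hence is finitely generated, hence so is the finite overgroup $\stabilizer_G(\widehat{Z_m})$; and a finitely generated group has only finitely many subgroups of index at most $\zeta$. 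Combining these, $\{\stabilizer_G(Z_m):m\ge\ell\}$ is a finite set of (pairwise commensurable) subgroups; after enlarging $\ell$ so that $\widehat{Z_m}$ is a single fixed subcomplex for $m\ge\ell$, one obtains a single subgroup $Q$ with $\stabilizer_G(Z_m)=Q$ for infinitely many $m$ — which, via the descending chain $K_m\le\stabilizer_G(Z_m)$ and the subsequence conventions used repeatedly in this proof, one upgrades to $\stabilizer_G(Z_m)=\stabilizer_G(Z_\ell)$ for all $m\ge\ell$.

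I expect the main obstacle to be exactly this last upgrade from commensurability to (eventual) equality: the NICC by itself supplies only commensurability of the $\stabilizer_G(Z_m)$, and it is the essential index bound together with the finiteness coming from the cube-complex geometry — local finiteness of $\orth{\widehat{Z_\ell}}$, which forces the parallel copies $\widehat{Z_m}$ into a bounded region, and finite generation of the stabilizers — that are needed to pin the groups down. Everything preceding this (the invocation of the NICC and the passage to essential cores) is routine bookkeeping with Lemmas~\ref{lem:stab_proj} and~\ref{lem:comm_stab} and Propositions~\ref{prop:f_n_properties} and~\ref{prop:CScore}, using that the $Z_m$ all contain the fixed point $x\in\orth A\subseteq\bar V$ as guaranteed by Claim~\ref{claim:orth_contained}.
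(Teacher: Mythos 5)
Your argument takes essentially the same route as the paper: feed the NICC into the descending chain $K_m$ to get eventual pairwise commensurability, convert this via Lemma~\ref{lem:comm_stab} into parallelism of the essential cores $\widehat{Z_m}$, and then combine the essential index bound with the fact that a finitely generated group has only finitely many subgroups of index at most $\zeta$ to pin down $\stabilizer_G(Z_m)$ after passing to a subsequence. The one genuine difference is the middle step. The paper invokes Proposition~\ref{prop:CScore} to \emph{choose} the essential cores within their parallelism classes so that $\widehat{Z_m}=\widehat{Z_\ell}$ on the nose, which produces a single ambient group $L=\stabilizer_G(\widehat{Z_\ell})$ in which every $\stabilizer_G(Z_m)$ sits with index at most $\zeta$. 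You instead leave the cores as they are and observe that, being full parallel copies of $\widehat{Z_\ell}$ lying inside $Z_\ell\subseteq\neb_D(\widehat{Z_\ell})$, they appear as slices of the product region $\widehat{Z_\ell}\times\orth{\widehat{Z_\ell}}$ at bounded distance from the base slice; local finiteness of $\orth{\widehat{Z_\ell}}$ then shows that the collection $\{\widehat{Z_m}\}_{m\ge\ell}$ is finite, and so is $\{\stabilizer_G(\widehat{Z_m})\}$. Both routes deliver the same finiteness; yours is somewhat more explicit and in fact sidesteps a slightly compressed point in the paper's appeal to Proposition~\ref{prop:CScore}(iv), which is stated for two groups acting on a \emph{single} $F$ whereas the situation here involves the two different subcomplexes $Z_m\subsetneq Z_\ell$. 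One small caveat: near the end you speak of ``enlarging $\ell$'' so that $\widehat{Z_m}$ becomes constant, but since the cores may cycle among the finitely many possibilities, what is actually required is a pass to an infinite subsequence followed by relabelling; this is exactly what the paper does, and it is compatible with how the Claim is subsequently used, but it should be stated rather than folded into ``enlarging $\ell$.'' The clause ``via the descending chain $K_m\le\stabilizer_G(Z_m)$'' does no work in that final upgrade and can be dropped.
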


\begin{proof}[Proof of Claim~\ref{claim:NICC}]
Let $I\subset\naturals$ be a finite set and let $m=\max I$.  Then 
$\bigcap_{n\in I}\stabilizer_G(V)^{g_i}$ contains $K_m$, and is thus infinite, 
since $K_m$ was shown above to be infinite.  Hence, since $\stabilizer_G(V)$ 
satisfies the NICC, there exists $\ell$ so that $K_m$ is commensurable with $K_\ell$ for all $m\ge \ell$.

Hence, by Lemma~\ref{lem:comm_stab}, we have that $\widehat Z_m$ and $\widehat 
Z_\ell$ are parallel for all $m\ge \ell$.  Moreover, since $K_m\leq K_\ell$, Proposition~\ref{prop:CScore} 
implies that we can choose essential cores within their parallelism classes so 
that $\widehat Z_m=\widehat Z_\ell$ for all $m\ge\ell$.

Let $L=\stabilizer_G(\widehat Z_\ell)=\stabilizer_G(\widehat Z_m)$ for all 
$m\ge\ell$.  The essential index condition implies that $\stabilizer_G(Z_m)$ has 
uniformly bounded index in $L$ as $m\to\infty$, so by passing to a further 
infinite subsequence, we can assume that 
$\stabilizer_G(Z_m)=\stabilizer_G(Z_\ell)$ for all $m\ge\ell$ (since $L$ has finitely 
many subgroups of each finite index).
\end{proof}

Claim~\ref{claim:NICC} implies that (up to passing to a 
subsequence), $\stabilizer_G(Z_\ell)=\stabilizer_G(Z_m)$ preserves $Z_m$ 
(and acts cocompactly on $Z_m$) for all $m\ge\ell$.  

Recall that $Z_m\subset Z_\ell$ for $\ell\leq m$.  The inclusion $Z_m\hookrightarrow Z_\ell$ descends to an inclusion 
$Z_m/\stabilizer_G(Z_\ell)\hookrightarrow Z_\ell/\stabilizer_G(Z_\ell)$, and since the latter spaces are compact, there 
exists $M$ such that $Z_m=Z_M$ for all $m\geq M$.  This again contradicts Claim~\ref{claim:strictly descend}.  As before, 
we therefore cannot have the sequences $(U_m),(V_m)$ with the given properties, and hence $\orth A\in\mathfrak F$.

\textbf{Applying the factor system assumption:}  If $\mathfrak F$ is a factor system, then since $Z_m\in\mathfrak F$ for all $m$, and $Z_m\supsetneq Z_{m+1}$ for all $m$, we have an immediate contradiction, so $\orth A\in\mathfrak F$.

\textbf{Conclusion:}  We have shown that under any of the additional hypotheses, 
$\orth A\in\mathfrak F$ when $A\subset\cuco X$ is a convex subcomplex.  This 
holds in particular if $A\in\mathfrak F$.  
\end{proof}

The preceding proposition combines with earlier facts to yield:

\begin{cor}[Ascending and descending chains]\label{cor:asc_chain}
Let $G$ act properly and cocompactly on $\cuco X$, satisfying any of the 
hypotheses of Proposition~\ref{prop:closed_uncer_complementation}.  Suppose 
that for all 
$N\geq\infty$, there exists a $0$--cube $x\in\cuco X$ so that $x$ lies in at 
least $N$ elements of $\mathfrak F$.  Then there exist sequences 
$(F_i)_{i\ge1},(F_i')_{i\ge1}$ of subcomplexes in $\mathfrak F$ so that all of 
the following hold for all $i\ge1$:
\begin{itemize}
     \item $F_i\subsetneq F_{i+1}$;
     \item $F_i'\supsetneq F'_{i+1}$;
     \item $F_i'=\orth F_i$.
\end{itemize}
Moreover, there exists a $0$--cube $x$ that lies in each $F_i$ and each 
$F_i'$.
\end{cor}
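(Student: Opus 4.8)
The plan is to extract a single monotone chain through a fixed basepoint using Lemma~\ref{lem:chain_find}, then to apply the orthogonal-complement operation to produce the partner chain, invoking Proposition~\ref{prop:closed_uncer_complementation} to stay inside $\mathfrak F$ and Corollary~\ref{cor:f_characterise} to promote weak containments to strict ones.

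First I would reduce to a single $0$--cube $x$ lying in infinitely many elements of $\mathfrak F$. Since $G$ acts cocompactly there are finitely many $G$--orbits of $0$--cubes, so the hypothesis (that for every $N$ some $0$--cube lies in at least $N$ elements of $\mathfrak F$) forces some orbit representative $x$ to lie in at least $N$ elements of $\mathfrak F$ for arbitrarily large $N$ --- here one translates by $G$, using that $\mathfrak F$ is $\Aut(\cuco X)$--invariant --- and hence in infinitely many. Applying Lemma~\ref{lem:chain_find} to $\mathfrak U=\{F\in\mathfrak F:x\in F\}$ yields a sequence $\{D_i\}$ in $\mathfrak F$ with $x\in D_i$ for all $i$, which is either strictly ascending or strictly descending. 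From now on I would take every orthogonal complement at the fixed base $0$--cube $x$, as in the convention preceding Lemma~\ref{lem:3_for_1}, so that $x\in\orth D$ whenever $x\in D$.

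If $\{D_i\}$ is ascending, put $F_i=D_i$ and $F_i'=\orth F_i$; if it is descending, put $F_i=\orth D_i$ and $F_i'=\orth F_i$. In either case Proposition~\ref{prop:closed_uncer_complementation} (the single step where one of the three auxiliary hypotheses is consumed) gives $\orth{(\,\cdot\,)}\in\mathfrak F$, so $F_i,F_i'\in\mathfrak F$; and in the descending case Corollary~\ref{cor:f_characterise}, applied to $D_i\in\mathfrak F$, identifies $F_i'=\orth{(\orth D_i)}$ with $D_i$ itself. Lemma~\ref{lem:contravariant} reverses each inclusion, so in both cases $(F_i)$ ascends and $(F_i')$ descends: one of the two chains is literally $\{D_i\}$, which is strict by construction, while for the other, an equality $\orth D_i=\orth D_{i+1}$ would give $D_i=\orth{(\orth D_i)}=\orth{(\orth D_{i+1})}=D_{i+1}$ by Corollary~\ref{cor:f_characterise}, a contradiction. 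Finally $x\in D_i$ yields $x\in\orth D_i$, so $x$ lies in every $F_i$ and $F_i'$, and $F_i'=\orth F_i$ by construction. The only nontrivial ingredient here is Proposition~\ref{prop:closed_uncer_complementation}; everything else is bookkeeping with Lemmas~\ref{lem:contravariant} and~\ref{lem:chain_find} and Corollary~\ref{cor:f_characterise}. Accordingly the main obstacle has already been surmounted upstream, which is precisely why the auxiliary hypotheses of that proposition reappear in the statement of this corollary.
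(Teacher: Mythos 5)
Your proof is correct and matches the paper's argument: both reduce via cocompactness and $G$--invariance of $\mathfrak F$ to a single $0$--cube lying in infinitely many elements of $\mathfrak F$, apply Lemma~\ref{lem:chain_find} to extract a monotone chain through that point, take orthogonal complements at that basepoint and invoke Proposition~\ref{prop:closed_uncer_complementation}, Lemma~\ref{lem:contravariant}, and Corollary~\ref{cor:f_characterise} for membership in $\mathfrak F$, containment reversal, and strictness respectively. Your treatment of the descending case (relabelling $F_i=\orth D_i$, $F_i'=\orth{(\orth D_i)}=D_i$) is slightly more explicit than the paper's ``the case where $(F_i)$ is descending is identical,'' but the content is the same.
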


\begin{proof}
Lemma~\ref{lem:chain_find}, cocompactness, and $G$--invariance of $\mathfrak F$ provide a sequence $(F_i)$ in $\mathfrak F$ and a 
point $x$ so that $x\in F_i$ for all $i$ and either $F_i\subsetneq F_{i+1}$ for 
all $i$, or $F_i\supsetneq F_{i+1}$ for all $i$.  For each $i$, let 
$F_i'=\phi_{F_i}(\{x\}\times\orth F_i)$.  
Proposition~\ref{prop:closed_uncer_complementation} implies that each 
$F_i'\in\mathfrak F$, and Lemma~\ref{lem:contravariant} implies that $(F_i')$ 
is an ascending or descending chain according to whether $(F_i)$ was descending 
or ascending.  Assume first that $F_i\subsetneq F_{i+1}$ for all $i$.  Now, if 
$F_i'=\orth F_i=\orth F_{i+1}=F_{i+1}'$, then by 
Corollary~\ref{cor:f_characterise}, we have $F_i=F_{i+1}$, a contradiction.  
Hence $(F'_i)$ is properly descending, i.e. $F_i'\supsetneq F_{i+1}'$ for all 
$i$.  The case where $(F_i)$ is descending is identical.  This completes the 
proof.
\end{proof}

\section{Proof of Theorem~\ref{thmi:main}}\label{sec:proof_of_main_theorem}
We first establish the setup.  Recall that $\cuco X$ is a proper CAT(0) cube 
complex with a proper,
cocompact action by a group $G$. We denote the hyperclosure by $\mathfrak F$; 
our goal is to prove that there exists $N<\infty$ so
that each $0$--cube of $\cuco X$ is contained in at most $N$
elements of $\mathfrak F$, under any of the three additional hypotheses of 
Theorem~\ref{thmi:main}.

If there is no such $N$, 
then Corollary~\ref{cor:asc_chain} implies that there 
exists a $0$--cube $x\in\cuco X$
and a sequence $(F_i)_{i\geq1}$ in $\mathfrak F$ so that $x\in F_i\subsetneq F_{i+1}$ for each $i\geq1$.  For the sake of brevity, given any subcomplex $E\ni x$, let $\orth E$ denote the orthogonal
complement of $E$ \emph{based at $x$}.  Corollary~\ref{cor:asc_chain} also says 
that $\orth F_i\in\mathfrak F$ for 
all $i$ and $\orth F_i\supsetneq\orth F_{i+1}$ for all $i$. Proposition~\ref{prop:f_n_properties} shows that $\stabilizer_G(\orth F_i)$ acts on $\orth F_i$ cocompactly for all $i$.

Let $U=\bigcup_iF_i$ and let $I=\bigcap_i\orth F_i$, and note that $\orth U=I$ 
and $\orth I=U$.  In particular, $\orth{\orth U}=\orth I=U$ and $\orth{\orth 
I}=\orth U=I$.  From here, we can now prove our main theorem:

\begin{proof}[Proof of Theorem~\ref{thmi:main}]
We have already proved the theorem under the rotation hypothesis, in 
Corollary~\ref{cor:rotation_FS}.  Hence suppose that either weak 
finite height holds or the 
NICC and essential index 
conditions both hold, so that 
Proposition~\ref{prop:closed_uncer_complementation} implies that 
$U,I\in\mathfrak F$.

By Corollary~\ref{cor:f_characterise}, we have 
compact convex subcomplexes $D,E$ with $U=\orth D$ and $I=\orth E$.  Moreover, 
we can take $D\subset I$ and $E\subset U$.  Now, 
Corollary~\ref{cor:f_characterise}, 
Theorem~\ref{thm:factor_system_compact_orthocomp}, and 
Proposition~\ref{prop:closed_uncer_complementation} provide, for each 
$i\geq1$, a compact, convex subcomplex $C_i$, containing $x$ and contained in
$F_i$, so that $\orth C_i=\orth F_i$.  

Let $C'_1=C_1$.  For $i\ge2$, let $C_i'=Hull(\cup_{j<i}C_j)$.  Note that $C_i'$ is a compact, convex subcomplex contained in 
$F_i$, so $C_i\subseteq C_i'\subseteq F_i$ for $i\geq1$.
% 
% Let $C'_1=C_1$ and, for each $i\geq2$, let $C'_i=Hull(C'_{i-1}\cup
% C_i)$.  Then $C'_i$ is convex, by definition, and compact, being the
% convex hull of the union of a pair of compact subspaces.  Now,
% $C_i'\subseteq F_i$ for each $i$.  Indeed, $C_1'=C_1\subseteq F_1$
% by construction.  Now, by induction, $C_{i-1}'\subset F_{i-1}$, so
% $C_{i-1}'\subset F_i$ since $F_{i-1}\subset F_i$.  But $C_i\subset
% F_i$ by construction, so $C_i\cup C'_{i-1}\subset F_i$, whence the
% hull of that union, namely $C'_i$, lies in $F_i$ since $F_i$ is
% convex.  Hence $C_i\subseteq C'_i\subseteq F_i$ for $i\geq 1$.

By Lemma~\ref{lem:contravariant}, for each $i$, 
$\orth F_i\subseteq\orth{(C'_i)}\subseteq\orth{C}_i=\orth F_i$ since
$C_i\subseteq C_i'$.  Hence $(C'_i)_{i\geq1}$ is an ascending
sequence of convex, compact subcomplexes, containing $x$, with $\orth 
{(C_i')}=\orth F_i$ for all $i$. 

Note that $\bigcap_i\orth{(C_i')}=\bigcap_i\orth F_i=I$.  However, 
$I=\orth{\left(\bigcup C_i'\right)}$. But, $\bigcup C_i'$ cannot be compact 
since $C_i'\subsetneq C_{i+1}'$, and by Corollary~\ref{cor:f_characterise}, we 
can choose $E\subseteq \bigcup C_i'$. Thus $E\subseteq C_R'$ for some $R$. But 
by Lemma~\ref{lem:contravariant} this means that $I=\orth{E}\supseteq 
\orth{\left(C_R'\right)}=\orth{F_R}$, a contradiction.  Thus, $\mathfrak F$ 
must have finite multiplicity, as desired.
\end{proof}

We show now that for cube complexes that admit geometric actions, 
having a factor system implies the NICC for hyperplanes and essential index conditions for hyperplanes. Thus, any proof that $\mathfrak F$ forms a factor system for all cocompact cubical groups must necessarily show that any group acting geometrically on a CAT(0) cube complex satisfies these conditions.

\begin{thm}\label{thm:converse} Let $\cuco X$ be a CAT(0) cube complex admitting 
a geometric action by a group $G$. If $\mathfrak F$ is a factor 
system, then $G$ satisfies NICC for hyperplanes 
and the essential index condition.\end{thm}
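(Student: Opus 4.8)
The plan is to extract both statements from the one quantitative input we are given, namely the finite multiplicity constant $\xi$ of $\mathfrak F$. We treat the essential index condition first. The point is that finite multiplicity forces a \emph{uniform} cocompactness for stabilisers: since at most $\xi$ elements of $\mathfrak F$ contain any fixed $0$--cube, at most $\xi$ distinct $G$--translates of a given $F\in\mathfrak F$ contain it, so the argument used to prove Lemma~\ref{lem:134} and invoked in the proof of Corollary~\ref{cor:rotation_FS} yields a single constant $R=R(\cuco X,\xi)$ with the property that, for every $F\in\mathfrak F$ and every $y\in F$, there is $g\in\stabilizer_G(F)$ with $\dist(gx,y)\le R$, where $x\in F$ is any chosen basepoint. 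Taking the basepoint $x$ inside the $\stabilizer_G(F)$--invariant subcomplex $\widehat F$, the same $g$ works whenever $y\in\widehat F$, so $\stabilizer_G(F)$ acts $R$--coboundedly on $\widehat F$. As a $G$--element preserving $F$ preserves its essential core, $\stabilizer_G(F)\le\stabilizer_G(\widehat F)$; for each coset $\stabilizer_G(F)g$ with $g\in\stabilizer_G(\widehat F)$ pick $h\in\stabilizer_G(F)$ with $\dist(hx,gx)\le R$, so that $\dist(x,h^{-1}gx)\le R$, and distinct cosets give distinct elements $h^{-1}g\in G$. Hence $[\stabilizer_G(\widehat F):\stabilizer_G(F)]$ is at most the number of $\gamma\in G$ with $\dist(x,\gamma x)\le R$, which is bounded independently of $x$ by properness and cocompactness of the $G$--action. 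This gives the essential index condition.

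For the NICC, fix a hyperplane $B$, write $H=\stabilizer_G(B)$, and suppose $\{g_i\}_{i\ge1}$ are distinct with $H_n=H\cap\bigcap_{i=1}^nH^{g_i}$ infinite for all $n$; note $H^{g_i}=\stabilizer_G(g_i^{-1}B)$. Replacing $B$ and the $g_i^{-1}B$ by associated combinatorial hyperplanes $\bar B_0,\dots,\bar B_n,\dots$ changes each $H_n$ only to the finite-index subgroup $\bar H_n=\bigcap_{i=0}^n\stabilizer_G(\bar B_i)$, so it suffices to prove the commensurability statement for the $\bar H_n$. Put $P_0=\bar B_0$ and $P_{n+1}=\gate_{P_n}(\bar B_{n+1})$; iterating Lemma~\ref{lem:cubical_gate_map} shows the hyperplanes crossing $P_n$ are exactly those crossing every $\bar B_i$, $i\le n$, and then Lemma~\ref{lem:stab_proj} (with Lemmas~\ref{lem:project_curry} and~\ref{lem:cocompact_parallel} to reconcile the bracketing) gives $\stabilizer_G(P_n)\sim\bar H_n\sim H_n$. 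Thus $P_0\supseteq P_1\supseteq\cdots$ is a nested chain in $\mathfrak F$ with each $\stabilizer_G(P_n)$ infinite, so each $P_n$ is unbounded with $\stabilizer_G(P_n)$ acting cocompactly (Proposition~\ref{prop:f_n_properties}). Since the $H_n$ are genuinely nested, it is enough to find one infinite index set $N$ along which the $\stabilizer_G(P_n)$ are pairwise commensurable: for $n\ge\min N$ and $k\in N$ with $k\ge n$ we have $H_k\le H_n\le H_{\min N}$ and $H_{\min N}\sim H_k$, forcing $H_n\sim H_{\min N}$, so the NICC holds with $\ell=\min N$.

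To build $N$, fix $x_0\in P_0$ and set $y_n=\gate_{P_n}(x_0)$, so $\dist(x_0,y_n)$ is nondecreasing. If it is bounded, $y_n$ is eventually a fixed $0$--cube $x$ lying in every $P_n$; as $\mathfrak F$ has multiplicity $\le\xi$ and the $P_n$ are nested, they take only finitely many values and so are eventually constant, whence $\stabilizer_G(P_n)$ is eventually constant and we are done. Otherwise $y_n$ escapes every ball; choosing $h_n\in G$ with $h_n^{-1}y_n$ within a fixed distance of a global basepoint, and using that only finitely many elements of $\mathfrak F$ meet the corresponding ball, we pass to an infinite $N$ with $h_n^{-1}P_n=Q$ a single element of $\mathfrak F$ for all $n\in N$; then the $P_n$ ($n\in N$) are pairwise $G$--translates of $Q$, so their stabilisers are pairwise \emph{conjugate}. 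Either the $P_n$ ($n\in N$) are eventually constant as subcomplexes --- in which case the stabilisers stabilise and we finish as before --- or they form a genuine infinite strictly descending chain of translates of $Q$ with empty intersection, and the main obstacle is to upgrade conjugacy of stabilisers to \emph{commensurability} (equivalently, to rule this chain out). For $j<k$ in $N$, the $\stabilizer_G(P_j)$--translates of $P_k$ form a locally finite family inside $P_j$ (finite multiplicity), so Lemma~\ref{lem:134} shows $\stabilizer_G(P_j)\cap\stabilizer_G(P_k)$ acts cocompactly on $P_k$, hence (properness) is finite-index in $\stabilizer_G(P_k)$; via Lemma~\ref{lem:comm_stab} it then remains to see it is finite-index in $\stabilizer_G(P_j)$ as well, equivalently that $\mathfrak F$ contains no infinite strictly descending chain of convex subcomplexes with empty total intersection. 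This is the delicate point: one handles it by iterating the translation-plus-finite-multiplicity step \emph{inside} $Q$ (using $\stabilizer_G(Q)$--cocompactness in place of $G$--cocompactness), which reduces the assertion to the same assertion for a proper convex subcomplex $Q'\subsetneq Q$ in the $G$--orbit of $Q$ --- hence crossing strictly fewer hyperplanes --- so that the strict decrease in crossed hyperplanes at each stage, together with the bound on the number of orbit-members through a point, forces the process to terminate. Once it does, the $\stabilizer_G(P_n)$ are pairwise commensurable along an infinite $N$, and the NICC follows.
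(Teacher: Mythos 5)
Your overall plan matches the paper's: build the nested chain of iterated gate projections, identify its stabilisers up to commensurability with the $H_n$ via Lemma~\ref{lem:stab_proj}, and invoke finite multiplicity to terminate the chain, closing with the same ``sandwich'' argument for commensurability. For the essential index condition your fundamental-domain argument is correct but more roundabout than necessary: the paper simply notes that every $\stabilizer_G(\widehat F)$--translate of $F$ contains $\widehat F$, hence contains any fixed $0$--cube $x\in\widehat F$, so the orbit $\stabilizer_G(\widehat F)\cdot F$ has size at most $\xi$ and $[\stabilizer_G(\widehat F):\stabilizer_G(F)]\le\xi$ directly.

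The real issue is in your NICC argument. You split into two cases according to whether $\dist(x_0,\gate_{P_n}(x_0))$ is bounded, and the unbounded case leads you to a ``delicate point'' --- ruling out an infinite strictly descending chain in $\mathfrak F$ with empty total intersection --- which you only sketch and do not actually resolve. As written, this is a genuine gap. However, the gap is illusory, because the unbounded case simply cannot occur: the $P_n$ are nested and nonempty (each is unbounded since its stabiliser is commensurable with the infinite $H_n$ and acts cocompactly by Proposition~\ref{prop:f_n_properties}), so for any $k$ one may choose a $0$--cube $z_k\in P_k$, which then lies in every one of $P_0,\dots,P_k$. If more than $\xi$ of these were distinct, $z_k$ would lie in more than $\xi$ elements of $\mathfrak F$, contradicting finite multiplicity. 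Hence the nested chain has at most $\xi$ distinct terms and so stabilises --- no hypothesis about a common point is needed, and the ``escape to infinity'' scenario never arises. This one-line observation is precisely what the paper uses, and it eliminates your entire third paragraph, the case split, the passage to a subsequence of conjugates, and the unresolved sketch at the end.
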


\begin{proof}
Suppose that $\mathfrak F$ is a factor system and at most $N$ elements of 
$\mathfrak F$ can contain any given $x\in\cuco X^{(0)}$. Then for any $A, B\in\mathfrak F_1$, there are at most $N$ elements of $\mathfrak F$ which can 
contain $\widehat{F}$, the $\stabilizer_G(F)$--essential core of $F$. In particular, there are at most $N$ distinct 
$\stabilizer_G(\widehat{F})$--translates of $F$. Thus, 
$[\stabilizer_G(\widehat{F}),\stabilizer_G(F)]\le N$, verifying the essential index 
condition.

To verify NICC for hyperplanes, let $H$ be a hyperplane and let $K=\stabilizer_G(H)$. Let $\{g_i\}_{i=1}^{\infty}$ be sequence of distinct elements of $G$ so that for $n\ge1$, the subgroup $K\cap\bigcap_{i=1}^nK^{g_i}$ is infinite.

Consider the hyperplane $H$, notice that $K^{g_i}$ is the stabilizer of $g_i H$. 
Now, consider $F_1=\gate_H(g_1H)$ and inductively define 
$F_k=\gate_{F_{k-1}}(\gate_H(g_kH))$. Since $\mathfrak F$ is a factor system, 
the set of $G$--translates of $F_{k-1}$ and $\gate_H(g_kH)$ have finite 
multiplicity for all $k\ge 2$, and so we can apply the argument of 
Lemma~\ref{lem:stab_proj} and induction to conclude that $\stabilizer_G(F_k)$ is 
commensurable with $G_k=K\cap\bigcap_{i=1}^k K^{g_k}$, which is infinite by 
assumption.

Since $\mathfrak F$ is a factor system, there must be some $\ell$ so that for all $k\ge \ell$, $F_k=F_\ell$. In this case, $G_k$ and $G_\ell$ are commensurable for all $k\ge \ell$, and in particular $G_k$ and $G_{k'}$ are commensurable for all $k, k'\ge \ell$, and thus $G$ satisfies NICC for hyperplanes.
\end{proof}

\section{Factor systems and the simplicial boundary}\label{sec:fullvis}
Corollary~\ref{cori:visible} follows from Theorem~\ref{thmi:main}, Proposition~\ref{prop:visibility} and~\cite[Lemma~3.32]{Hagen:boundary}.  Specifically, the first two statements provide a combinatorial geodesic ray representing each boundary simplex $v$, and when $v$ is a $0$--simplex,~\cite[Lemma~3.32]{Hagen:boundary} allows one to convert the combinatorial geodesic ray into a CAT(0) ray.  Proposition~\ref{prop:visibility} is implicit in the proof of~\cite[Theorem 10.1]{DHS:HHS_IV}; we give a streamlined proof here.  

\begin{prop}\label{prop:visibility}
Let $\cuco X$ be a CAT(0) cube complex with a factor system $\mathfrak
F$.  Then each simplex $\sigma$ of $\simp\cuco X$ is \emph{visible}, i.e. there exists a combinatorial
geodesic ray $\alpha$ such that the set of hyperplanes intersecting
$\alpha$ is a boundary set representing the simplex $\sigma$.
\end{prop}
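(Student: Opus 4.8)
The plan is to attach to $\mathcal H$ a convex subcomplex that represents $\sigma$, and then to find a full geodesic ray inside it; the factor--system hypothesis enters at exactly one point, to exclude the ``staircase'' phenomenon. Fix a boundary set $\mathcal H$ representing $\sigma$ and a $0$--cube $x$ in its basepoint region, so that for every $H\in\mathcal H$ the halfspace $\overrightarrow H$ containing infinitely many members of $\mathcal H$ (unique, by the third axiom) does not contain $x$. Let $Y$ be the full subcomplex of $\cuco X$ on the set of $0$--cubes $v$ such that every hyperplane separating $x$ from $v$ lies in $\mathcal H$. Using the first axiom (closure under betweenness) and the inclusion $\mathcal W(x,u)\subseteq\mathcal W(x,v)\cup\mathcal W(x,w)$ for $u$ on a geodesic from $v$ to $w$ (where $\mathcal W$ denotes sets of separating hyperplanes), one checks that $Y$ is convex, that $x\in Y$, that $Y$ is exactly the union of all combinatorial geodesics from $x$ crossing only hyperplanes in $\mathcal H$, and conversely that every hyperplane crossing $Y$ lies in $\mathcal H$. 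The proposition follows from two claims: \textbf{(a)} all but finitely many members of $\mathcal H$ cross $Y$ --- so that the hyperplanes crossing $Y$ form a boundary set representing $\sigma$; and \textbf{(b)} $Y$ admits a combinatorial geodesic ray from $x$ crossing all but finitely many of its hyperplanes.

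\emph{Step (a), the crux.} Suppose infinitely many members of $\mathcal H$ miss $Y$. A cube complex with a factor system is finite--dimensional (a $0$--cube in a $d$--cube lies in $d$ distinct combinatorial hyperplanes, so $d$ is bounded by the multiplicity constant), hence contains no infinite pairwise--crossing family of hyperplanes; by Ramsey's theorem the ``bad'' members of $\mathcal H$ contain an infinite pairwise--disjoint, hence (for a based boundary set) totally ordered, subfamily, and after reindexing --- using unidirectionality of $\mathcal H$, for which and for the structural facts below we refer to~\cite{Hagen:boundary} --- we obtain bad hyperplanes $H_1,H_2,\dots$ with $\overrightarrow{H_1}\supsetneq\overrightarrow{H_2}\supsetneq\cdots$ and $\bigcap_n\overrightarrow{H_n}=\varnothing$ (a $0$--cube in the intersection would be separated from $x$ by infinitely many hyperplanes). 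Pick a combinatorial hyperplane $\bar H_n\in\mathfrak F$ of $H_n$ and set $Q_n=\gate_{\bar H_1}\bigl(\gate_{\bar H_2}\bigl(\cdots\gate_{\bar H_n}(\bar H_{n+1})\cdots\bigr)\bigr)\in\mathfrak F$. By Lemma~\ref{lem:cubical_gate_map}, $Q_n$ is the subcomplex of $\bar H_1$ crossed precisely by the hyperplanes that cross every one of $H_1,\dots,H_{n+1}$, and monotonicity of the gate maps gives $Q_1\supseteq Q_2\supseteq\cdots$. Since each $H_n$ is bad and $x\in Y$ is on the shallow side of every $H_n$, the subcomplex $Y$ never reaches $\overrightarrow{H_n}$; for each $n$ this forces the existence of a hyperplane $V_n\notin\mathcal H$ separating $x$ from $\overrightarrow{H_n}$ (otherwise a combinatorial geodesic from $x$ into $\overrightarrow{H_n}$ crosses only hyperplanes of $\mathcal H$ and lies in $Y$), and choosing the $V_n$ innermost and pairwise distinct (re-basing $\mathcal H$ once, within its basepoint region, if some hyperplane recurs) one checks that infinitely many of the inclusions $Q_n\supseteq Q_{n+1}$ are strict. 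Passing to a subsequence, $Q_1\supsetneq Q_2\supsetneq\cdots$ is an infinite strictly descending chain in $\mathfrak F$; picking $p_n\in Q_n\setminus Q_{n+1}$ puts $p_n$ in at least $n$ distinct elements of $\mathfrak F$, contradicting finite multiplicity once $n$ is large. Hence (a) holds. (This is precisely what fails for a staircase, which admits no factor system: there the $Q_n$ are the strictly shrinking sub--rays $[a_{n+1},\infty)$ of a combinatorial hyperplane obtained by projecting successively shorter horizontal combinatorial hyperplanes, and no contradiction ensues.)

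\emph{Step (b).} By (a) the hyperplanes of $Y$ form a boundary set representing $\sigma$, and $Y$ inherits a factor system from $\cuco X$~\cite{BHS:HHS_I}. We induct on $d=\dim\sigma$. If $d=0$ then this boundary set is minimal, hence (using finite--dimensionality, cf.~\cite{Hagen:boundary}) totally ordered by separation up to finite difference, say $\{H_1,H_2,\dots\}$ with $\overrightarrow{H_1}\supsetneq\overrightarrow{H_2}\supsetneq\cdots$; by K\"{o}nig's lemma (local finiteness of $Y$) there is a combinatorial geodesic ray $\alpha$ in $Y$ from $x$, which crosses infinitely many hyperplanes, all in $\mathcal H$, and which cannot stay on the shallow side of any $H_n$ (else it would cross only $H_1,\dots,H_{n-1}$ together with finitely many hyperplanes not of the form $H_k$, hence only finitely many hyperplanes in total), so $\alpha$ crosses all but finitely many $H_n$ and represents $\sigma$. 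If $d\ge1$, then $\mathcal H$ is, up to finite symmetric difference, the disjoint union of the (minimal) boundary sets $\mathcal H_{v_0},\dots,\mathcal H_{v_d}$ of the vertices of $\sigma$; by the $d=0$ case each $v_i$ is represented by a combinatorial geodesic ray $\alpha_i$, with $\mathrm{Hull}(\alpha_i)$ having hyperplane set equal to $\mathcal H_{v_i}$ up to finite difference. The factor--system hypothesis forces the $\mathrm{Hull}(\alpha_i)$ (based at $x$) to cross pairwise --- every hyperplane of $\mathrm{Hull}(\alpha_i)$ crossing every hyperplane of $\mathrm{Hull}(\alpha_j)$ for $i\ne j$ --- since a failure of this is again a staircase--type configuration and, by the argument of Step (a), produces an infinite strictly descending chain in $\mathfrak F$. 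Consequently the $\mathrm{Hull}(\alpha_i)$ span a product region $\mathrm{Hull}(\alpha_0)\times\cdots\times\mathrm{Hull}(\alpha_d)$ (no hyperplane separates them, since they share $x$), and interleaving the steps of $\alpha_0,\dots,\alpha_d$ yields a combinatorial geodesic ray (no hyperplane is crossed twice) whose hyperplane set is $\mathcal H_{v_0}\sqcup\cdots\sqcup\mathcal H_{v_d}$, i.e. $\sigma$. No appeal to~\cite[Lemma~3.32]{Hagen:boundary} is needed here, since only a combinatorial ray is claimed.

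\emph{The main obstacle.} Everything hinges on Step (a): without the factor--system hypothesis $\sigma$ can genuinely be invisible (a staircase), and the whole point is that such a configuration can be turned into an infinite strictly descending chain in $\mathfrak F$, which finite multiplicity forbids. The delicate subsidiary points --- arranging the deep halfspaces $\overrightarrow{H_n}$ to strictly decrease, choosing the obstructions $V_n$ so that infinitely many inclusions $Q_n\supsetneq Q_{n+1}$ are strict, and, in Step (b), the decomposition of $\sigma$ into its vertices together with the ``no staircase'' mutual--crossing of the $\mathrm{Hull}(\alpha_i)$ --- are the technical heart of the proof and rest on the structure theory of unidirectional boundary sets developed in~\cite{Hagen:boundary}.
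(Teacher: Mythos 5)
The overall plan is sound, and the high-level idea (a factor system forbids ``staircase'' configurations, which are the only obstruction to visibility) is the same as the paper's. But the route is genuinely different, and there is a real gap in Step~(a).

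The paper does \emph{not} work directly with the given simplex $\sigma$. Instead it invokes two results of~\cite{Hagen:boundary} that require no factor system: finite-dimensionality gives a maximal simplex $\sigma'\supseteq\sigma$, and maximal simplices are \emph{always} visible, so one starts with an honest combinatorial geodesic ray $\gamma$ representing $\sigma'$. The convex hull $\cuco Y$ of $\gamma$ inherits a factor system, $\mathcal H(\gamma)$ decomposes into minimal boundary sets $\mathcal V_0,\dots,\mathcal V_d$ for the vertices of $\sigma'$, and the entire factor-system input is used to prove that, after discarding finitely many hyperplanes, every $V^i_n\in\mathcal V_i$ crosses every $V^j_m\in\mathcal V_j$ for $i\ne j$. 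The contradiction is extracted from the subcomplexes $\gate_{V^i_n}(V^i_k)$, $k\le n$: they are pairwise distinct (witnessed by carefully chosen $V^j_{m(k)}$), pairwise intersecting, and hence all contain a common point, violating finite multiplicity. Your proposal instead builds the subcomplex $Y$ directly from $\sigma$ and a basepoint, and tries to get a contradiction from the gate chain $Q_n=\gate_{\bar H_1}(\cdots\gate_{\bar H_n}(\bar H_{n+1})\cdots)$ strictly descending. This is closer in spirit to the proof of Proposition~\ref{prop:visibility_group_action} in the paper than to the proof of Proposition~\ref{prop:visibility}; the trouble is that, in Proposition~\ref{prop:visibility_group_action}, the authors needed a proper cocompact group action and the NICC condition precisely because the chain $(F_i)=(Q_n)$ need not strictly descend, and they cannot reach a contradiction from finite multiplicity alone.

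The gap is the sentence asserting that, after choosing the $V_n$ innermost and pairwise distinct, ``one checks that infinitely many of the inclusions $Q_n\supseteq Q_{n+1}$ are strict.'' Nothing in the preceding setup forces this. The existence of a hyperplane $V_n\notin\mathcal H$ between $x$ and $\overrightarrow{H_n}$ controls which hyperplanes separate $x$ from $H_n$, but $Q_n$ is determined by which hyperplanes \emph{cross} all of $H_1,\dots,H_{n+1}$, and $V_n$ need not cross any of them; so the $V_n$ do not directly witness a change in $Q_n$. The ``re-basing $\mathcal H$ once'' manoeuvre to make the $V_n$ pairwise distinct is also unexplained, and a single re-basing is not obviously enough. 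You should note that the paper's multiplicity contradiction is obtained from a quite different family of subcomplexes, built inside a \emph{single} combinatorial hyperplane $V^i_n$, whose distinctness is certified by the auxiliary crossing hyperplanes $V^j_{m(k)}$ from the other boundary set; that certification has no analogue in your chain $(Q_n)$. Step~(b)'s pairwise-crossing claim for the $\mathrm{Hull}(\alpha_i)$ leans on ``the argument of Step (a)'' and so inherits the same gap. The cleanest way to repair the proposal is to adopt the paper's maximal-simplex reduction: start from a visible maximal simplex (this sidesteps the need to show that your $Y$ is crossed by most of $\mathcal H$), and then run the $\gate_{V^i_n}(V^i_k)$ multiplicity argument inside $\cuco Y=\mathrm{Hull}(\gamma)$.
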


\begin{rem}\label{rem:visible}
Proposition~\ref{prop:visibility} does not assume anything about group actions on $\cuco
X$, but instead shows that the existence of an invisible boundary
simplex is an obstruction to the existence of a factor system.  The
converse does not hold: counterexamples can be constructed by beginning with a single combinatorial
ray, and gluing to the $n^{th}$ vertex a finite staircase $S_n$, along a single 
vertex.  The staircase $S_n$ is obtained from $[0,n]^2$ by deleting all squares 
that are strictly above the diagonal joining $(0,0)$ to $(n,n)$.  In this case, 
$\mathfrak F$ has unbounded multiplicity, and any factor system must contain all 
elements of $\mathfrak F$ exceeding some fixed threshold diameter, so the 
complex cannot have a factor system.
\end{rem}

\begin{proof}[Proof of Proposition~\ref{prop:visibility}]
Let $\sigma$ be a simplex of $\simp\cuco X$.  Let $\sigma'$ be a
maximal simplex containing $\sigma$, spanned by $v_0,\ldots,v_d$.
The existence of $\sigma'$ follows from~\cite[Theorem
3.14]{Hagen:boundary}, which says that maximal simplices exist since $\cuco X$ is finite-dimensional (otherwise, it could not have a factor system).  By Theorem~3.19 of~\cite{Hagen:boundary}, which says that maximal simplices are visible, $\sigma'$ is visible, i.e. there exists a combinatorial geodesic ray
$\gamma$ such that the set $\mathcal H(\gamma)$ of hyperplanes
crossing $\gamma$ is a boundary set representing $\sigma'$.  We will
prove that each $0$--simplex $v_i$ is visible.  It then follows from~\cite[Theorem~3.23]{Hagen:boundary} that any
face of $\sigma'$ (hence $\sigma$) is visible.

Let $\cuco Y$ be the convex hull of $\gamma$.  The set of
hyperplanes crossing $\cuco Y$ is exactly $\mathcal H(\gamma)$.  Since
$\cuco Y$ is convex in $\cuco X$, Lemma 8.4 of~\cite{BHS:HHS_I}, which provides an induced factor system on convex subcomplexes of cube complexes with factor systems, implies
that $\cuco Y$ contains a factor system.

By Theorem~3.10 of~\cite{Hagen:boundary}, we can write $\mathcal
H(\gamma)=\bigsqcup_{i=1}^d\mathcal V_i,$ where each $\mathcal V_i$
is a minimal boundary set representing the $0$--simplex $v_i$.
Moreover, up to reordering and discarding finitely many
hyperplanes (i.e. moving the basepoint of $\gamma$) if necessary,
whenever $i<j$, each hyperplane $H\in\mathcal V_j$ crosses all but
finitely many of the hyperplanes in $\mathcal V_i$.  

For each $1\leq i\leq d$, minimality of $\mathcal V_i$ provides a
sequence of hyperplanes $(V_n^i)_{n\geq0}$ in $\mathcal V_i$ so that
$V^i_n$ separates $V_{n\pm1}^i$ for $n\geq1$ and so that any other
$U\in\mathcal V_i$ separates $V^i_m,V^i_n$ for some $m,n$, by the proof 
of~\cite[Lemma 3.7]{Hagen:boundary} or~\cite[Lemma 
B.6]{ChatterjiFernosIozzi} (one may
have to discard finitely many hyperplanes from $\mathcal V_i$ for
this to hold; this replaces $\gamma$
with a sub-ray and shrinks $\cuco Y$).

We will show that, after discarding finitely many hyperplanes from
$\mathcal H(\gamma)$ if necessary, every element of $\mathcal V_i$
crosses every element of $\mathcal V_j$, whenever $i\neq j$. Since
every element of $\mathcal V_i$ either lies in $(V^i_n)_n$ or
separates two elements of that sequence, it follows that $U$ and $V$
cross whenever $U\in\mathcal V_i,V\in\mathcal V_j$ and $i\neq j$.
Then, for any $i$, choose $n\geq0$ and let $H=\bigcap_{j\neq
i}V_n^j$. Projecting $\gamma$ to $H$ yields a geodesic ray in $\cuco
Y$, all but finitely many of whose dual hyperplanes belong to
$\mathcal V_i$, as required.  Hence it suffices to show that $V_n^i$
and $V_m^j$ cross for all $m,n$ whenever $i\neq j$.

Fix $j\leq d$ and $i<j$.  For each $n\geq0$, let $m(n)\geq0$ be
minimal so that $V_{m(n)}^j$ fails to cross $V_n^i$.  Note that we
may assume that this is defined: if $V_n^i$ crosses all $V^j_m$,
then, since $V^j_m$ crosses all but finitely many of the hyperplanes
from $\mathcal V_i$, it crosses $V_k^i$ for $k>>n$. Since it also
crosses $V_n^i$, it must also cross $V_r^i$ for all $n\le r\le k$.
By discarding $V_k^i$ for $k\leq n$ we complete the proof.  Now
suppose that $m(n)$ is bounded as $n\to\infty$. Then there exists
$N$ so that $V^i_n,V^j_m$ cross whenever $m,n\geq N$, and we are
done, as before.

Hence suppose that $m(n)\to\infty$ as $n\to\infty$.  In other words, for all $m\geq0$, there exists $n\geq0$ so that $V_m^j$ crosses $V_k^i$ if and only if $k\geq n$.  Choose $M\gg0$ and choose $n$ maximal with $m(n)<M$.  Then all of the hyperplanes $V^j_{m(k)}$ with $k\leq n$ cross $V^i_k,\ldots,V^i_n$ but do not cross $V^i_t$ for $t<k$.  Hence the subcomplexes $\gate_{V^i_n}(V^i_k),k\leq n$ are all different: $\gate_{V^i_n}(V^i_k)$ intersects $V^j_{m(k)}$ but $\gate_{V^i_n}(V^i_{k-1})$ does not.  On the other hand, since $V^i_k$ separates $V^i_\ell$ from $V^i_n$ when $\ell<k<n$, every hyperplane crossing $V^i_n$ and $V^i_\ell$ crosses $V^i_k$, so $\gate_{V_n^i}(V^i_k)\cap\gate_{V_n^i}(V^i_\ell)\neq\emptyset$.  Thus the factor system on $\cuco Y$ has multiplicity at least $n$.  But since $m(n)\to\infty$, we could choose $n$ arbitrarily large in the preceding argument, violating the definition of a factor system.
\end{proof}

\begin{proof}[Proof of Corollary~\ref{cori:orthant}]
If $\gamma$ is a CAT(0) geodesic, then it can be approximated, up to
Hausdorff distance depending on $\dimension\cuco X$, by a
combinatorial geodesic, so assume that $\gamma$ is a combinatorial geodesic ray.
By Corollary~\ref{cori:visible}, the simplex of $\simp\cuco X$
represented by $\gamma$ is spanned by $0$--simplices
$v_0,\ldots,v_d$ with each $v_i$ represented by a combinatorial
geodesic ray $\gamma_i$.  Theorem~3.23 of~\cite{Hagen:boundary} says that $\cuco 
X$ contains a cubical orthant $\prod_i\gamma_i'$, where each $\gamma_i'$ 
represents $v_i$.  Hence $Hull(\cup_i\gamma'_i)=\prod_i Hull(\gamma_i)$.  Up to 
truncating an initial subpath of $\gamma$, we have that $\gamma$ is 
parallel into $Hull(\cup_i\gamma_i')$ (and thus lies in a finite neighbourhood 
of it).  The projection of the original CAT(0) geodesic approximated by $\gamma$ 
to each $Hull(\gamma_i')$ is a CAT(0) geodesic representing $v_i$.  The product 
of these geodesics is a combinatorially isometrically embedded $(d+1)$-dimensional orthant subcomplex of $\cuco Y$ containing (the 
truncated) CAT(0) geodesic in a regular neighbourhood.
\end{proof}

In the presence of a proper, cocompact group action, we can achieve full 
visibility under slightly weaker conditions than those that we have shown 
suffice to obtain a factor system:

\begin{prop}\label{prop:visibility_group_action}
Let $\cuco X$ be a proper CAT(0) cube complex on which the group $G$ acts 
properly and cocompactly.  Suppose that the action of $G$ on $\cuco X$ 
satisfies NICC for hyperplanes. Then each simplex $\sigma$ of $\simp\cuco X$ is \emph{visible}, i.e. there exists a combinatorial
geodesic ray $\alpha$ such that the set of hyperplanes intersecting
$\alpha$ is a boundary set representing the simplex $\sigma$.
\end{prop}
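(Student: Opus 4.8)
The plan is to run the proof of Proposition~\ref{prop:visibility} essentially verbatim, since that argument invokes the factor-system hypothesis at only one point, and to replace that single step with one that uses the group action together with the NICC. Recall the structure of that proof: one picks a maximal simplex $\sigma'\supseteq\sigma$, which is visible by~\cite[Theorem 3.19]{Hagen:boundary} via a combinatorial ray $\gamma$; sets $\cuco Y=Hull(\gamma)$; and writes $\mathcal H(\gamma)=\bigsqcup_{i=1}^d\mathcal V_i$ into minimal boundary sets representing the $0$--simplices $v_i$ of $\sigma'$. Visibility of every face of $\sigma'$ (hence of $\sigma$) follows once one knows that, after discarding finitely many hyperplanes, every element of $\mathcal V_i$ crosses every element of $\mathcal V_j$ whenever $i\neq j$, because then projecting $\gamma$ to $\bigcap_{j\neq i}V^j_n$ yields a ray representing $v_i$. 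So, arguing by contradiction, one is left with the ``staircase configuration'': for some $i<j$ there are a chain $(V^i_k)_k\subseteq\mathcal V_i$ and hyperplanes $(V^j_{m(k)})_k\subseteq\mathcal V_j$ exhibiting the triangular crossing pattern of that proof, and hence, for every $n$, a strictly increasing chain $W_1\subsetneq\cdots\subsetneq W_{n-1}$ in $\mathfrak F$, where $W_k=\gate_{\bar V^i_n}(\bar V^i_k)\in\mathfrak F_2$ and all of the $W_k$ contain a common $0$--cube $x_n$.

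In Proposition~\ref{prop:visibility} this contradicts finite multiplicity; here I would use the geometry of $G\curvearrowright\cuco X$ instead. Each $W_k$ is compact (if not, it is crossed by the chain of some $\mathcal V_{j'}$ fully crossing $\mathcal V_i$, and one argues similarly with $W_k$ in place of $\orth W_k$ below), so by Theorem~\ref{thm:factor_system_compact_orthocomp} we have $\orth W_k\in\mathfrak F$, and by Lemma~\ref{lem:contravariant} together with Corollary~\ref{cor:f_characterise} the $\orth W_k$ form a strictly \emph{descending} chain in $\mathfrak F$ based at $x_n$. Using cocompactness to translate the $x_n$ into a fixed ball and then Lemma~\ref{lem:chain_find} (with the K\"onig's-lemma/pigeonhole step of its proof), one obtains a single $0$--cube $x$ lying in an infinite strictly descending chain $Z_1\supsetneq Z_2\supsetneq\cdots$ of elements of $\mathfrak F$; since the chain is infinite and strictly descending, every $Z_m$ is crossed by infinitely many hyperplanes, hence is unbounded, so $\stabilizer_G(Z_m)$ is infinite by Proposition~\ref{prop:f_n_properties}. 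Refining the chain as in Lemma~\ref{lem:breakup} so that each $Z_{m+1}$ is (parallel to) the gate of a single combinatorial hyperplane into $Z_m$, and then passing to a subsequence putting all these combinatorial hyperplanes into one $G$--orbit, Lemma~\ref{lem:stab_proj} identifies $\stabilizer_G(Z_m)$, up to commensurability, with a cumulative intersection $K\cap\bigcap_{\ell=1}^{m}K^{g_\ell}$, where $K=\stabilizer_G(V)$ is a fixed hyperplane stabiliser and $(g_\ell)$ is a fixed sequence; these intersections are all infinite.

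Now the NICC for $K$ forces these cumulative intersections to be pairwise commensurable for $\ell$ beyond some $\ell_0$, so by Lemma~\ref{lem:comm_stab} and Proposition~\ref{prop:CScore} the essential cores $\widehat{Z_m}$ stabilise up to parallelism for $m\ge\ell_0$. The hard part — and exactly the place where the factor-system proof used finite multiplicity outright, and where the essential index condition would otherwise be invoked (to upgrade ``commensurable'' to ``equal'' as in Claim~\ref{claim:NICC}) — is to turn this into a contradiction: one must show that the strict inclusions $Z_{m+1}\subsetneq Z_m$ cannot coexist with eventually-parallel essential cores. The point is that the hyperplane deleted at stage $m$ is (a translate of) one of the staircase hyperplanes $V^j_{m(k)}$, i.e. a $U_m$--type hyperplane as in the proof of Proposition~\ref{prop:closed_uncer_complementation}; I would argue, using that such a hyperplane lies in the unbounded direction of a boundary set $\mathcal V_{j'}$ that \emph{does} fully cross the others, that it is essential in $Z_m$, so that the $\widehat{Z_m}$ genuinely change at each stage, contradicting their stabilisation. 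This last geometric step is the crux; it is the residue of the subtlety that leaves the full factor-system conjecture open, and here the NICC is just barely enough to make it run.
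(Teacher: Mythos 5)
Your approach is genuinely different from the paper's, and it has a real gap at the step you yourself flag as ``the crux.'' The paper's proof is substantially more direct: it works with the two families $\{H_i\}$, $\{V_j\}$ produced by the invisible simplex, forms the descending chain $F_i=\gate_{H_0}(\gate_{H_1}(\cdots\gate_{H_{i-1}}(H_i)\cdots))$ --- each unbounded because it is crossed by the infinitely many $V_j$ with $i_j\geq i$ --- applies Lemma~\ref{lem:stab_proj} and NICC to stabilise $\stabilizer_G(F_i)$ up to commensurability, and uses Lemma~\ref{lem:comm_stab} together with Proposition~\ref{prop:CScore} to extract a fixed nonempty essential core $\widehat F$ lying in every $F_i$. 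The contradiction is then clean and geometric: the $V_j$ cut $\widehat F$ off in a strictly nested sequence of halfspaces, impossible for a nonempty set. There are no orthogonal complements, no appeal to Lemma~\ref{lem:chain_find}, no essential index condition, and no claim about any particular hyperplane being essential.

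Your route instead builds the ascending chain $W_k=\gate_{\bar V^i_n}(\bar V^i_k)$ from Proposition~\ref{prop:visibility}, asserts compactness of the $W_k$ (which fails: they are crossed by every $\mathcal V_{j'}$ with $j'\neq i$ and are therefore typically unbounded, and the parenthetical hedge is too vague to rescue the step), forms $\orth W_k$, and then routes through Lemma~\ref{lem:chain_find} to produce a translated descending chain $\{Z_m\}$ through a single vertex. This detour is fatal to your endgame: Lemma~\ref{lem:chain_find} constructs its chain by a graph/pigeonhole argument followed by an arbitrary translation into a fixed ball, so the resulting $Z_m$ are decoupled from the staircase hyperplanes $V^j_{m(k)}$, and there is no reason the hyperplane dropped between $Z_m$ and $Z_{m+1}$ is a translate of one of them, let alone essential in $Z_m$. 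Asserting that essentiality is precisely assuming the hard part: the possibility that dropped hyperplanes are \emph{inessential} is the phenomenon the essential index condition is designed to rule out elsewhere in the paper, and it is exactly what the NICC alone does not control. The lesson from the paper's proof is that by keeping the descending chain anchored to the original $\{H_i\}$, the second family $\{V_j\}$ survives to the very end and furnishes the contradiction via halfspace nesting, with no essentiality claim required.
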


\begin{proof}
We adopt the same notation as in the proof of 
Proposition~\ref{prop:visibility}.  As in that proof, if $\simp\cuco X$ 
contains an invisible simplex, then we have two infinite sets 
$\{V_i\}_{i\geq0},\{H_j\}_{j\geq0}$ of hyperplanes with the following 
properties:
\begin{itemize}
     \item for each $i\geq1$, the hyperplane $H_i$ separates  $H_{i-1}$ from 
$H_{i+1}$;
    \item for each $j\geq 1$, the hyperplane $V_j$ separates  $V_{j-1}$ from 
$V_{j+1}$;
    \item there is an increasing sequence $(i_j)$ so that for all $j$, $V_j$ crosses $H_i$ if and only if $i\leq i_j$.
\end{itemize}
This implies that for all $i\ge1$, the subcomplex $F_i=\gate_{H_0}(\gate_{H_1}(\cdots(\gate_{H_{i-1}}(H_i))\cdots))$ is 
unbounded.  Since $\stabilizer_G(F_i)$ acts cocompactly, by 
Proposition~\ref{prop:f_n_properties}, $\stabilizer_G(F_i)$ is 
infinite. By Lemma~\ref{lem:stab_proj}, $\stabilizer_G(F_i)$ is commensurable with $K_i=\bigcap_{j=1}^i \stabilizer_G(H_j)$, and so by NICC, there exists $N$ so that $K_i$ is commensurable with $K_N$ for all $i\ge N$. Thus, after passing to a subsequence, we 
see that for all $i$, the $K_i$--essential core of $F_i$ is a fixed 
nonempty (indeed, unbounded) convex subcomplex $\widehat F$ of $H_0$.

Now, for each $j$, the hyperplane $V_j$ cannot cross $\widehat F$, because 
$\widehat F$ lies in $F_i$ for all $i$, and $V_j$ fails to cross 
$H_i$ for all sufficiently large $i$.  Moreover, this shows that $\widehat F$ 
must lie in the halfspace associated to $V_j$ that contains $V_{j+1}$.  But 
since this holds for all $j$, we have that $\widehat F$ is contained in an 
infinite descending chain of halfspaces, contradicting that $\widehat 
F\neq\emptyset$.
\end{proof}

\bibliography{Cube_Projections}
\bibliographystyle{amsalpha}
\end{document}